\definecolor{inkblue}{RGB}{75,0,130}
\newtheorem{remark}{Remark}[section]
\newtheorem{lemma}{Lemma}[section]
\newtheorem{example}{Example}[section]
\def\D{\mathrm{d}}
\newcommand{\dthe}{\,{\rm d} {\theta}}
\newcommand{\TS}{\Delta t}
\newcommand{\rb}{\right)}
\newcommand{\lb}{\left(}
\newcommand{\ignore}[1]{}
\title{{New Fully Discrete Active Flux Methods with Truly Multi-Dimensional Evolution Operators and WENO Reconstruction}}
\author{Amelie Porfetye\thanks{Heinrich-Heine-University D\"usseldorf, Germany (amelie.porfetye@hhu.de)},~ 
Zhuyan Tang\thanks{Corresponding author: Johannes Gutenberg University Mainz, Germany (zhtang@uni-mainz.de)},~
Shaoshuai Chu\thanks{RWTH Aachen University, Germany (chu@igpm.rwth-aachen.de)},~ 
Christiane Helzel\thanks{Heinrich-Heine-University D\"usseldorf, Germany (christiane.helzel@hhu.de)},~ \\
M\'aria Luk\'a\v{c}ov\'a-Medvid'ov\'a\thanks{Johannes Gutenberg University Mainz, Germany (lukacova@uni-mainz.de)}
}
\date{}
\begin{document}
\allowdisplaybreaks
\maketitle
\footnotetext{The first two authors are co-first authors of this paper, arranged by alphabetical order.}

\begin{abstract}
We propose new fully discrete third-order accurate Active Flux and WENO methods based on truly multidimensional evolution operators for the two-dimensional acoustic equations. Building on the method of bicharacteristics,  several approximate evolution operators are derived that yield an improved  stability of the resulting schemes.  A linear stability analysis is applied to determine the maximal CFL number. The schemes are tested extensively on both continuous and discontinuous problems, confirming their robustness and accurate approximation even on coarse grids.
%
\end{abstract}

\begin{keywords}
Active Flux, evolution operator, linear acoustics, stability analysis, CFL condition, third-order accuracy, CWENO reconstruction.
\end{keywords}

\begin{AMS}
65M08, 65M25, 65M50
\end{AMS}


\section{Introduction}

The Active Flux (AF) method, originally introduced by Eymann and Roe \cite{eymann2011active,eymann2013multidimensional} and Roe et al. \cite{FR2015,article:Roe2017,article:Roe2018,article:Roe2020,article:Roe2021,proc:Roe2025}, is a relatively new variant of finite volume methods for hyperbolic conservation laws. Its key feature is the use of  point value degrees of freedom located along cell boundaries, in addition to cell average values that are commonly used in finite volume methods. This enables the construction of globally continuous piecewise quadratic reconstructions, which can be used to obtain fully discrete
third-order accurate methods with a compact stencil. By contrast, classical finite volume methods, which only use cell averages of conserved quantities as degrees of freedom, require an increased stencil to achieve high-order accuracy. The evolution of the point values is central to the AF method. For scalar conservation
laws, characteristics can be used to compute the evolution of the point values; see, e.g., \cite{eymann2013multidimensional,article:Barsukow2021,article:CCH2023}.
Exact evolution operators are available for the trivial case of advection and the nontrivial case of acoustics in one, two, and three dimensions. More details can be found in \cite{eymann2013multidimensional,FR2015,phd:Barsukow,article:BHKR2019,article:Barsukow2023}. Barsukow et al. \cite{article:BHKR2019,article:Barsukow2023} showed that Cartesian grid AF methods for acoustics, which use the exact evolution operator for evolving the point values, preserve all steady states. 

The development of AF methods for more general hyperbolic problems, where no exact evolution operators are available, is currently a very active field of research. The Euler equations of gas dynamics, as well as the equations of magnetohydrodynamics, are important examples. Eymann and Roe \cite{eymann2013multidimensional} suggested using splitting methods that separately approximate acoustic wave propagation and nonlinear transport. This approach was further developed in the PhD thesis of Fan \cite{PhD:Fan2017}. A related recent contribution of Barsukow \cite{article:Barsukow2025} used a splitting method for updating the point values in the Euler equations.

Intensive recent research activities are devoted to the development of semi-discrete methods that use the degrees of freedom of AF methods. These methods are named generalized AF methods, see  \cite{article:ABK2025,article:DBK2025}, and point-average-moment polynomiAl-interpreted (PAMPA) schemes, see \cite{preprint:ALB2025}. The point and cell average values provide compact stencils for the spatial discretisation. High-order stability-preserving Runge-Kutta methods are used for the temporal discretisation. The method of lines approach also simplifies theoretical studies, as shown in \cite{preprint:Barsukow-PG-2025,preprint:BKLNOR2025,preprint:AOL2025}. However, the temporal discretisation of semi-discrete methods increases the stencil, and the resulting methods have quite restrictive CFL conditions. 

Our own work is devoted to the further development of fully discrete AF methods. Such methods rely on evolution operators for the point value degrees of freedom. Previously, Luk\'a\v{c}ov\'a et al. \cite{article:LMW2000,article:LSW2002} developed evolution operators for linear hyperbolic systems using the Method of Bicharacteristics. These evolution operators are based on integrals along the  base of characteristic cones and thus take all directions of wave propagation into account. The so-called EG2 evolution operator for acoustics \cite{article:LMW2000} and linearized Euler equations \cite{article:LSW2002} is so far the only known third-order accurate approximate evolution operator of this type. In \cite{article:CHL2024}, fully discrete third-order accurate AF methods for acoustics and linearised Euler equations have been presented using the EG2 evolution operator for the point value update. The study of the acoustic equations allows one to compare the resulting AF methods with the presumably optimal method that uses the exact evolution operator.  While the use of the exact evolution operator for acoustics provides AF methods stable for time steps corresponding to a CFL condition of the form $\mbox{CFL}\le 0.5$, i.e., optimal for AF methods with this stencil \cite{article:CHK2021}, the use of the EG2 evolution operator leads to a more restrictive stability condition of the form $\mbox{CFL}\le 0.279$. The same time step restriction was observed for the linearised Euler equations \cite{article:CHL2024}. Early evidence of the EG2 evolution operator for the nonlinear Euler equations appears in \cite{article:CHL2024}. In \cite{preprint:CHP2025}, the nonlinear situation was explored in more detail, including an accuracy study for smooth problems and computations of shock waves that require limiting. All previous studies confirm that fully discrete AF methods yield accurate results even on coarse grids. 

In this paper, we consider the two-dimensional acoustic equations, which are given by
	\begin{equation}
		\label{eq:acoustic}
		\begin{split}
			\partial_t p &+ c \nabla \cdot \mathbf{u}  = 0 \\
			\partial_t \mathbf{u} &+ c \nabla p  = \mathbf{0},
		\end{split}
	\end{equation}
where $\mathbf{u}:\mathbb{R}^2 \times \mathbb{R}^+ \to \mathbb{R}^2$ denotes the velocity, $p: \mathbb{R}^2 \times \mathbb{R}^+ \to \mathbb{R}$ is the pressure, and $c \in \mathbb{R}^+$  represents the speed of sound. In the following, we propose new approximate evolution operators for acoustics that yield third-order accurate AF and improved stability.  To this end, we first derive evolution operators that are exact for quadratic plane waves. This suggests a modification of the EG2 evolution operator by applying the Taylor expansion over mantle integrals and carefully adjusting the integral over the basis of the characteristic cone to yield an exact solution to the wave equation in 1D. While the resulting AF method does not offer improved stability, it yields smaller global errors and a simpler implementation.  In the next step, we propose consistent modifications to the EG2 evolution operator that improve the stability of the AF method. These new operators integrate over several circles of different radii rather than a single one. Here, more efficient implementations can be obtained by replacing the exact integration with quadrature rules. Parameter studies show that versions of these new evolution operators lead to AF methods, which are stable for time steps that satisfy $\mbox{CFL} \le 0.44$, i.e., close to the optimal limit. Finally, we also study the influence of the reconstruction operator on the stability and accuracy of the AF method. We investigate two biquadratic  reconstruction operators using either the AF degrees of freedom or a CWENO reconstruction based on the neighbouring cell averages.
In the latter case, the numerical fluxes are  again computed using Simpson's rule, and the solution at the quadrature nodes is approximated using our different approximate evolution operators. The resulting finite volume methods can be seen as generalised AF methods. They are, by construction, third-order accurate and appear to be stable for all considered approximate EG operators at $\mbox{CFL} = 0.5$. Moreover, by including all six neighbours of the edge midpoints in the approximate evolution, larger time steps corresponding to CFL=0.7 are allowed.

For smooth test problems, the magnitude of the error of the AF method with a CWENO reconstruction (AFCW)  is typically larger than that of the  AF methods that use the classical AF degrees of freedom for the reconstruction. This seems to be due to a larger stencil, which, on the other hand, allows a less restricted stability condition.  Note, however, that for CFL=0.7, the AFCW method with the so-called EG$^{\rm{quad}}$ operator yields errors comparable to those obtained by the AF reconstruction.
While all methods provide accurate approximations of discontinuous solution structures, the AFCW method shows slightly more smearing and a slightly less accurate approximation of the stationary vortex.  

This paper is organised as follows. In Section \ref{sec:2}, we briefly review the two-dimensional AF method together with two existing evolution operators for updating of point values—the exact evolution operator and the EG2 evolution operator;  see, e.g.,\cite{article:CHL2024}. We then introduce a series of test problems that will be tested by different versions of AF methods. In Section \ref{sec:EGquadra}, we introduce a new evolution operator that exactly reproduces planar waves for quadratic initial data. We show that this operator is third-order accurate. These accuracy studies motivate a simplified implementation that replaces integration with simple quadrature formulas. In Section \ref{sec:EG2delta} we introduce another new approximate evolution operator, which modifies the EG2 operator in such a way that the stability of the resulting AF method is improved. We show that this operator is third-order accurate  and that stable methods are obtained for time steps which satisfy $\mbox{CFL} \le 0.44$. In Section \ref{sec:CWENO}, we explore the influence of the spatial reconstruction by constructing fully discrete AFCW methods, i.e. the AF methods with a CWENO reconstruction. While all methods are  third-order accurate, the AFCW methods are more sensitive to the choice of the approximate evolution operator and the newly proposed EG$^{\rm quad}$ yields the most accurate results.

\section{Fully discrete Cartesian Grid Active Flux methods}\label{sec:2}

Let us divide a two-dimensional computational domain $\Omega$ into uniform rectangular cells
\begin{equation*}
\Omega_{i,j}:=[x_{i-\frac{1}{2}},x_{i+ \frac{1}{2}}]\times[y_{j-\frac{1}{2}}, y_{j+ \frac{1}{2}}]
\end{equation*}
centered at $\lb x_i,y_j\rb =\lb \lb x_{i-\frac12}+x_{i+\frac12}\rb /2,\ \lb y_{j-\frac12}+y_{j+\frac12}\rb /2\rb $, with $x_{i+\frac12}-x_{i-\frac12}\equiv\Delta x$ and $y_{j+\frac12}-y_{j-\frac12}\equiv\Delta y$ for all $i,j$. Denote by $Q_{i,j}\lb t\rb $ the cell average of $Q\lb \cdot,\cdot,t\rb $ over $\Omega_{i,j}$
\begin{equation*}
Q_{i,j}\lb t\rb \;\approx\;\frac{1}{\Delta x\,\Delta y}\int_{\Omega_{i,j}} Q\lb x,y,t\rb \,\mathrm{d}x\,\mathrm{d}y,
\end{equation*}
and suppose that all $Q_{i,j}\lb t\rb $ are available at a given time level $t=t^n\ge0$.
A finite volume method can be written as follows
\begin{equation}\label{eqn:fv}
Q_{i,j}^{n+1} = Q_{i,j}^n - \frac{\Delta t}{\Delta x} \left(
  F_{i+\frac{1}{2},j}-F_{i-\frac{1}{2},j} \right) - \frac{\Delta
  t}{\Delta y} \left( G_{i,j+\frac{1}{2}}-G_{i,j-\frac{1}{2}}\right),
\end{equation}
with numerical fluxes that approximate two-dimensional integrals, i.e.
\begin{equation}\label{eqn:FGintegrals}
  \begin{aligned}
F_{i-\frac{1}{2},j} & \approx \frac{1}{\Delta t \Delta y}
\int_{t_n}^{t_{n+1}} \int_{y_{j-\frac{1}{2}}}^{y_{j+\frac{1}{2}}}
f( q( x_{i-\frac{1}{2}},y,t))  {\rm d}y {\rm d}t,\\
G_{i,j-\frac{1}{2}} & \approx \frac{1}{\Delta t \Delta x}
\int_{t_n}^{t_{n+1}} \int_{x_{i-\frac{1}{2}}}^{x_{i+\frac{1}{2}}}
g(q(x,y_{j-\frac{1}{2}},t ))  {\rm d} x {\rm d}t.
\end{aligned}
\end{equation}

Numerical methods typically require a {\em reconstruction}, an {\em evolution} and an {\em averaging} step. The averaging step computes cellaverage values at the new time level. It is implemented via the finite volume update in (\ref{eqn:fv}) and a choice of quadrature rules for approximating the integrals in (\ref{eqn:FGintegrals}). In AF methods, the most common choice of the quadrature rule is Simpson's rule, which leads to fluxes of the form
\begin{equation*}\label{eqn:simpson}
\begin{aligned}
F_{i-\frac{1}{2},j}
&= \frac{1}{36}\Big[
 f\big(q(x_{i-\frac{1}{2}},y_{j-\frac{1}{2}},t_n)\big)
+ 4\,f\big(q(x_{i-\frac{1}{2}},y_{j},t_n)\big)
+ f\big(q(x_{i-\frac{1}{2}},y_{j+\frac{1}{2}},t_n)\big) \\
&+ 4\,f\big(q(x_{i-\frac{1}{2}},y_{j-\frac{1}{2}},t_{n+\frac{1}{2}})\big)
+ 16\,f\big(q(x_{i-\frac{1}{2}},y_{j},t_{n+\frac{1}{2}})\big)
+ 4\,f\big(q(x_{i-\frac{1}{2}},y_{j+\frac{1}{2}},t_{n+\frac{1}{2}})\big) \\
&+ f\big(q(x_{i-\frac{1}{2}},y_{j-\frac{1}{2}},t_{n+1})\big)
+ 4\,f\big(q(x_{i-\frac{1}{2}},y_{j},t_{n+1})\big)
+ f\big(q(x_{i-\frac{1}{2}},y_{j+\frac{1}{2}},t_{n+1})\big)
\Big],
\end{aligned}
\end{equation*}
and analogously for $G_{i,j-\frac{1}{2}}$.

For the reconstruction, we use the globally continuous, piecewise quadratic Cartesian grid AF reconstruction.
On each cell we introduce reference coordinates $\lb \xi,\eta\rb \in[-1,1]^2$ via the affine mapping
$$
\xi = \frac{2\lb x - x_i\rb }{\Delta x}, \quad \eta = \frac{2\lb y - y_j\rb }{\Delta y}.
$$
We reconstruct a local quadratic polynomial on $\Omega_{i,j}$ at time $t^n$ in a modal basis,
\begin{equation}\label{recon-formula}
q_{i,j}\lb \xi,\eta\rb =\sum\limits^{m}_{k=0}C_kN_k\lb \xi,\eta\rb.
\end{equation}
For AF reconstruction, $m=8$ is chosen, the coefficients $C_k$ and basis functions $N_k$ are given in \cite{article:CHK2021}.

The AF method requires approximations of point values along the grid cell boundary at times $t_{n+\frac{1}{2}}$ and $t_{n+1}$. Their approximations are crucial parts of fully discrete AF methods. In this paper, we use truly multi-dimensional evolution operators to approximate these point values. In particular, we will derive new evolution operators for the acoustic equations and compare the resulting AF method with previously proposed methods.

\subsection{Previously proposed truly multi-dimensional evolution operators }
\subsubsection{Exact evolution operator}
Eymann and Roe \cite{eymann2013multidimensional}, Fan and Roe \cite{FR2015} and Barsukow et al.\ \cite{article:BHKR2019} derived the exact evolution formulas for point values. The formulas, expressed using derivatives in radial direction $r$, have the form
\begin{equation*}
  \begin{aligned}
    p\lb {\bf x},t\rb  & = \partial_r \lb r M_r \{p\lb {\mathbf x},0\rb  \} \rb |_{r=c t} -
      \frac{1}{c t} \partial_r \lb r^2 M_r \{ {\mathbf n} \cdot
        \mathbf{u}\lb {\mathbf x},0\rb   \} \rb |_{r=c t} \\
      \mathbf{u}\lb {\mathbf x},t\rb  & = \mathbf{u}( {\mathbf x},0(  - \frac{1}{c t} \partial_r
      ( r^2 M_r \{  \mathbf{n} p( {\mathbf x},0)  \} ) |_{r = ct}
      \\
      & \qquad + \int_0^{c
          t} \frac{1}{r} \partial_r ( \frac{1}{r} \partial_r
          ( r^3 M_r \{ ( {\mathbf n} \cdot {\mathbf u}( {\mathbf x},0)  )  {\mathbf n}
           ( - r M_r \{ {\mathbf u}( {\mathbf x},0)  \} ) \, \D r,
  \end{aligned}
\end{equation*}
where $M_r \{f\lb {\mathbf x}\rb  \}$ describes the spherical mean of a scalar function $f$ over a disc with radius $r$ and can be computed via 
\begin{equation*}
M_r \{f\lb {\mathbf x}\rb  \} := \frac{1}{2 \pi r} \int_{0}^{2 \pi } \int_{0}^{r} f\lb x + \tilde{r} \cos \lb \theta\rb , y + \tilde{r} \sin \lb \theta\rb \rb  \frac{\tilde{r}}{\sqrt{r^2 - \tilde{r}^2}} \D \tilde{r} \D \theta.
\end{equation*}
The resulting AF method based on the globally continuous piecewise quadratic reconstruction is third-order accurate. Note that the accuracy is limited by the reconstruction. The flux computation using Simpson's rule would allow fourth-order accuracy. It was shown in \cite{article:CHK2021}, that the resulting method is stable for time steps that satisfy $\mbox{CFL} \le 0.5$, with 
\begin{equation*}
\mbox{CFL}  = \max \left\{\frac{c \Delta t}{\Delta x}, \frac{c \Delta t}{\Delta y}\right\}.
\end{equation*}
This is optimal for a third-order accurate method that uses a compact stencil. Barsukow et al. \cite{article:BHKR2019} showed that the resulting AF method preserves all steady states. Thus, the method in particular preserves vorticity.

\subsubsection{EG2 approximate evolution operator}
Luk\'a\v{c}ov\'a et al.\ \cite{article:LMW2000} derived an exact and several approximate evolution operators for acoustics using the method of bicharacteristics. They derived an exact and approximate expression for the solutions $p, u, v$ at a given point $\lb x,y\rb $ at time $t+\Delta t$ that can be computed from a known representation of the solution in the neighbourhood of the point $\lb x,y\rb $ at time $t$. The exact operator reads
	\begin{equation}\label{eq:exactEvoBic}
	\begin{split}
		p\lb P\rb  &= \frac{1}{2 \pi} \int_{0}^{2 \pi} \lb p\lb \mathbf{Q}\lb \theta\rb \rb  - u\lb \mathbf{Q}\lb \theta\rb \rb \cos \lb \theta\rb  -  v\lb \mathbf{Q}\lb \theta\rb \rb \sin \lb \theta\rb  \rb \D 	\theta \\
		&-  \frac{1}{2 \pi } \int_{t}^{t + \Delta t} \int_{0}^{2 \pi}S\lb \tilde{t},\theta\rb \D \theta \D \tilde{t},\\
		u \lb P\rb  &= \frac{1}{2 \pi} \int_{0}^{2 \pi} \lb p\lb \mathbf{Q}\lb \theta\rb \rb \cos \lb \theta\rb  + u\lb \mathbf{Q}\lb \theta\rb \rb \cos ^2 \lb \theta\rb + v\lb \mathbf{Q}\lb \theta\rb \rb \sin\lb \theta\rb  \cos \lb \theta\rb \rb \D \theta \\
		& + \frac{1}{2} u\lb P'\rb  + \frac{1}{2 \pi } \int_{t}^{t + \Delta t} \int_{0}^{2 \pi}S\lb \tilde{t},\theta\rb  \cos \lb \theta\rb \D \theta \D \tilde{t} \\
		& - \frac{1}{2} c \int_{t}^{t + \Delta t} p_x\lb  P\lb \tilde{t}\rb \rb  \D \tilde{t}, \\
		v \lb P\rb  &= \frac{1}{2 \pi} \int_{0}^{2 \pi} \lb p\lb \mathbf{Q}\lb \theta\rb \rb \sin \lb \theta\rb  + v\lb \mathbf{Q}\lb \theta\rb \rb \sin ^2 \lb \theta\rb  +u\lb \mathbf{Q}\lb \theta\rb \rb \sin\lb \theta\rb  \cos \lb \theta\rb \rb \D \theta \\
		& + \frac{1}{2} v\lb P'\rb  + \frac{1}{2 \pi } \int_{t}^{t + \Delta t} \int_{0}^{2 \pi}S\lb \tilde{t},\theta\rb  \sin \lb \theta\rb \D \theta \D \tilde{t} \\
		& - \frac{1}{2} c \int_{t}^{t + \Delta t} p_y\lb  P\lb \tilde{t}\rb \rb  \D \tilde{t}, 
	\end{split}
\end{equation}
where 
$P = \lb x,y,t+\Delta t\rb $, $P' = \lb x,y,t\rb $ , $\mathbf{Q}\lb \theta\rb =\lb x+c\Delta t
\cos\lb \theta\rb ,y+c\Delta t \sin\lb \theta\rb ,t\rb$, and
\begin{equation*}
		S\lb \tilde{t},\theta\rb  = c \lb u_x\lb \tilde{x},\tilde{y},\tilde{t}\rb \sin ^2\lb \theta\rb  - \lb u_y\lb \tilde{x},\tilde{y},\tilde{t}\rb +v_x\lb \tilde{x},\tilde{y},\tilde{t}\rb \rb \sin \lb \theta\rb  \cos \lb \theta\rb  + v_y\lb \tilde{x},\tilde{y},\tilde{t}\rb \cos ^2 \lb \theta\rb  \rb
\end{equation*}
with 
\begin{equation*}
		\lb \tilde{x},\tilde{y}\rb  = \lb  x + c\lb t + \Delta t -\tilde{t}\rb \cos \lb \theta\rb ,  y + c\lb t + \Delta t -\tilde{t}\rb \sin \lb \theta\rb \rb .
\end{equation*}

By combining exact and approximate steps, they derived a third-order evolution operator. An exact step is given by 
\begin{equation} \label{eq:integration}
	\begin{split}
		u\lb P\rb  &= u\lb P'\rb  - c \int_{t}^{t + \Delta t} p_x\lb x,y,\tilde{t}\rb  \,\D \tilde{t},\\
		v\lb P\rb  &= v\lb P'\rb  - c \int_{t}^{t + \Delta t} p_y\lb x,y,\tilde{t}\rb  \,\D \tilde{t},
	\end{split}
\end{equation}
which follows by integrating the second and third equations of \eqref{eq:acoustic} from $P'$ to $P$. Further details on the derivation of a third-order approximate evolution operator are given in \cite{article:CHL2024} and \cite{article:LMW2000}. The operator is called EG2 and is given as follows
\begin{equation}\label{eq:eg2}
\resizebox{0.93\linewidth}{!}{$
  \begin{aligned}
   p\lb P\rb  & = \frac{1}{\pi} \int_0^{2\pi} \lb p\lb \mathbf{Q}\lb \theta\rb \rb  -
      u\lb \mathbf{Q}\lb \theta\rb \rb  \cos\lb \theta\rb  - v\lb \mathbf{Q}\lb \theta\rb \rb  \sin\lb \theta\rb  \rb
     {\rm d} \theta - p\lb P'\rb  + \mathcal{O} \lb \Delta t^3\rb,  \\
    u\lb P\rb  &= \frac{1}{\pi} \int_0^{2\pi} \lb -p\lb \mathbf{Q}\lb \theta\rb \rb 
      \cos\lb \theta\rb  + u\lb \mathbf{Q}\lb \theta\rb \rb  \lb 2 \cos^2\lb \theta\rb  -
        \frac{1}{2} \rb + 2 v\lb \mathbf{Q}\lb \theta\rb \rb  \sin\lb \theta\rb \cos\lb \theta\rb 
    \rb {\rm d} \theta \\&+ \mathcal{O} \lb \Delta t^3\rb, \\
    v\lb P\rb  & =\frac{1}{\pi} \int_0^{2\pi} \lb -p\lb \mathbf{Q}\lb \theta\rb \rb 
      \sin\lb \theta\rb  + 2 u\lb \mathbf{Q}\lb \theta\rb \rb  \sin\lb \theta\rb  \cos\lb \theta\rb  +
      v\lb \mathbf{Q}\lb \theta\rb \rb  \lb 2 \sin^2 \lb \theta\rb  - \frac{1}{2}\rb
    \rb {\rm d} \theta \\&+ \mathcal{O} \lb \Delta t^3\rb .
  \end{aligned}
  $}
\end{equation}

The AF method based on the EG2 approximate  evolution operator has been introduced and studied in \cite{article:CHL2024}. The method is third-order accurate and stable for time steps satisfying $\mbox{CFL} \le 0.279$. Note that the accuracy of the method is limited by the reconstruction as well as the accuracy of the evolution operator. In contrast to the exact evolution operator, the AF method  no longer exactly preserves vorticity.

\subsection{Test problems}
In this section, we define a series of test problems for the acoustic equations and illustrate the performance of the AF method with exact evolution for these problems.

The first problem was proposed by Luk\'a\v{c}ov\'a et al. \cite{article:LMW2000}, providing an exact smooth time periodic solution for the acoustic equations with irrotational initial values, which can be used for numerical convergence studies. Results are given in Tables \ref{Tab:convergenceProblem1exact} and \ref{Tab:convergenceProblem1exactt1}.

\begin{example}\label{ex:1}
We consider exact solutions of the acoustic equations of the form
  \begin{equation*}
    \begin{split}
p\lb x,y,t\rb  & = - \frac{1}{c} \cos\lb 2 \pi c t\rb \lb  \sin\lb 2 \pi x\rb  +
  \sin\lb 2 \pi y\rb \rb, \\
u\lb x,y,t\rb  & = \phantom{-} \frac{1}{c} \sin\lb 2 \pi c t\rb  \cos\lb 2 \pi x\rb,  \\
v\lb x,y,t\rb  & = \phantom{-} \frac{1}{c} \sin\lb 2 \pi c t\rb  \cos\lb 2 \pi y\rb.
    \end{split}
  \end{equation*}
Let the computation domain be $[-1,1]\times[-1,1]$, with periodic boundary conditions imposed in both the $x$- and $y$-directions. The solution at time $t=0$ is used as the initial data.
\end{example}

\begin{table}[!ht]
	\centering
	\caption{Errors measured in the $L_1$-norm and EOC for Example~\ref{ex:1} 
		using exact evolution with $\mathrm{CFL} = 0.5$ at $t = 0.1$.}
	\label{Tab:convergenceProblem1exact}
	\vspace*{0.15cm}
	
	\sisetup{
		scientific-notation = true,
		round-mode = places
	}
	
	\renewcommand{\arraystretch}{0.9}
	\setlength{\tabcolsep}{4pt}
	
	\begin{tabular}{
			c
			*{2}{S[round-precision=2, table-format=1.2e-2]}
			*{2}{S[round-precision=3, table-format=1.4]}
		}
		\toprule
		Res. & \multicolumn{2}{c}{Error} & \multicolumn{2}{c}{EOC} \\
		\cmidrule(lr){2-3} \cmidrule(lr){4-5}
		& {$p$} & {$u,v$} & {$p$} & {$u,v$} \\
		\midrule
		$64\times64$  & \num{1.6042545843454000e-05} & \num{9.3133794254334842e-06} & {---} & {---} \\
		$128\times128$ & \num{2.0386219828327593e-06} & \num{1.1726376779459001e-06} & 2.9762 & 2.9895 \\
		$256\times256$ & \num{2.5426897907752349e-07} & \num{1.4564171803536017e-07} & 3.0032 & 3.0093 \\
		\bottomrule
	\end{tabular}
\end{table}

\begin{table}[!ht]
	\centering
	\caption{Errors measured in the $L_1$-norm and EOC for Example~\ref{ex:1} 
		using exact evolution with $\mathrm{CFL} = 0.5$ at $t = 1$.}
	\label{Tab:convergenceProblem1exactt1}
	\vspace*{0.15cm}
	
	\sisetup{
		scientific-notation = true,
		round-mode = places
	}
	
	\renewcommand{\arraystretch}{0.9}
	\setlength{\tabcolsep}{4pt}
	
	\begin{tabular}{
			c
			*{2}{S[round-precision=2, table-format=1.2e-2]}
			*{2}{S[round-precision=3, table-format=1.4]}
		}
		\toprule
		Res. & \multicolumn{2}{c}{Error} & \multicolumn{2}{c}{EOC} \\
		\cmidrule(lr){2-3} \cmidrule(lr){4-5}
		& {$p$} & {$u,v$} & {$p$} & {$u,v$} \\
		\midrule
		$64\times64$   & \num{1.9950350026772478e-04} & \num{2.0145858664183745e-06} & {---} & {---} \\
		$128\times128$ & \num{2.5059362405697071e-05} & \num{1.2735804246916971e-07} & 2.9930 & 3.9835 \\
		$256\times256$ & \num{3.1362045778548605e-06} & \num{7.9895233839950322e-09} & 2.9983 & 3.9946 \\
		\bottomrule
	\end{tabular}
\end{table}

The second problem was proposed by Chudzik et al. \cite{article:CHL2024}. Similar to Example \ref{ex:1}, the test consists of an exact smooth time-periodic solution of the acoustic equations but with non-irrotational data. The results are given in Tables \ref{Tab:convergenceProblem2exact} and \ref{Tab:convergenceProblem2exactt1}.
\begin{example}\label{ex:2}
We consider exact solutions of the acoustic equations of the form
\begin{equation*}
  \begin{split}
p\lb x,y,t\rb  & = \phantom{-} \frac{1}{c} \lb \cos\lb 2 \pi x\rb  - \cos\lb 2 \pi y\rb  \rb
\sin\lb 2 \pi c t\rb  \\
u\lb x,y,t\rb  & = - \frac{1}{c} \lb \sin\lb 2 \pi x\rb  \cos\lb 2 \pi c t\rb  +
  \sin\lb 2 \pi y\rb  \rb \\
v\lb x,y,t\rb  & = \phantom{-} \frac{1}{c} \lb \sin\lb 2 \pi x\rb  + \sin\lb 2 \pi y\rb  \cos\lb 2
  \pi c t\rb  \rb.
  \end{split}
\end{equation*}
Test computations are performed on the interval $[-1,1]\times [-1,1]$ with periodic boundary conditions in $x$- and $y$-directions, using the solution at time $t=0$ as initial values. 
\end{example}

\begin{table}[!ht]
	\centering
	\caption{Errors measured in the $L_1$-norm and EOC for Example~\ref{ex:2} 
		using exact evolution with $\mathrm{CFL} = 0.5$ at $t = 0.1$.}
	\label{Tab:convergenceProblem2exact}
	\vspace*{0.15cm}
	
	\sisetup{
		scientific-notation = true,
		round-mode = places
	}
	
	\renewcommand{\arraystretch}{0.9}
	\setlength{\tabcolsep}{4pt}
	
	\begin{tabular}{
			c
			*{2}{S[round-precision=2, table-format=1.2e-2]}
			*{2}{S[round-precision=3, table-format=1.4]}
		}
		\toprule
		Res. & \multicolumn{2}{c}{Error} & \multicolumn{2}{c}{EOC} \\
		\cmidrule(lr){2-3} \cmidrule(lr){4-5}
		& {$p$} & {$u,v$} & {$p$} & {$u,v$} \\
		\midrule
		$64\times64$   & \num{1.1963386887980978e-05} & \num{1.2652061957520213e-05} & {---} & {---} \\
		$128\times128$ & \num{1.5022014177906815e-06} & \num{1.6021279860692110e-06} & 2.9935 & 2.9813 \\
		$256\times256$ & \num{1.8610048114829737e-07} & \num{1.9969677502133022e-07} & 3.0129 & 3.0041 \\
		\bottomrule
	\end{tabular}
\end{table}

\begin{table}[!ht]
	\centering
	\caption{Errors measured in the $L_1$-norm and EOC for Example~\ref{ex:2} 
		using exact evolution with $\mathrm{CFL} = 0.5$ at $t = 1$.}
	\label{Tab:convergenceProblem2exactt1}
	\vspace*{0.15cm}
	
	\sisetup{
		scientific-notation = true,
		round-mode = places
	}
	
	\renewcommand{\arraystretch}{0.9}
	\setlength{\tabcolsep}{4pt}
	
	\begin{tabular}{
			c
			*{2}{S[round-precision=2, table-format=1.2e-2]}
			*{2}{S[round-precision=3, table-format=1.4]}
		}
		\toprule
		Res. & \multicolumn{2}{c}{Error} & \multicolumn{2}{c}{EOC} \\
		\cmidrule(lr){2-3} \cmidrule(lr){4-5}
		& {$p$} & {$u,v$} & {$p$} & {$u,v$} \\
		\midrule
		$64\times64$   & \num{2.4980281566831169e-06} & \num{1.5719665415403451e-04} & {---} & {---} \\
		$128\times128$ & \num{1.6016748496368367e-07} & \num{1.9697610150159046e-05} & 3.9631 & 2.9965 \\
		$256\times256$ & \num{1.0112398678926602e-08} & \num{2.4636731237458242e-06} & 3.9854 & 2.9991 \\
		\bottomrule
	\end{tabular}
\end{table}

The next test problem was introduced by Barsukow et al. \cite{article:BHKR2019} to study the ability of methods to preserve a stationary vortex for acoustics. It is motivated by the well-known Gresho vortex problem for the  Euler equations.
\begin{example}\label{ex:3}
We approximate solutions of the two-dimensional acoustic equations with initial values of the form
\begin{equation*}
p\lb r,0\rb  = 0, \quad \mathbf{u}\lb x,y,0\rb  = \mathbf{n} \left\{
  \begin{array}{ccc}
5 r & : & 0 \le r \le 0.2 \\
2-5 r & : & 0.2 < r \le 0.4 \\
0 & : & r  > 0.4,\end{array}\right.
\end{equation*}
with $r = \sqrt{x^2 + y^2}$, $\mathbf{n} =\lb -\sin\lb \theta\rb ,\cos\lb \theta\rb \rb ^T$, $\theta \in [0,2\pi) $ and $\mathbf{u}= \lb u,v\rb ^T$. The computational domain is $[-1,1]\times [-1,1]$, and double periodic boundary conditions are imposed.
\end{example}
Barsukow et al.\ \cite{article:BHKR2019} showed that the third-order accurate Cartesian grid AF method with an exact evolution operator is stationary preserving, i.e.,\ it exactly preserves a discrete representation of all stationary states. In order to test how well different methods  approximate the steady state, they proposed to compute solutions at time $t=100$. The left column of  Figure \ref{fig:EG2_EG11_Vortex} shows numerical solutions on grids with  $64 \times 64$ and $128\times 128$ cells that conform to the theoretical result.

In the final test problem, we study the performance of the method for the approximation of discontinuous solution structures. This problem was also considered in \cite{article:LMW2000,article:CHL2024}.
\begin{example}\label{ex:4}
We consider the two-dimensional acoustic equations with initial values of the form
  \begin{equation*}
    \begin{split}    
u_0\lb x,y\rb &=v_0\lb x,y\rb =\begin{cases}
\phantom{-}\frac{1}{\sqrt{2}}&: |y|<|x| \\ -\frac{1}{\sqrt{2}}&: |y|\ge|x| 
\end{cases}\\
p_0\lb x,y\rb &=1
\end{split}
\end{equation*}
The speed of sound is set to $c=1$ and the computational domain is $[-1,1]\times [-1,1]$. We use zero-order extrapolation at the boundaries, which takes the wave propagation in the diagonal direction into account, e.g., ghost cells at the left boundary $\mathcal{C}_{0,j}$ are filled with cells $\mathcal{C}_{1,j-1}$ if they are not close to the bottom
left corner; otherwise, they are filled with cells $\mathcal{C}_{1,j+1}$.  
\end{example}

The top row of Figure \ref{fig:EG2cDiscontinuous} shows approximate solutions of Example \ref{ex:4} at $t=0.5$ calculated on grids with $64 \times 64$ and $128 \times 128$ grid cells. 

In the remaining sections, we introduce new third-order accurate evolution operators and investigate their stability and performance for the test problems.

\section{New Evolution Operator Reproducing Exactly Quadratic Plane Waves}\label{sec:EGquadra}

Inspired by the previous work of Luk\'a\v{c}ov\'a \cite{article:LMW2004}, where stable approximate evolution operators for the FVEG method were derived, we apply a similar strategy here to improve the accuracy and stability of the AF method. In this section, we introduce a new evolution operator that is derived based on the exact solution for one-dimensional quadratic initial data. Motivated by the quadratic continuous reconstruction employed in our scheme, we aim to construct an evolution operator that can naturally accommodate and accurately evolve quadratic terms. The use of this operator leads to a third-order fully discrete AF scheme, which allows for time steps comparable to those employed in the method using EG2 as the evolution operator.

We consider one-dimensional data of the form
\begin{equation*}
\begin{aligned}
		p\lb x,y,0\rb &=
		\begin{cases}
			p^R x^2, & x>0,\\
			0, & x\le 0,
		\end{cases}
		&\qquad
		u\lb x,y,0\rb &=
		\begin{cases}
			u^R x^2, & x>0,\\
			0, & x\le 0,
		\end{cases}
		&\qquad
		v\lb x,y,0\rb =0.
\end{aligned}
\end{equation*}
For simplicity, we have taken the left state to be zero. Note that for the linear acoustic equation system, the superposition principle holds, and a more general piecewise quadratic solution can easily be deduced. The exact solution is given by
\begin{equation}\label{exact-solution-q}
\begin{aligned}
		p\lb x,y,t\rb &=
		\begin{cases}
			p^R\lb x^2+c^2t^2\rb -2u^R x c t, & x>ct,\\
			\dfrac{1}{2}\lb p^R - u^R\rb \lb x+ct\rb ^2, & -ct< x\le ct,\\
			0, & x<-ct,
		\end{cases}\\[4pt]
		u\lb x,y,t\rb &=
		\begin{cases}
			u^R\lb x^2+c^2t^2\rb -2p^R x c t, & x>ct,\\
			\dfrac{1}{2}\lb u^R - p^R\rb \lb x+ct\rb ^2, & -ct< x\le ct,\\
			0, & x<-ct,
		\end{cases}\\[4pt]
		v\lb x,y,t\rb &=0.
\end{aligned}
\end{equation}

Substituting these data into the mantle integrals \eqref{eq:exactEvoBic} leads to an approximate evolution operator, called $\mathrm{EG}^{\text{quad}}$.

More precisely, we modify the EG2 operator such that its numerical solution over one time step remains consistent with the exact solution \eqref{exact-solution-q}. 
Direct calculations in the first equation of the approximate evolution operator EG2 \eqref{eq:eg2} give

\begin{align*}
	\frac1\pi\int^{2\pi}_0p\lb \mathbf{Q}\lb \theta\rb \rb \dthe=&\frac12\lb c\TS\rb ^2p^R, \\
	\frac1\pi\int^{2\pi}_0u\lb \mathbf{Q}\lb \theta\rb \rb \cos\lb \theta\rb \dthe=&\frac4{3\pi}\lb c\TS\rb ^2u^R,\\
	\frac1\pi\int^{2\pi}_0v\lb \mathbf{Q}\lb \theta\rb \rb \sin\lb \theta\rb \dthe=&\frac4{3\pi}\lb c\TS\rb ^2v^R.
\end{align*}

In order to preserve an exact planar wave solution, we derive a suitable approximation justified by  
Lemma \ref{lem} with $n=4$. This yields
\begin{align*}
\int^{2\pi}_0u\lb \mathbf{Q}\lb \theta\rb \rb \cos\lb \theta\rb \dthe = &\frac{\pi}{2}\left[u\lb \mathbf{Q}\lb 0\rb \rb -u\lb \mathbf{Q}\lb \pi\rb \rb \right] + \mathcal{O}\lb \TS^3\rb ,\\
\int^{2\pi}_0v\lb \mathbf{Q}\lb \theta\rb \rb \sin\lb \theta\rb \dthe =  &\frac{\pi}{2}\left[v\lb \mathbf{Q}\lb \frac{\pi}{2}\rb \rb -v\lb \mathbf{Q}\lb \frac{3\pi}{2}\rb \rb \right]+ \mathcal{O}\lb \TS^3\rb .
\end{align*}
Next, we rewrite the corresponding terms in the EG2 operator to achieve the exact solution after one time step for one-dimensional quadratic data. Specifically, we have for $\omega_1, \omega_2 \in \mathbb{R}$
\begin{align*}		&\frac{1}{\pi}\int^{2\pi}_0u\lb \mathbf{Q}\lb \theta\rb \rb \cos\lb \theta\rb \dthe\\
=&\omega_1\int^{2\pi}_0u\lb  \mathbf{Q}\lb \theta\rb \rb \cos\lb \theta\rb \dthe+\lb \frac{1}{\pi}-\omega_1\rb \int^{2\pi}_0u\lb \mathbf{Q}\lb \theta\rb \rb \cos\lb \theta\rb \dthe\\
			=&\omega_1\int^{2\pi}_0u\lb \mathbf{Q}\lb \theta\rb \rb \cos\lb \theta\rb \dthe+\lb \frac{1}{\pi}-\omega_1\rb \frac{\pi}{2}[u\lb \mathbf{Q}\lb 0\rb \rb -u\lb \mathbf{Q}\lb \pi\rb \rb ]+\mathcal{O}\lb \TS^3\rb \\
			=&\frac{4}{3}\omega_1 u^R\lb c\TS\rb ^2+\lb \frac12-\frac{\pi}{3}\omega_1\rb  u^R\lb c\TS\rb ^2 +\mathcal{O}\lb \TS^3\rb \\
			=&\lb \frac{4}{3}\omega_1-\lb \frac12-\frac{\pi}{2}\omega_1\rb \rb u^R\lb c\TS\rb ^2 +\mathcal{O}\lb \TS^3\rb , 
\end{align*} 
and
\begin{align*}		&\frac{1}{\pi}\int^{2\pi}_0v\lb \mathbf{Q}\lb \theta\rb \rb \sin\lb \theta\rb \dthe\\
=&\omega_2\int^{2\pi}_0v\lb \mathbf{Q}\lb \theta\rb \rb \sin\lb \theta\rb \dthe+\lb \frac{1}{\pi}-\omega_2\rb \int^{2\pi}_0v\lb \mathbf{Q}\lb \theta\rb \rb \sin\lb \theta\rb \dthe\\
			=&\omega_2\int^{2\pi}_0v\lb \mathbf{Q}\lb \theta\rb \rb \sin\lb \theta\rb \dthe+\lb \frac{1}{\pi}-\omega_2\rb \frac{\pi}{2}\left[v\lb \mathbf{Q}\lb \frac{\pi}{2}\rb \rb -v\lb \mathbf{Q}\lb \frac{3\pi}{2}\rb \rb \right]+\mathcal{O}\lb \TS^3\rb \\
			=&\frac{4}{3}\omega_2 v^R\lb c\TS\rb ^2+\lb \frac12-\frac{\pi}{2}\omega_2\rb  v^R\lb c\TS\rb ^2 +\mathcal{O}\lb \TS^3\rb \\
			=&\lb \frac{4}{3}\omega_2-\lb \frac12-\frac{\pi}{2}\omega_2\rb \rb v^R\lb c\TS\rb ^2 +\mathcal{O}\lb \TS^3\rb . 
\end{align*} 
The choice of $\omega_1 = 0$ and $\omega_2 = 0$ leads to 
\begin{align*}
\frac{1}{\pi}\int^{2\pi}_0u\lb \mathbf{Q}\rb \cos\theta\dthe&= \frac12u^R\lb c\TS\rb ^2+\mathcal{O}\lb \TS^3\rb , \\
\frac{1}{\pi}\int^{2\pi}_0v\lb \mathbf{Q}\rb \sin\theta\dthe&=\frac12v^R\lb c\TS\rb ^2+\mathcal{O}\lb \TS^3\rb .
\end{align*}
Substituting the above results in \eqref{eq:eg2}, one can obtain $\mathrm{EG}^{\text{quad}}$ \eqref{eq:phiP}. An analogous procedure leads directly to \eqref{eq:uP}, \eqref{eq:vP}.

\begin{align}
p(P)  =& -p \big( P' \big) + \frac{1}{\pi}\int_{0}^{2\pi}p(\mathbf{Q}(\theta)) \,\mathrm{d}\theta - \frac{1}{2}\left[u(\mathbf{Q}(0))  - u(\mathbf{Q}(\pi)) \right] \notag\\
		 &- \frac{1}{2}\left[v\big(\mathbf{Q}\left(\tfrac{\pi}{2}\right)\big)  - v\big(\mathbf{Q}\left( \tfrac{3\pi}{2}\right)\big) \right]+ \mathcal{O}(\Delta t^{3}), \label{eq:phiP} \\
u(P)     =& -\frac{1}{2}\left[p(\mathbf{Q}(0)) - p(\mathbf{Q}(\pi)) \right]+ \frac{1}{\pi}\int_{0}^{2\pi}[u(Q(\theta)) \left( 2\cos^{2}( \theta)  - \tfrac{1}{2}\right) \notag\\
		 &+ 2v(\mathbf{Q}(\theta)) \sin(\theta) \cos(\theta)]\,\mathrm{d}\theta+ \mathcal{O}(\Delta t^{3}), \label{eq:uP} \\
v\lb P\rb     =& -\frac{1}{2}\left[p\big(\mathbf{Q}\left(\tfrac{\pi}{2}\right)\big)  - p\big(\mathbf{Q}\left(\tfrac{3\pi}{2}\right)\big) \right]+ \frac{1}{\pi}\int_{0}^{2\pi}[v(\mathbf{Q}(\theta)) \left( 2\sin^{2}( \theta)  - \tfrac{1}{2}\right) \notag\\
		 &+ 2u(\mathbf{Q}(\theta)) \sin(\theta) \cos(\theta) ]\,\mathrm{d}\theta+ \mathcal{O}(\Delta t^{3}). \label{eq:vP}
\end{align}

Numerical results indicate that the AF method based on the $\mathrm{EG}^{\text{quad}}$ operator \eqref{eq:phiP}-\eqref{eq:vP} remains stable under the same CFL condition as the EG2 operator. In Sections \ref{sec:accuracy}, \ref{sec:vortex}, and \ref{sec:disc}, we will investigate the numerical performance of this new evolution operator and compare it with new operators that permit larger time steps, which will be introduced in the following section.

\section{New Evolution Operator with Increased Stability} \label{sec:EG2delta}
In this section, we derive an evolution operator that increases the stability of the resulting AF method. This operator is derived from the EG2 formulas with a slight modification that does not affect the order of accuracy but leads to a method that is stable even for much larger time steps. To derive the new evolution operator for $p$, we use the existing one from the EG2 operator. The first approximation in equation (\ref{eq:eg2}) depends on the value of a point at the previous time level. This point value is replaced by a third-order approximation. A suitable approximation can be found using Lemma  \ref{th:approximation_of_pv}.
\begin{lemma}\label{th:approximation_of_pv}
	Let $ f \in C^3\lb \Omega\rb $, where $ \Omega \subset \mathbb{R} ^2$ and that contains the closed disk $\overline{B_R\lb x_0,y_0\rb }$. For $R>0$,  define the circle parametrisation $\mathbf{Q}_R\lb \theta\rb =\lb x_0+R\cos\lb \theta\rb , y_0+R\sin\lb \theta\rb \rb $, $\theta \in [0, 2\pi]$. Then 
	\begin{equation*}
		f\lb x_0,y_0\rb  = \frac{1}{3}\lb\frac{4}{2\pi}\int_{0}^{2 \pi} f\lb \mathbf{Q}_{\frac{R}{2}}\lb \theta\rb \rb  {\rm d} \theta - \frac{1}{2\pi}\int_{0}^{2 \pi} f\lb \mathbf{Q}_{R}\lb \theta\rb \rb  {\rm d} \theta \rb + \mathcal{O}\lb R^3\rb ,
	\end{equation*}
	for $ R > 0 $ sufficiently small.
\end{lemma}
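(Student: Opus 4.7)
The plan is a Richardson-extrapolation argument built on a Taylor expansion of $f$ at $(x_0,y_0)$. Since $f\in C^3(\Omega)$, Taylor's theorem gives, uniformly on $\overline{B_R(x_0,y_0)}$,
$$f(x,y) = f(x_0,y_0) + f_x\,(x-x_0) + f_y\,(y-y_0) + \tfrac{1}{2}\bigl[f_{xx}(x-x_0)^2 + 2f_{xy}(x-x_0)(y-y_0) + f_{yy}(y-y_0)^2\bigr] + \mathcal{O}(R^3),$$
with all partial derivatives evaluated at $(x_0,y_0)$. I would then substitute the circle parametrisation $x-x_0 = r\cos\theta$, $y-y_0 = r\sin\theta$ for $r\in\{R/2,R\}$ and introduce the circular mean
$$M(r) := \frac{1}{2\pi}\int_0^{2\pi} f(\mathbf{Q}_r(\theta))\,{\rm d}\theta.$$

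Using the elementary trigonometric identities
$$\int_0^{2\pi}\cos\theta\,{\rm d}\theta = \int_0^{2\pi}\sin\theta\,{\rm d}\theta = \int_0^{2\pi}\sin\theta\cos\theta\,{\rm d}\theta = 0, \qquad \int_0^{2\pi}\cos^2\theta\,{\rm d}\theta = \int_0^{2\pi}\sin^2\theta\,{\rm d}\theta = \pi,$$
term-by-term integration of the Taylor expansion kills the first-order contributions and the mixed second-order term, and produces
$$M(r) = f(x_0,y_0) + \frac{r^2}{4}\,\Delta f(x_0,y_0) + \mathcal{O}(r^3),$$
where $\Delta f = f_{xx}+f_{yy}$.

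Finally, I would specialise to $r=R/2$ and $r=R$ and form the Richardson-style combination that is engineered to cancel the $r^2$ contribution:
$$\frac{1}{3}\bigl(4M(R/2) - M(R)\bigr) = \frac{1}{3}\Bigl(4f(x_0,y_0) + \tfrac{R^2}{4}\Delta f - f(x_0,y_0) - \tfrac{R^2}{4}\Delta f\Bigr) + \mathcal{O}(R^3) = f(x_0,y_0) + \mathcal{O}(R^3),$$
which is exactly the claimed identity once one reinstates the factor $1/(2\pi)$ inside the circular integrals. There is no substantial obstacle; the only item requiring a little care is that the $\mathcal{O}(R^3)$ remainder is uniform in $\theta$, which follows from continuity of the third-order derivatives of $f$ on the compact disc $\overline{B_R(x_0,y_0)}$. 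The structure of the identity is now transparent: the weight $4$ in front of $M(R/2)$ matches the ratio $(R/(R/2))^2=4$ of the leading $r^2$ coefficients so the Laplacian contribution cancels, and the normalising factor $1/3 = 1/(4-1)$ restores $f(x_0,y_0)$ itself.
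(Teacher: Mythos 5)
Your proof is correct and follows essentially the same route as the paper: a second-order Taylor expansion of $f$ at $(x_0,y_0)$, elimination of the first-order terms by the circular integrals, and cancellation of the $R^2$ (Laplacian) contribution through the weighted combination $\tfrac{1}{3}\bigl(4M(R/2)-M(R)\bigr)$. The only difference is bookkeeping — you integrate each circular mean first and then extrapolate, while the paper combines the two integrands before integrating — which does not change the argument.
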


\begin{proof}
	Using the two-dimensional Taylor expansion at $\lb x_0,y_0\rb$, we obtain
	  \begin{equation*}
		\begin{aligned}
		f\lb x_0+R \cos \lb \theta\rb , y_0 +R \sin \lb \theta\rb \rb  = f\lb x_0,y_0\rb  + R \cos \lb \theta\rb  f_x\lb x_0,y_0\rb  +  R \sin \lb \theta\rb  f_y\lb x_0,y_0\rb  \\
		+  \frac{R^2}{2} \lb \cos ^2 \lb\theta\rb f_{xx}\lb x_0,y_0\rb + 2\cos\lb\theta\rb \sin\lb\theta\rb  f_{xy}\lb x_0,y_0\rb + \sin ^2 \lb\theta\rb  f_{yy}\lb x_0,y_0\rb  \rb \\
		+ \mathcal{O} \lb R^3\rb .
		\end{aligned}
	\end{equation*}
	Hence,
		\begin{equation*}
        \resizebox{\linewidth}{!}{$
		\begin{aligned}
		&\frac{1}{2\pi}\int_{0}^{2 \pi}4 f\lb\mathbf{Q}_{\frac{R}{2}}\lb\theta\rb \rb   -  f\lb\mathbf{Q}_{R}\lb\theta\rb \rb  {\rm d} \theta \\ &= \frac{1}{2\pi}\int_{0}^{2 \pi} 3 f\lb x_0,y_0\rb  + R \cos \lb \theta\rb  f_x\lb x_0,y_0\rb +R \sin \lb\theta\rb  f_y\lb x_0,y_0\rb  {\rm d} \theta
		 + \mathcal{O}\lb R^3\rb  \\
		 &=  \frac{1}{2\pi}\lb 3f\lb x_0,y_0\rb  \int_{0}^{2 \pi} 1{\rm d} \theta + R f_x\lb x_0,y_0\rb  \int_{0}^{2 \pi} \cos\lb\theta\rb {\rm d} \theta + R f_y\lb x_0,y_0\rb \int_{0}^{2 \pi} \sin\lb\theta\rb {\rm d} \theta \rb + \mathcal{O}\lb R^3\rb  \\
		 &= 3 f\lb x_0,y_0\rb  + \mathcal{O}\lb R^3\rb.
		\end{aligned}
        $}
	\end{equation*}
	Dividing by three yields the claim.
\end{proof}
\begin{remark}
	If $ f \in C^4 \lb \Omega\rb $ the error term improves from $\mathcal{O}\lb R^3\rb $ to $\mathcal{O}\lb R^4\rb $.
\end{remark}
The combination of Lemma~\ref{th:approximation_of_pv} and EG2 leads to an infinite family of third-order evolution formulas for $p\lb P\rb $ using $R=\delta c \Delta t$ with $\delta \in [0,1]$. It has the form 
\begin{equation}
	\label{eq:eg2_delta}
	\begin{split}
		   p\lb P\rb  = &\frac{1}{\pi} \int_0^{2\pi} \lb p\lb \mathbf{Q}\lb \theta\rb \rb  -
		u\lb \mathbf{Q}\lb \theta\rb \rb  \cos\lb \theta\rb  - v\lb \mathbf{Q}\lb \theta\rb \rb  \sin\lb \theta\rb  \rb
		{\rm d} \theta \\ -& \frac{1}{3}\lb\frac{4}{2\pi}\int_{0}^{2 \pi} p\lb \mathbf{Q}_{\frac{\delta}{2}c \Delta t}\lb \theta\rb \rb  {\rm d} \theta - \frac{1}{2\pi}\int_{0}^{2 \pi} p\lb \mathbf{Q}_{\delta c \Delta t}\lb \theta\rb \rb  {\rm d} \theta \rb + \mathcal{O} \lb \Delta t^3\rb . 
	\end{split}
\end{equation}  
The combination of the formula for updating $p\lb P\rb $ and the original EG2 formulas for $u\lb P\rb $ and $v\lb P\rb $ leads to new evolution operators, which we refer to as EG2$_\delta$. Note that for $\delta = 0$,  EG2$_{0}$ reduces to EG2.

Furthermore, the exact equation in \eqref{eq:exactEvoBic} for $u\lb P\rb $ contains the term $\frac{1}{2}u\lb P'\rb $. This contribution is lost when using the exact integration step \eqref{eq:integration}, which suggests two possible ways of approximation. The first option is to replace the term $\frac{1}{2}u\lb P'\rb $ in the exact formula by an approximation obtained from Lemma~\ref{th:approximation_of_pv}. The second option is to replace the term $u\lb P'\rb $ in the integration step \eqref{eq:integration} by such an approximation. The same applies to the update of $v\lb P\rb $. The formulas obtained by the first option are
\begin{equation} \label{eq:EG2nu}
\resizebox{0.93\linewidth}{!}{$
	\begin{aligned}
		u\lb P\rb  & =\frac{1}{\pi} \int_0^{2\pi} \lb -p\lb \mathbf{Q}\lb \theta\rb \rb 
		\cos\lb \theta\rb  + u\lb \mathbf{Q}\lb \theta\rb \rb  \lb 2 \cos^2\lb \theta\rb  -
		\frac{1}{2} \rb + 2 v\lb \mathbf{Q}\lb \theta\rb \rb  \sin\lb \theta\rb \cos\lb \theta\rb 
		\rb {\rm d} \theta \\
		& -  \lb u\lb P'\rb -\frac{1}{3}\lb\frac{4}{2\pi}\int_{0}^{2 \pi} u\lb \mathbf{Q}_{\frac{\nu}{2}c \Delta t}\lb \theta\rb \rb  {\rm d} \theta - \frac{1}{2\pi}\int_{0}^{2 \pi} u\lb \mathbf{Q}_{\nu c \Delta t}\lb \theta\rb \rb  {\rm d} \theta \rb\rb+ \mathcal{O} \lb \Delta \textcolor{red}{t}^3\rb , \\[1.ex]
		v\lb P\rb  & =\frac{1}{\pi} \int_0^{2\pi} \lb -p\lb \mathbf{Q}\lb \theta\rb \rb 
		\sin\lb \theta\rb  + 2 u\lb \mathbf{Q}\lb \theta\rb \rb  \sin\lb \theta\rb  \cos\lb \theta\rb  +
		v\lb \mathbf{Q}\lb \theta\rb \rb  \lb 2 \sin^2 \lb \theta\rb  - \frac{1}{2}\rb
		\rb {\rm d} \theta \\
		& -  \lb v\lb P'\rb -\frac{1}{3}\lb\frac{4}{2\pi}\int_{0}^{2 \pi} v\lb \mathbf{Q}_{\frac{\nu}{2}c \Delta t}\lb \theta\rb \rb  {\rm d} \theta - \frac{1}{2\pi}\int_{0}^{2 \pi} v\lb \mathbf{Q}_{\nu c \Delta t}\lb \theta\rb \rb  {\rm d} \theta \rb\rb+ \mathcal{O} \lb \Delta {t}^3\rb . \\
	\end{aligned}
    $}
\end{equation}
Using the second option yields
\begin{equation*}
\resizebox{0.93\linewidth}{!}{$
	\begin{aligned}
		u\lb P\rb  & =\frac{1}{\pi} \int_0^{2\pi} \lb -p\lb \mathbf{Q}\lb \theta\rb \rb 
		\cos\lb \theta\rb  + u\lb \mathbf{Q}\lb \theta\rb \rb  \lb 2 \cos^2\lb \theta\rb  -
		\frac{1}{2} \rb + 2 v\lb \mathbf{Q}\lb \theta\rb \rb  \sin\lb \theta\rb \cos\lb \theta\rb 
		\rb {\rm d} \theta \\
		& +  \lb u\lb P'\rb -\frac{1}{3}\lb\frac{4}{2\pi}\int_{0}^{2 \pi} u\lb \mathbf{Q}_{\frac{\nu}{2}c \Delta t}\lb \theta\rb \rb  {\rm d} \theta - \frac{1}{2\pi}\int_{0}^{2 \pi} u\lb \mathbf{Q}_{\nu c \Delta t}\lb \theta\rb \rb  {\rm d} \theta \rb\rb+ \mathcal{O} \lb \Delta {t}^3\rb , \\[1.ex]
		v\lb P\rb  & =\frac{1}{\pi} \int_0^{2\pi} \lb -p\lb \mathbf{Q}\lb \theta\rb \rb 
		\sin\lb \theta\rb  + 2 u\lb \mathbf{Q}\lb \theta\rb \rb  \sin\lb \theta\rb  \cos\lb \theta\rb  +
		v\lb \mathbf{Q}\lb \theta\rb \rb  \lb 2 \sin^2 \lb \theta\rb  - \frac{1}{2}\rb
		\rb {\rm d} \theta \\
		& +  \lb v\lb P'\rb -\frac{1}{3}\lb\frac{4}{2\pi}\int_{0}^{2 \pi} v\lb \mathbf{Q}_{\frac{\nu}{2}c \Delta t}\lb \theta\rb \rb  {\rm d} \theta - \frac{1}{2\pi}\int_{0}^{2 \pi} v\lb \mathbf{Q}_{\nu c \Delta t}\lb \theta\rb \rb  {\rm d} \theta \rb\rb+ \mathcal{O} \lb \Delta {t}^3\rb . \\
	\end{aligned}
    $}
\end{equation*}
For both options, we have $\nu \in [0,1]$. However, the formulas in \eqref{eq:EG2nu} yield better numerical results. We refer to the evolution that uses \eqref{eq:eg2_delta} for updating $p\lb P\rb $ and \eqref{eq:EG2nu} for updating $u\lb P\rb $ and $v\lb P\rb $ as EG2$_{\delta,\nu}$. In Section~\ref{sec:stability}, we will show that both new evolution operators, EG2$_\delta$ and EG2$_{\delta,\nu}$, increase the admissible time step of the AF methods for well-chosen values of $\delta$ and $\nu$. However, they also increase the computational cost, since additional integrals are involved. To reduce computational costs, Lemma \ref{lem} is applied to all integrals. This ensures that the numerical integration is third-order accurate. Numerical experiments have shown that it is necessary to choose $n = 8$ instead of $4$, which is used for the $\mathrm{EG}^{\text{quad}}$ operator. Consequently, all integrals are approximated using the values at the points $\mathbf{Q}\lb 0\rb $, $\mathbf{Q}\lb\frac{\pi}{4}\rb$, $\mathbf{Q}\lb\frac{\pi}{2}\rb$, $\mathbf{Q}\lb\frac{3\pi}{4}\rb$, $\mathbf{Q}\lb \pi\rb $, $\mathbf{Q}\lb\frac{5\pi}{4}\rb$, $\mathbf{Q}\lb\frac{3\pi}{2}\rb$, and $\mathbf{Q}\lb\frac{7\pi}{4}\rb$. We denote by $\widehat{\mathrm{EG2}}_{\delta}$ the ${\mathrm{EG2}}_{\delta}$ operator based on the circle approximations. Analogously, we refer to $\widehat{\mathrm{EG2}}_{\delta,\nu}$.

We will now study the stability and accuracy of the new AF methods.

\subsection{Investigation of linear stability}\label{sec:stability}
To analyse the linear stability of the AF method with different evolution operators for updating the point values, we consider discretisations on a quadratic domain with double periodic boundary conditions. The domain is discretised using a grid with $m \times m$ grid cells. The linear method can be written in the form 
$$
U^{n+1} = B\lb \Delta t\rb  U^n,
$$
where the vector $U^n$ contains all degrees of freedom of the two-dimensional Cartesian grid simulation and the matrix $B$ describes the evolution of the degrees of freedom during one time step with size $\Delta t$ using the AF method. We can investigate the stability of the method under different time step restrictions. A necessary condition for stability is that all eigenvalues of $B\lb \Delta t\rb $ lie within the unit circle. We have previously found that the AF method with EG2 is stable under the condition $\mbox{CFL} \le 0.279$. As indicated in Figure~\ref{fig2}, the new approximate evolution operator EG$^\text{quad}$ remains also stable  for $\mbox{CFL} \le 0.279$. For a method employing EG2$_\delta$, $\delta = 0, 0.1, \dots, 1$,  we observe that, as the radius of the circles used for the approximation of the point values increases, the maximum admissible CFL number also increases. The results are shown in Table \ref{Tab:CFL_values_EG2i}. Up to approximations, using $ R = 0.5 c \Delta t$ the CFL number is comparable to the one that we get using the original EG2 operator.  For $\delta \ge 0.7$, the CFL number increases significantly to $0.419$, \ignore{$0.4189$}, which represents a considerable improvement over the previous evolution operators. For the calculations, we use a $20\times 20$ grid. Numerical simulations on finer grids confirm stability under the same time step restriction. 
\begin{table}[!ht]
\caption{Maximum admissible CFL numbers for the stable method with EG2$_\delta$, $\delta = 0, 0.1, \dots, 1$.}
\vspace*{0.1cm}
\sisetup{round-mode=places,round-precision=3}
\hspace*{+0.2cm}
\begin{minipage}{0.95\textwidth}
\centering
\begin{tabular}{l *{11}{S[table-column-width=0.8cm,table-text-alignment=center]}}
\toprule
{$\delta$}& {{0.0}} & {0.1} & {0.2} & {0.3} & {0.4} & {0.5} & {0.6} & {0.7} & {0.8} & {0.9} & {1.0} \\
\midrule
CFL & \num{0.2791} & \num{0.2865} & \num{0.2859} & \num{0.2899} & \num{0.2973} & \num{0.3091} & \num{0.3316} & \num{0.4189} & \num{0.4189} & \num{0.4189} & \num{0.4189} \\
\bottomrule
\end{tabular}
\label{Tab:CFL_values_EG2i}
\end{minipage}
\end{table}

Calculations using the exact formulas for $u\lb P\rb $ and $v\lb P\rb $ indicate that, for EG2$_\delta$ with $\delta > 0.7$, the admissible CFL number is determined by these formulas; see Table \ref{Tab:CFL_values_EG2i_exact}. 
\begin{table}[!ht]
\caption{Maximum admissible CFL numbers for the stable method with EG2$_\delta$, $\delta = 0, 0.7, 0.8, 0.9,1 $ for the evolution of $p\lb P\rb $ and exact evolution for $u\lb P\rb $ and $v\lb P\rb $.}
\vspace*{0.1cm}
\sisetup{round-mode=places,round-precision=3}
\hspace*{+0.2cm}
\begin{minipage}{0.95\textwidth}
\centering
\begin{tabular}{l *{6}{S[table-column-width=0.8cm,table-text-alignment=center]}}
\toprule
{$\delta$}& {0} &{0.7} & {0.8} & {0.9} & {1.0} \\
\midrule
CFL & \num{0.2791}& \num{0.4314} & \num{0.4531} & \num{0.4619} & \num{0.4712} \\
\bottomrule
\end{tabular}
\label{Tab:CFL_values_EG2i_exact}
\end{minipage}
\end{table}

These results suggest that further improvement of the stability of the AF methods requires adapting the approximate evolution equations for $u\lb P\rb $ and $v\lb P\rb $, as realised in EG2$_{\delta,\nu}$. Its maximum admissible CFL number increases further, for well-chosen $\delta$ and $\nu$, to 0.44. Table~\ref{Tab:CFL_values_EG2dn} lists the maximum admissible CFL numbers for different combinations of $\nu$ and $\delta$.
\begin{table}[ht]
\centering
\caption{Maximum admissible CFL numbers for the stable method EG2$_{\delta,\nu}$ for different combinations of $\delta$ and $\nu$.}
\label{Tab:CFL_values_EG2dn}
\begin{tabular}{c *{4}{S[table-format=1.3, table-column-width=1.5cm]}}
\toprule
\multicolumn{5}{c}{\textbf{CFL numbers for different values of $\delta$ and $\nu$}} \\
\cmidrule(lr){2-5}
{} & {$\delta = 0.7$} & {$\delta = 0.8$} & {$\delta = 0.9$} & {$\delta = 1.0$} \\
\midrule
$\nu = 0.1$  & 0.434 & 0.371  & 0.434 & 0.434 \\
$\nu = 0.2$ & 0.426 & 0.440 & 0.437 & 0.433 \\
$\nu = 0.3$ & 0.417  & 0.438 & 0.435 & 0.431 \\
$\nu = 0.4$ & 0.411  & 0.435 & 0.434 & 0.430 \\
$\nu = 0.5$ & 0.403 & 0.421  & 0.431 & 0.429 \\
\bottomrule
\end{tabular}
\end{table}

 In Figure~\ref{fig:EG2_del_eigenvalues} we show the eigenvalues of the matrix $B$ for the AF method using different evolution operators with CFL = 0.44. The second plot displays the results obtained with the original EG2 operator, where the real part of many eigenvalues lies outside the unit circle. After modifying the formulas for $p\lb P\rb $, the distribution of the eigenvalues changes significantly (see fourth plot) and becomes more comparable to that of the method using the exact evolution operator (see first plot). Further adjustments to the formulas for $u\lb P\rb $ and $v\lb P\rb $ alter the eigenvalue distribution such that all eigenvalues remain inside the unit circle up to CFL = 0.44 (see fifth plot). As documented in Figure~\ref{fig:EG2_del_eigenvalues} (third plot), and in Figure~\ref{fig2}, the AF method with EG$^{\text{quad}}$ operator is unstable for CFL = 0.44, but remains stable for CFL=0.279. 

\begin{figure}[!htb]
	\centering

			\includegraphics[width=0.2\textwidth]{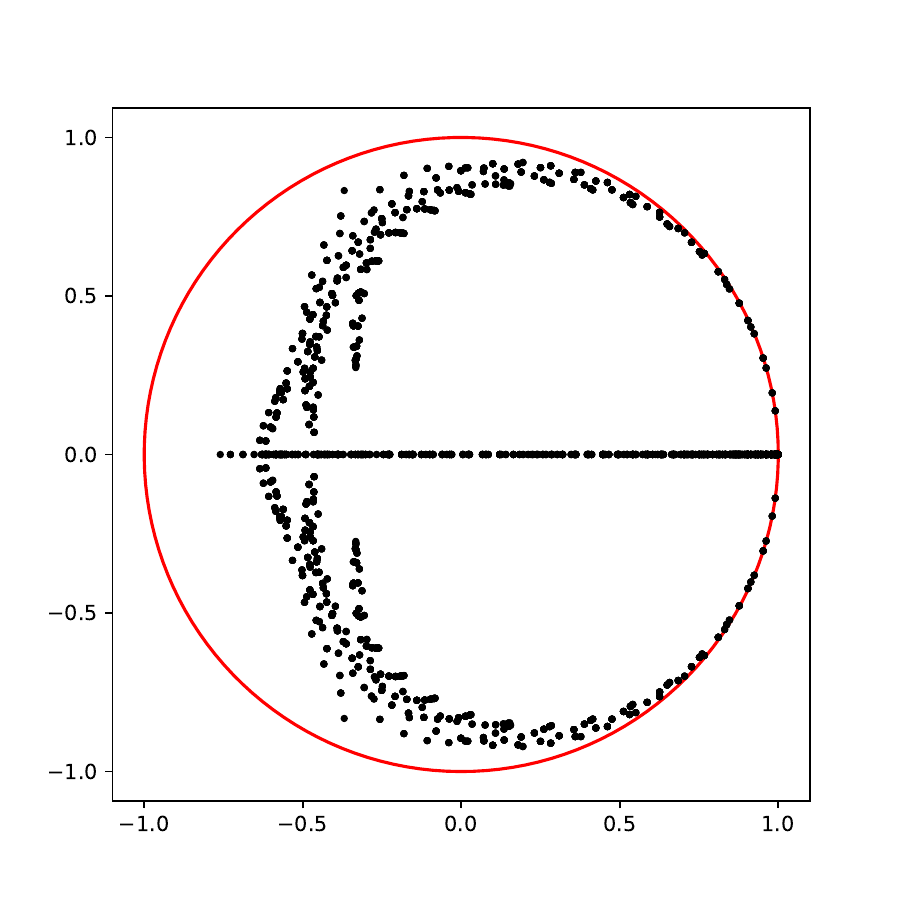}%
			\includegraphics[width=0.2\textwidth]{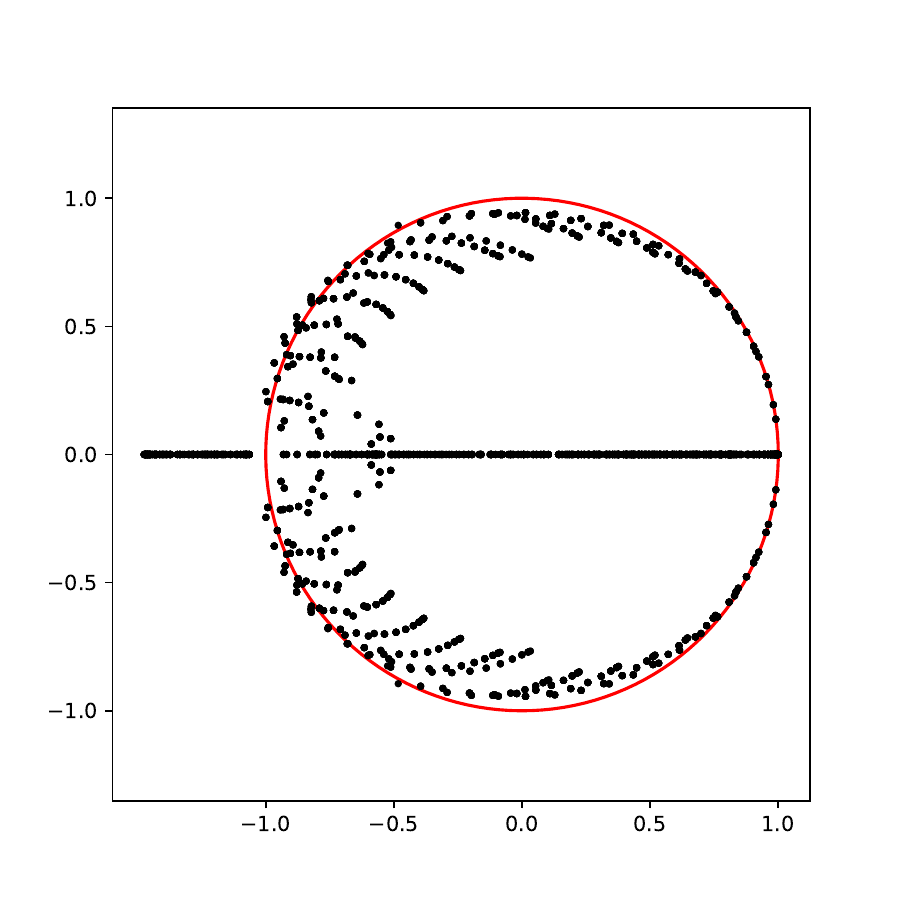}%
            \includegraphics[width=0.2\textwidth]{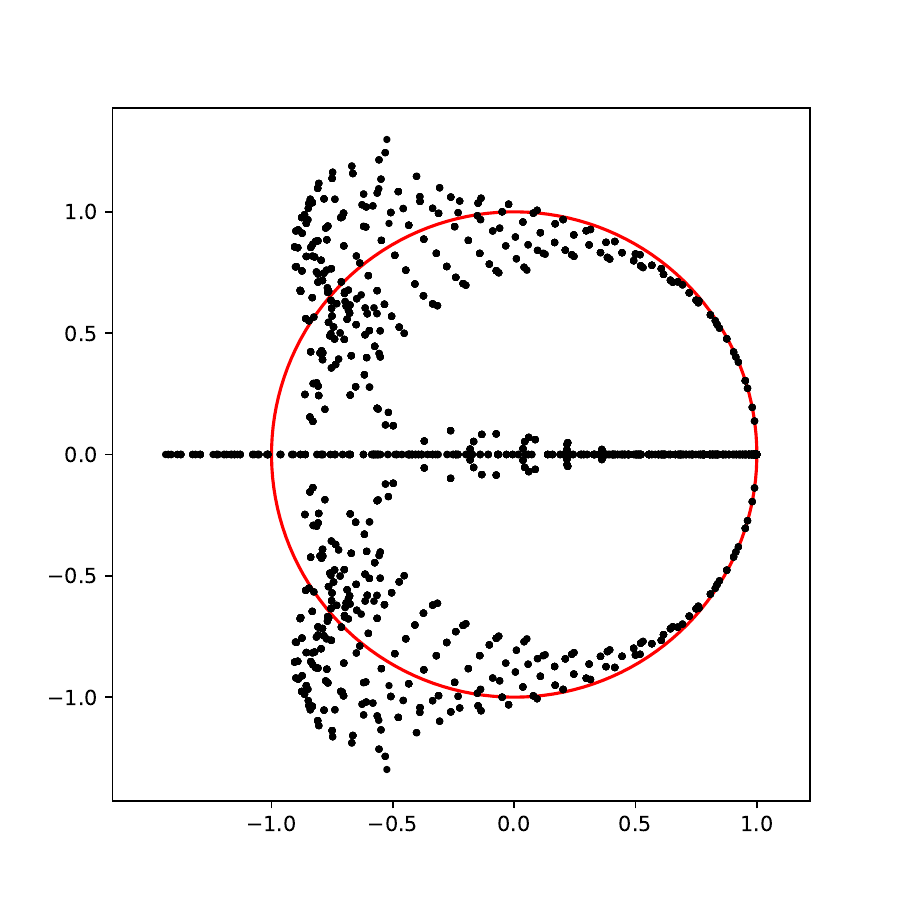}%
			\includegraphics[width=0.2\textwidth]{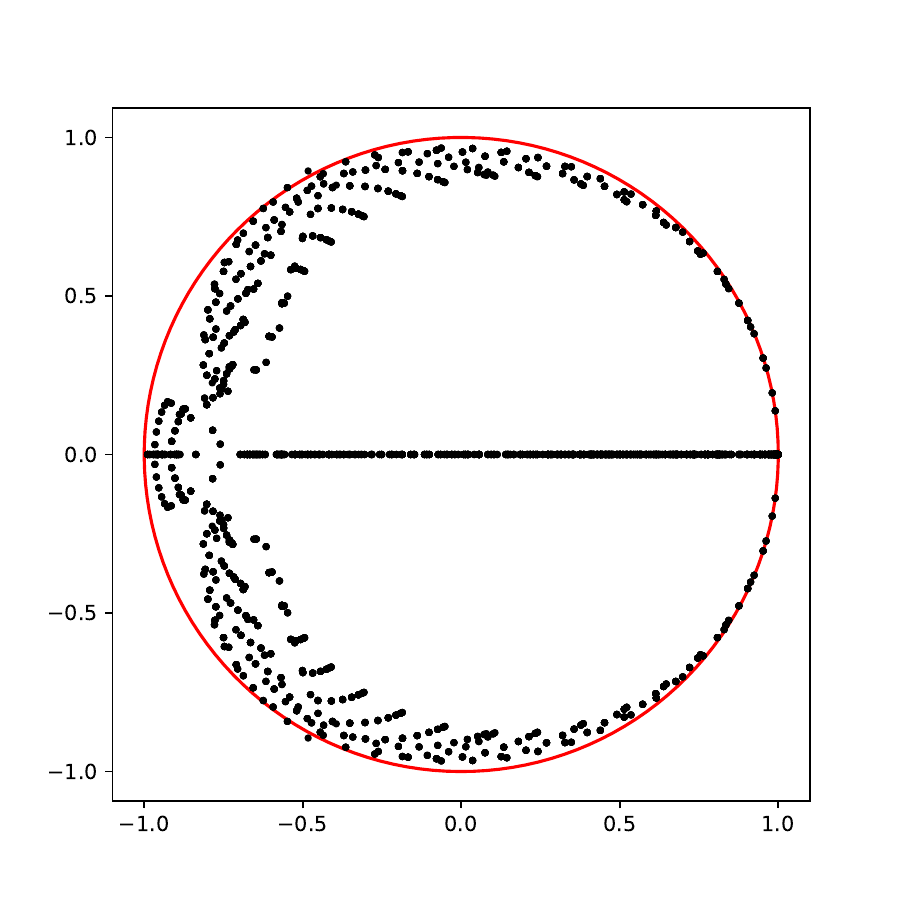}%
			\includegraphics[width=0.2\textwidth]{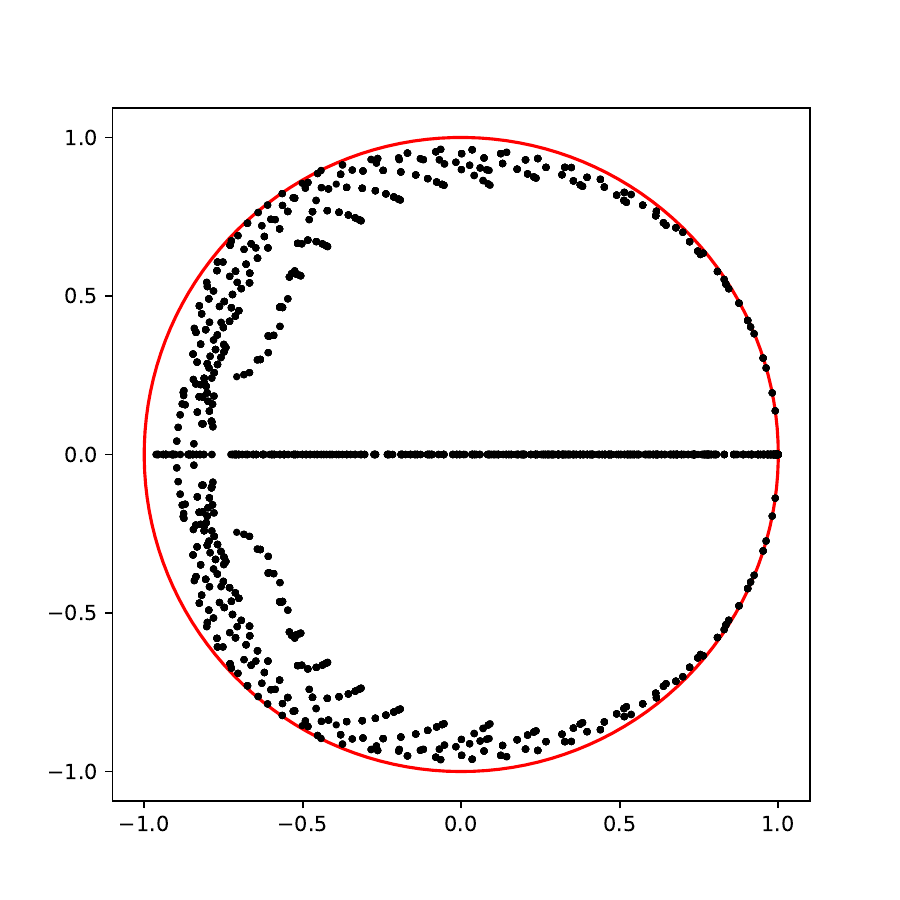}%

\caption{\label{fig:EG2_del_eigenvalues}%
		Eigenvalues of the matrix $B$ for the AF method using (from left to right):
		exact evolution, EG2, $\mathrm{EG}^{\text{quad}}$,  EG2$_{0.7}$ and EG2$_{0.8,0.2}$ with CFL $=0.44$.
		A $20\times 20$ grid with periodic boundary conditions was used.}

\end{figure}
			
\begin{figure}[!htb]
	\centering
\includegraphics[width=0.2\textwidth]{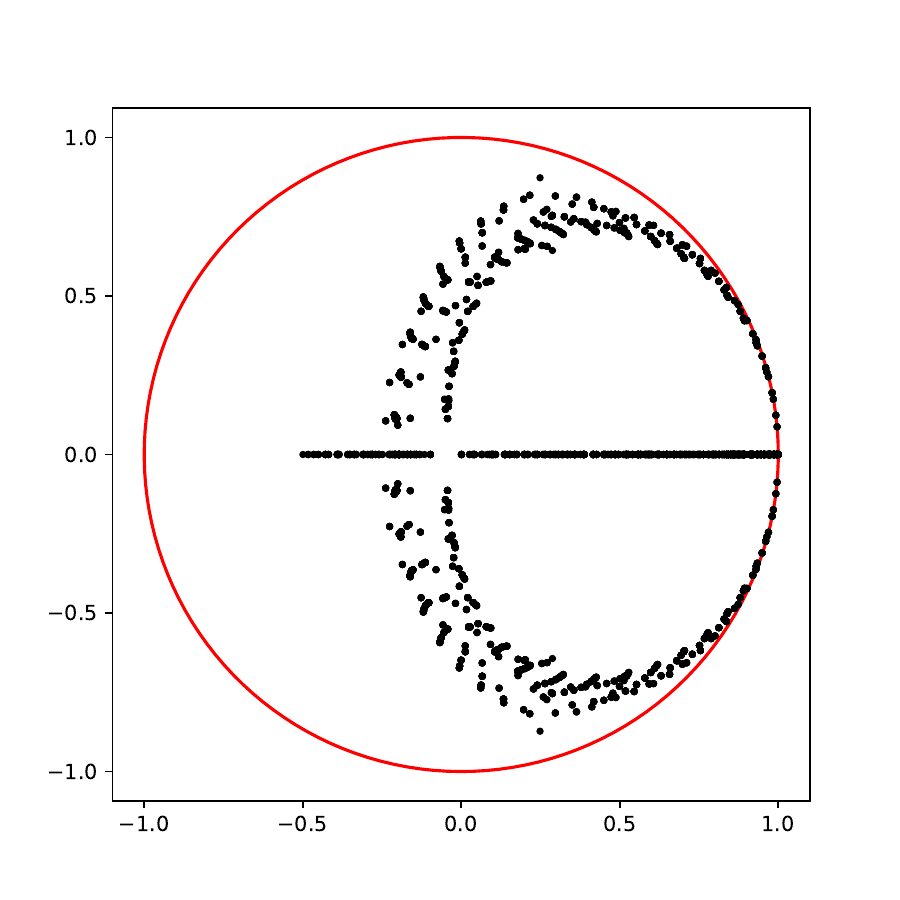}
\includegraphics[width=0.2\textwidth]{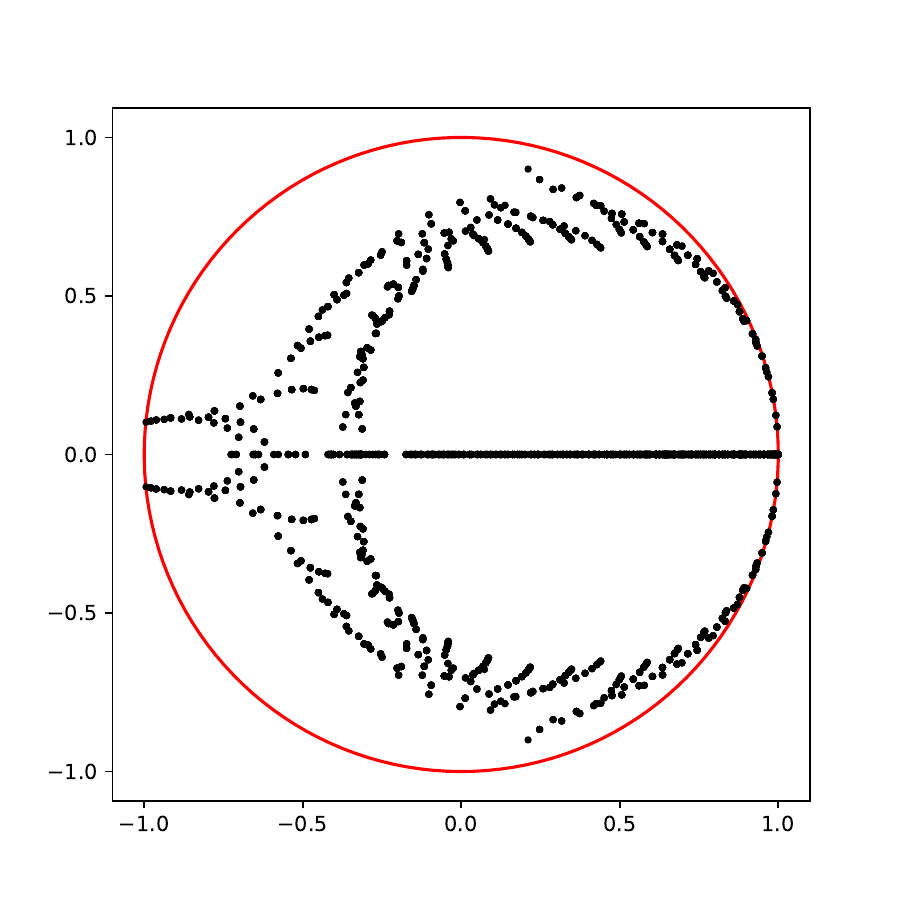}
\includegraphics[width=0.2\textwidth]{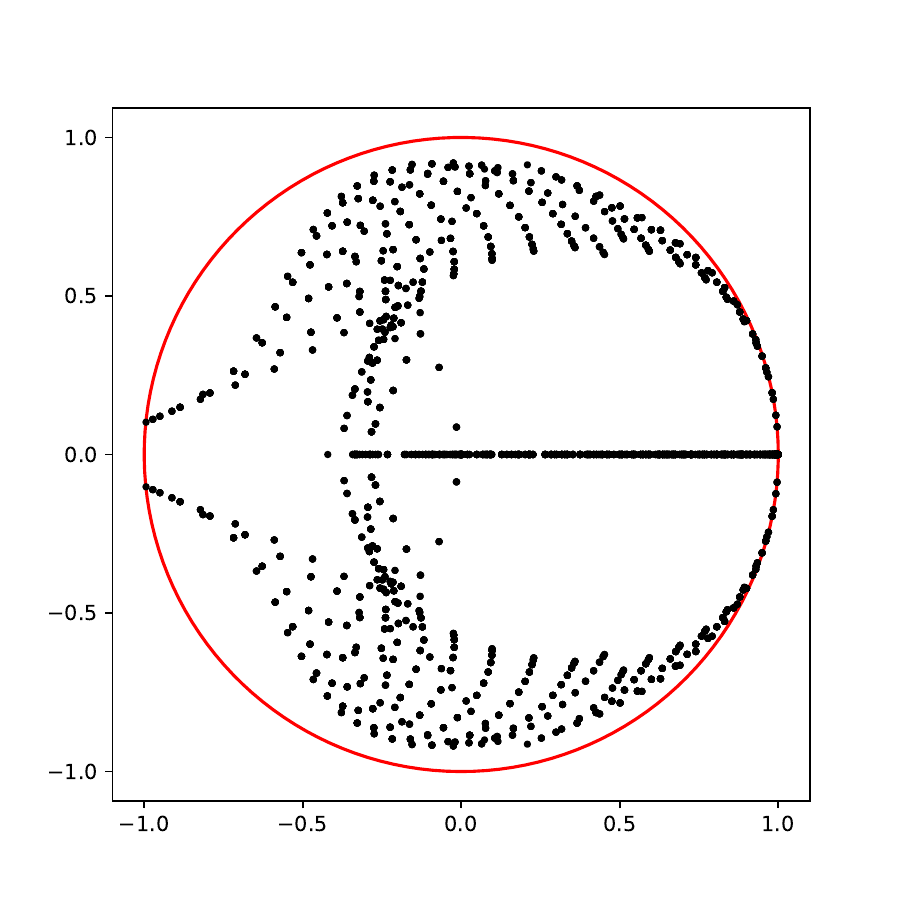}

\caption{\label{fig2}
Eigenvalues of the matrix $B$ for the AF method using exact evolution, EG2 and $\mathrm{EG}^{\text{quad}}$ with CFL $=0.279$. A $20\times 20$ grid with periodic boundary conditions was used.}
\end{figure}

In the Appendix, we present a further simplification of the AF methods with $\widehat{\mathrm{EG2}}_{1.0}$ and $\widehat{\mathrm{EG2}}_{1.0,0.2}$  operators using numerical quadrature for the integration along the base of the bicharacteristic cones.  Our numerical experiments confirm that these AF methods are stable up to a CFL number equal to 0.4.

\subsection{Investigation of accuracy} \label{sec:accuracy}

In this section, we present the accuracy results for the different new evolution operators and compare them with those obtained using the exact evolution operator. To this end, we consider Examples~\ref{ex:1} and~\ref{ex:2}. 

Tables~\ref{Tab:convergenceProblem1_eg2eg11_t01} and \ref{Tab:convergenceProblem1_eg2eg11_t1} show the results for Example~\ref{ex:1}. The computations are performed on grids with $64\times 64$, $128 \times 128$, and $256 \times 256$ grid cells. We measure the error of $p$ at $t = 0.1$ and $t = 1$ for the AF method using exact evolution, $\mathrm{EG}^{\text{quad}}$, EG2$_{0.7}$, and EG2$_{0.8,0.2}$. For each method, the time steps are chosen close to their respective stability limit, i.e., CFL$=0.5, 0.276, 0.418$, and $0.439$ were used for the AF method with the exact and approximate evolution operators   $\mathrm{EG}^{\text{quad}}$, EG2$_{0.7}$, and EG2$_{0.8,0.2}$, respectively.

All methods exhibit third-order accuracy. As expected, the method with the exact evolution yields the smallest errors for all test cases and time levels. All four methods yield errors of the same order of magnitude. Among the methods employing approximate evolution operators, EG2$_{0.8,0.2}$ produces the smallest errors, although the results are very similar to those obtained with $\mathrm{EG}^{\text{quad}}$ and EG2$_{0.7}$. In contrast, the errors obtained with EG2 are systematically larger than those of the other approximate methods.



Similar results are obtained for Example~\ref{ex:2}. We now measure the errors in $u$ and $v$ at times $t = 0.1$ and $t = 1$. The results show that the methods using approximate evolution operators yield nearly identical errors. Even though this is not directly apparent from the numerical values, it was shown in \cite[Table 5]{article:CHL2024} that the convergence order of the AF method with exact evolution is not third order at all times.

\ignore{  In the Tables \ref{Tab:convergenceProblem1_t01_eg2delta} - \ref{Tab:convergenceProblem2_t1_eg2delta} we investigate for the same test cases the performance of some EG2$_\delta$ with $\delta = 0.7, 0.8, 0.9,1$.  For all calculations, CFL = 0.4189 was used.  Again, we observe third-order accuracy. As expected, we observe that the method using the smallest $\delta$ produces the lowest error, and that the larger $\delta$ is, the larger the error. However, the difference is minimal. If it is necessary to keep the calculation costs as low as possible, then EG2$_1$ is the better choice, as the outer circle used to approximate the point value appears once again in the evolution formula.}

\begin{table}[!ht]
\centering
\caption{Errors measured in the $L_1$-norm and EOC for Example~\ref{ex:1} using exact evolution, $\mathrm{EG}^{\text{quad}}$, EG2$_{0.7}$, and EG2$_{0.8,0.2}$ at $t = 0.1$.}
\label{Tab:convergenceProblem1_eg2eg11_t01}
\vspace*{0.15cm}
\sisetup{scientific-notation = true,round-mode = places}
\renewcommand{\arraystretch}{0.9}
\setlength{\tabcolsep}{3pt}
\resizebox{\linewidth}{!}{$
\begin{tabular}{c*{4}{S[round-precision=2, table-format=1.2e-2]}*{4}{S[round-precision=3, table-format=1.4]}}
\toprule
Res. & \multicolumn{4}{c}{Error in $p$} & \multicolumn{4}{c}{EOC} \\
\cmidrule(lr){2-5} \cmidrule(lr){6-9}
& {exact} & {$\mathrm{EG}^{\text{quad}}$} & {EG2$_{0.7}$} & {EG2$_{0.8,0.2}$}
& {exact} & {$\mathrm{EG}^{\text{quad}}$} & {EG2$_{0.7}$} & {EG2$_{0.8,0.2}$} \\
\midrule
$64\times64$  & \num{1.6042545843454000e-05} & \num{2.6170254313972369e-05}
	& \num{2.1364407395546843e-05} & \num{2.4309962543038923e-05}
	& {---} & {---} & {---} & {---} \\
$128\times128$ & \num{2.0386219828327593e-06} & \num{3.3640431718672253e-06}
	& \num{3.1870953920483475e-06} & \num{3.0155002681889683e-06}
	& 2.9762 & 2.9597 & 2.7449 & 3.0111 \\
$256\times256$ & \num{2.5426897907752349e-07} & \num{4.2589073555490836e-07}
	& \num{3.9718607411956407e-07} & \num{3.7413924157942913e-07}
	& 3.0032 & 2.9816 & 3.0044 & 3.0108 \\
\bottomrule
\end{tabular}
$}
\end{table}

\begin{table}[!ht]
\centering
\caption{Errors measured in the $L_1$-norm and EOC for Example~\ref{ex:1} using exact evolution, EG2, EG2$_{0.7}$, and EG2$_{0.8,0.2}$ at $t = 1$.}
\label{Tab:convergenceProblem1_eg2eg11_t1}
\vspace*{0.15cm}
\sisetup{scientific-notation = true,round-mode = places}
\renewcommand{\arraystretch}{0.9}
\setlength{\tabcolsep}{3pt}
\resizebox{\linewidth}{!}{$
\begin{tabular}{c*{4}{S[round-precision=2, table-format=1.2e-2]}*{4}{S[round-precision=3, table-format=1.4]}}
\toprule
Res. & \multicolumn{4}{c}{Error in $p$} & \multicolumn{4}{c}{EOC} \\
\cmidrule(lr){2-5} \cmidrule(lr){6-9}
     & {exact} & {$\mathrm{EG}^{\text{quad}}$} & {EG2$_{0.7}$} & {EG2$_{0.8,0.2}$}
	 & {exact} & {$\mathrm{EG}^{\text{quad}}$} & {EG2$_{0.7}$} & {EG2$_{0.8,0.2}$} \\
\midrule
$64\times64$  & \num{1.9950350026772478e-04} & \num{2.9903225074379819e-04}
    & \num{3.0539450329744686e-04} & \num{2.9520821618622261e-04}
	& {---} & {---} & {---} & {---} \\
$128\times128$ & \num{2.5059362405697071e-05} & \num{3.7551963945446894e-05}
    & \num{3.8032190070219544e-05} & \num{3.7233539642388949e-05}
	& 2.9930 & 2.9933 & 2.9900 & 2.9871 \\
$256\times256$ & \num{3.1362045778548605e-06} & \num{4.7018103934318804e-06}
	& \num{4.7675390864590091e-06} & \num{4.6379581765285962e-06}
	& 2.9983 & 2.9976 & 2.9968 & 3.0050 \\
\bottomrule
\end{tabular}
$}
\end{table}

\begin{table}[!ht]
\centering
\caption{Errors measured in the $L_1$-norm and EOC for Example~\ref{ex:2} using exact evolution, EG2, EG2$_{0.7}$, and EG2$_{0.8,0.2}$  at $t = 0.1$.}
\label{Tab:convergenceProblem2_eg2eg11_t01}
\vspace*{0.15cm}
\sisetup{scientific-notation = true,round-mode = places}
\renewcommand{\arraystretch}{0.9}
\setlength{\tabcolsep}{3pt}
\resizebox{\linewidth}{!}{$
\begin{tabular}{c*{4}{S[round-precision=2, table-format=1.2e-2]}*{4}{S[round-precision=3, table-format=1.4]}}
\toprule
Res. & \multicolumn{4}{c}{Error in $u,v$} & \multicolumn{4}{c}{EOC} \\
\cmidrule(lr){2-5} \cmidrule(lr){6-9}
	 & {exact} & {$\mathrm{EG}^{\text{quad}}$} & {EG2$_{0.7}$} & {EG2$_{0.8,0.2}$}
	 & {exact} & {$\mathrm{EG}^{\text{quad}}$} & {EG2$_{0.7}$} & {EG2$_{0.8,0.2}$} \\
\midrule
$64\times64$   & \num{1.2652061957520213e-05} & \num{1.6456108465489550e-05}
	 & \num{1.9185690310434239e-05} & \num{1.9425967000877102e-05}
	 & {---} & {---} & {---} & {---} \\
$128\times128$  & \num{1.6021279860692110e-06} & \num{2.0688490140222487e-06}
	 & \num{2.3825592164870894e-06} & \num{2.4011283886559404e-06}
	 & 2.9813 & 2.9917 & 3.0094 & 3.0162 \\
$256\times256$  & \num{1.9969677502133022e-07} & \num{2.5935186911268085e-07}
	 & \num{2.9667984599859425e-07} & \num{2.9868476500225845e-07}
	 & 3.0041 & 2.9958 & 3.0055 & 3.0070 \\
\bottomrule
\end{tabular}
$}
\end{table}

\begin{table}[!ht]
\centering
\caption{Errors measured in the $L_1$-norm and EOC for Example~\ref{ex:2} using exact evolution, EG2, EG2$_{0.7}$, and EG2$_{0.8,0.2}$ at $t = 1$.}
\label{Tab:convergenceProblem2_eg2eg11_t1}
\vspace*{0.15cm}
\sisetup{scientific-notation = true,round-mode = places}
\renewcommand{\arraystretch}{0.9}
\setlength{\tabcolsep}{3pt}
\resizebox{\linewidth}{!}{$
\begin{tabular}{c*{4}{S[round-precision=2, table-format=1.2e-2]}*{4}{S[round-precision=3, table-format=1.4]}}
\toprule
Res. & \multicolumn{4}{c}{Error in $u,v$} & \multicolumn{4}{c}{EOC} \\
\cmidrule(lr){2-5} \cmidrule(lr){6-9}
	 & {exact} & {$\mathrm{EG}^{\text{quad}}$} & {EG2$_{0.7}$} & {EG2$_{0.8,0.2}$}
	 & {exact} & {$\mathrm{EG}^{\text{quad}}$} & {EG2$_{0.7}$} & {EG2$_{0.8,0.2}$} \\
\midrule
$64\times64$   & \num{1.5719665415403451e-04} & \num{2.3656062089539930e-04}
	 & \num{2.4068885409344508e-04} & \num{2.3262039501690976e-04}
	 & {---} & {---} & {---} & {---} \\
$128\times128$  & \num{1.9697610150159046e-05} & \num{2.9574579872342152e-05}
	 & \num{2.9902635357792019e-05} & \num{2.9260079918927454e-05}
	 & 2.9965 & 2.9998 & 3.0088 & 2.9910 \\
$256\times256$  & \num{2.4636731237458242e-06} & \num{3.6969280856873179e-06}
	 & \num{3.7457213637370746e-06} & \num{3.6434105203308627e-06}
	 & 2.9991 & 3.0000 & 2.9970 & 3.0056 \\
\bottomrule
\end{tabular}
$}
\end{table}

\noindent
\ignore{In Table \ref{Tab:convergenceProblem2} we show the results of a convergence study for Example \ref{ex:2} at time $t=1$ measuring the error in $\sqrt{u^2+v^2}$. While all three methods are third-order accurate, the AF method using EG2c produces the smallest error and allows the largest time steps. The Active Flux method with EG2b produced again a slightly larger error.}

\ignore{
\begin{table}[!ht]
	
	\caption{Error measured  at $t=0.1$ in the $L_1$-norm and EOC for Example \ref{ex:1} 
		using EG2$_{0.7}$, EG2$_{0.8}$, EG2$_{0.9}$ and EG2$_{1}$ with $\mbox{CFL}=0.4189$. }
	\vspace*{0.1cm}
	
	\sisetup{
		scientific-notation = true, 
		round-mode=places
	}
	
	\hspace*{+0.2cm}
	
	\begin{minipage}{0.8\textwidth}
		
		\begin{tabular}{
				*1{S[table-column-width=0.8cm,table-text-alignment=center]}
				*4{S[table-column-width=1.8cm,table-text-alignment=center]}
				*4{S[table-column-width=0.9cm,table-text-alignment=center, round-precision=4]}
			}
			\toprule
			{Res.}      & \multicolumn{4}{c}{Error
				in $p$}     & \multicolumn{4}{c}{EOC}  \\
			\midrule
			& {EG2$_{0.7}$}  & {EG2$_{0.8}$}        & {EG2$_{0.9}$}   & {EG2$_{1}$}     
			& {EG2$_{0.7}$} & {EG2$_{0.8}$}& {EG2$_{0.9}$}&
			{EG2$_{1}$}\\			 
			\midrule
			{64}  & \num{2.1364407395546843e-05}  & \num{2.5679587002042645e-05}    &\num{2.5691544510160805e-05
			} & \num{2.5704327997814906e-05}   &  {---} &  {---}   & {---}& {---}\\
			{128}  & \num{3.1870953920483475e-06}  & \num{3.1864208931597326e-06}  & \num{3.1857303505437566e-06}  & \num{3.1850166108924376e-06}  &  {2.7449}   & {3.0106}& {3.0116}& {3.0126}\\
			{256}  & \num{3.9718607411956407e-07}  &\num{3.9690352149171261e-07} &\num{3.9660460570695225e-07} & \num{3.9628749739124270e-07}  &  {3.0044}   & {3.0051}& {3.0059}& {3.0067} \\
			\bottomrule
			
		\end{tabular}
		\label{Tab:convergenceProblem1_t01_eg2delta}
	\end{minipage}
\end{table}

\begin{table}[!ht]
	
	\caption{Error measured  at $t=1$ in the $L_1$-norm and EOC for Example \ref{ex:1} 
		using EG2$_{0.7}$, EG2$_{0.8}$, EG2$_{0.9}$ and EG2$_{1}$ with $\mbox{CFL}=0.4189$. }
	\vspace*{0.1cm}
	
	\sisetup{
		scientific-notation = true, 
		round-mode=places
	}
	
	\hspace*{+0.2cm}
	
	\begin{minipage}{0.8\textwidth}
		
		\begin{tabular}{
				*1{S[table-column-width=0.8cm,table-text-alignment=center]}
				*4{S[table-column-width=1.8cm,table-text-alignment=center]}
				*4{S[table-column-width=0.9cm,table-text-alignment=center, round-precision=4]}
			}
			\toprule
			{Res.}      & \multicolumn{4}{c}{Error
				in $p$}     & \multicolumn{4}{c}{EOC}  \\
			\midrule
			& {EG2$_{0.7}$}  & {EG2$_{0.8}$}        & {EG2$_{0.9}$}   & {EG2$_{1}$}     
			& {EG2$_{0.7}$} & {EG2$_{0.8}$}& {EG2$_{0.9}$}&
			{EG2$_{1}$}\\			 
			\midrule
			{64}  & \num{3.0539450329744686e-04}  & \num{3.0777959781136388e-04}    &\num{3.1044735980418443e-04} & \num{3.1334451806463916e-04}   &  {---} &  {---}   & {---}& {---}\\
			{128}  & \num{3.8032190070219544e-05}  & \num{3.8363397351104912e-05}  & \num{3.8716787897405041e-05}  & \num{3.9092974781660966e-05}  &  {3.0054}   & {3.0041}& {3.0033}& {3.0028} \\
			{256}  & \num{4.7675390864590091e-06}  &  \num{4.8085549694279706e-06} &\num{4.8523155845009504e-06} & \num{4.8990920568097104e-06}  &  {2.9959} & {2.9961} & {2.9962} & {2.9963} \\
			\bottomrule
			
		\end{tabular}
		\label{Tab:convergenceProblem1_t1_eg2delta}
	\end{minipage}
\end{table}    
  
\begin{table}[!ht]
	
	\caption{Error measured  at $t=0.1$ in the $L_1$-norm and EOC for Example \ref{ex:2} 
		using EG2$_{0.7}$, EG2$_{0.8}$, EG2$_{0.9}$ and EG2$_{1}$ with $\mbox{CFL}=0.4189$. }
		\vspace*{0.1cm}
	
	\sisetup{
		scientific-notation = true, 
		round-mode=places
	}
	
	\hspace*{+0.2cm}
	
		\begin{minipage}{0.8\textwidth}
			
			\begin{tabular}{
					*1{S[table-column-width=0.8cm,table-text-alignment=center]}
					*4{S[table-column-width=1.8cm,table-text-alignment=center]}
					*4{S[table-column-width=0.9cm,table-text-alignment=center, round-precision=4]}
				}
				\toprule
				{Res.}      & \multicolumn{4}{c}{Error
					in $u,v$}     & \multicolumn{4}{c}{EOC}  \\
				\midrule
				   & {EG2$_{0.7}$}  & {EG2$_{0.8}$}        & {EG2$_{0.9}$}   & {EG2$_{1}$}     
				& {EG2$_{0.7}$} & {EG2$_{0.8}$}& {EG2$_{0.9}$}&
				{EG2$_{1}$}\\			 
				\midrule
				{64}  & \num{1.9185690310434239e-05}  & \num{1.9561747886228124e-05}    &\num{1.9964439278291308e-05
				} & \num{2.0395700301718891e-05}   &  {---} &  {---}   & {---}& {---}\\
				{128}  & \num{2.3825592164870894e-06
				}  & \num{2.4289878910541171e-06}  & \num{2.4779798738304463e-06}  & \num{2.5303477159479397e-06}  &  {3.0094}   & {3.0096}& {3.0102}& {3.0109}\\
				{256}  & \num{2.9667984599859425e-07}  &\num{3.0239762834603454e-07} &\num{3.0846729035668989e-07} & \num{3.1494782229108921e-07}  &  {3.0055}   & {3.0058}& {3.0060}& {3.0062} \\
				\bottomrule
				
			\end{tabular}
			\label{Tab:convergenceProblem2_t01_eg2delta}
		\end{minipage}
\end{table}

\begin{table}[!ht]
	\caption{Error measured  at $t=1$ in the $L_1$-norm and EOC for Example \ref{ex:2} 
		using EG2$_{0.7}$, EG2$_{0.8}$, EG2$_{0.9}$ and EG2$_{1}$ with $\mbox{CFL}=0.4189$. }
	\vspace*{0.4cm}
	\sisetup{
		scientific-notation = true, 
		round-mode=places
	}
		\hspace*{+0.2cm}
		\begin{minipage}{0.8\textwidth}
			
			\begin{tabular}{
					*1{S[table-column-width=0.8cm,table-text-alignment=center]}
					*4{S[table-column-width=1.8cm,table-text-alignment=center]}
					*4{S[table-column-width=0.9cm,table-text-alignment=center, round-precision=4]}
				}
				\toprule
				{Res.}      & \multicolumn{4}{c}{Error
					in $u,v$}     & \multicolumn{4}{c}{EOC}  \\
				\midrule
				  & {EG2$_{0.7}$}  & {EG2$_{0.8}$}    & {EG2$_{0.9}$} & {EG2$_{1}$} 
				& {EG2$_{0.7}$} & {EG2$_{0.8}$}& {EG2$_{0.9}$} & {EG2$_{1}$} \\			 
				\midrule
				{64}  & \num{2.4068885409344508e-04}  & \num{2.4257997739602163e-04	}  & \num{2.4465456821854521e-04}& \num{2.4689419274565331e-04}  & {---} &  {---}   & {---}& {---}\\
				{128} & \num{2.9902635357792019e-05
				}  & \num{3.0159860016758919e-05}  & \num{3.0433846822446934e-05} & \num{3.0725644230915686e-05}  &  {3.0088}   & {3.0078} & {3.0070}  & {3.0064} \\
				{256} & \num{3.7457213637370746e-06}  &\num{3.7777986281564289e-06} & \num{3.8119900706346181e-06}& \num{3.8485269922137438e-06}      &  {2.9970}   & {2.9970}& {2.9971}& {2.9971}\\
				\bottomrule
				
			\end{tabular}
			\label{Tab:convergenceProblem2_t1_eg2delta}
		\end{minipage}
\end{table}}

Convergence studies for the corresponding AF methods that use numerical integration can be found in Appendix \ref{app:accuracy}.
\subsection{Approximation of the Stationary Vortex} \label{sec:vortex}
In this section, we compare the performance of the AF method that uses exact evolution, $\mathrm{EG}^{\text{quad}}$, EG2$_{0.7}$ and  EG2$_{0.8,0.2}$  for the stationary vortex described in Example \ref{ex:3}. Barsukow et al. \cite{article:BHKR2019} showed that the third-order accurate Cartesian grid AF method with exact evolution operator is stationary preserving. In \cite{article:CHL2024} Chudzik et al. showed that the method using EG2 is not stationary preserving.  As expected, none of the AF methods that use the new evolution operator preserves the stationary vortex. The numerical results are shown in Figure \ref{fig:EG2_EG11_Vortex}. The solutions were calculated on grids with $64 \times 64$ and $128 \times 128$ grid cells. Apart from the AF method, which uses exact evolution, all methods produce similar solutions. Furthermore, the solutions are similar to those obtained using EG2 as the evolution operator, as shown in Figure 8 of \cite{article:CHL2024}. We refer the reader to  Appendix \ref{app:vortex} for results obtained by simplified AF methods using numerical quadratures for evolution operators.

\ignore{
\begin{figure}[!htb]
  \centerline{
  \includegraphics[width=0.32\textwidth]{./figs/Vortex/2-64.eps}\hfil  
  \includegraphics[width=0.32\textwidth]{./figs/Vortex/2b-64.eps}\hfil
  \includegraphics[width=0.32\textwidth]{./figs/Vortex/2c-64.eps}}
\caption{  
Approximation of the stationary vortex using a grid with $64\times 64$
cells at time $t=10$.  Active Flux method using EG2 (left), EG2b
(middle) and EG2c (right). } 
\end{figure}
}
\begin{figure}[htb]
\centerline{\includegraphics[width=0.25\textwidth]{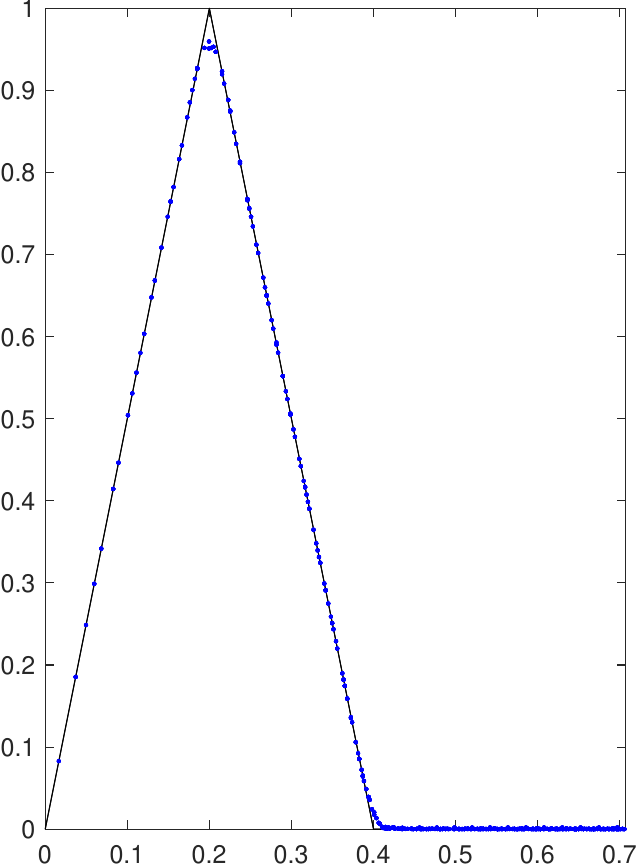}
            \includegraphics[width=0.25\textwidth]{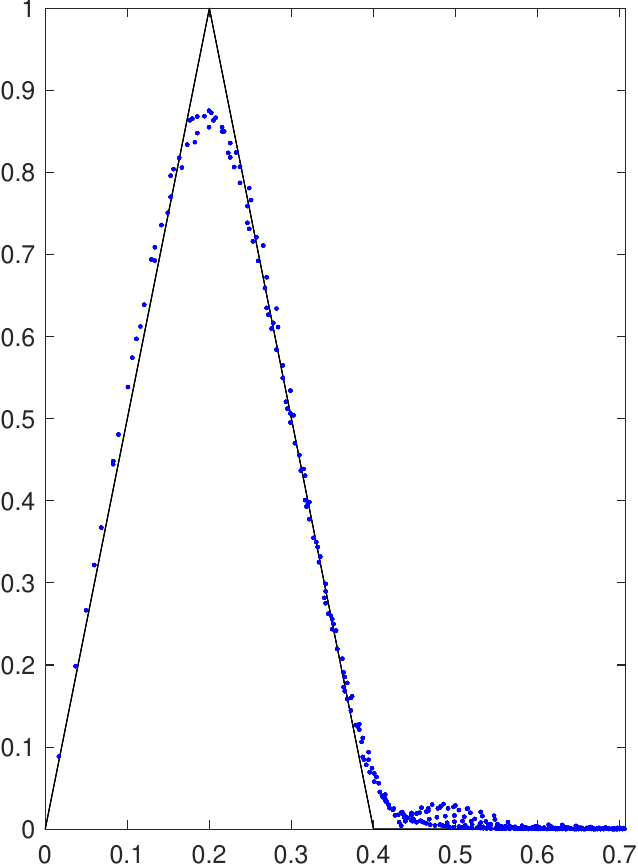}\hfil  
            \includegraphics[width=0.25\textwidth]{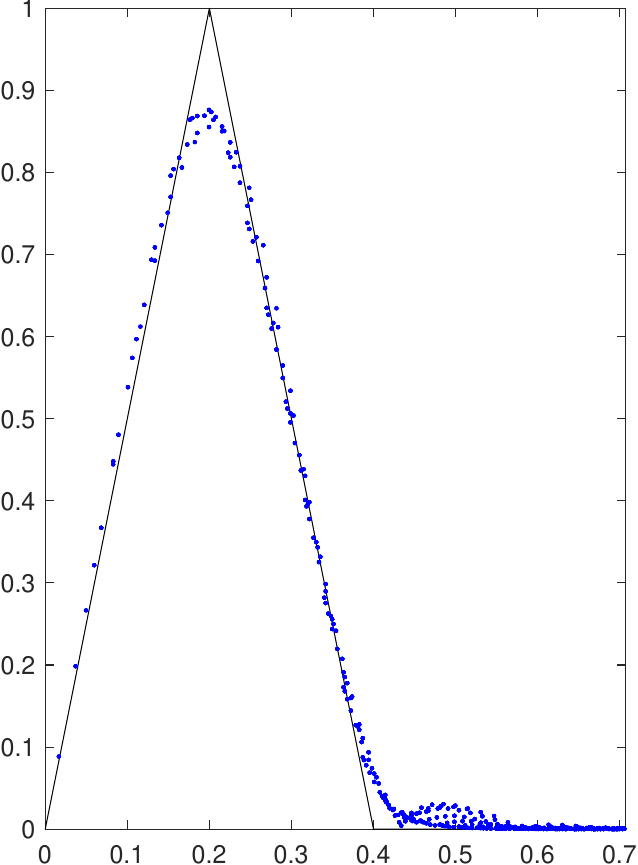}\hfil 
            \includegraphics[width=0.25\textwidth]{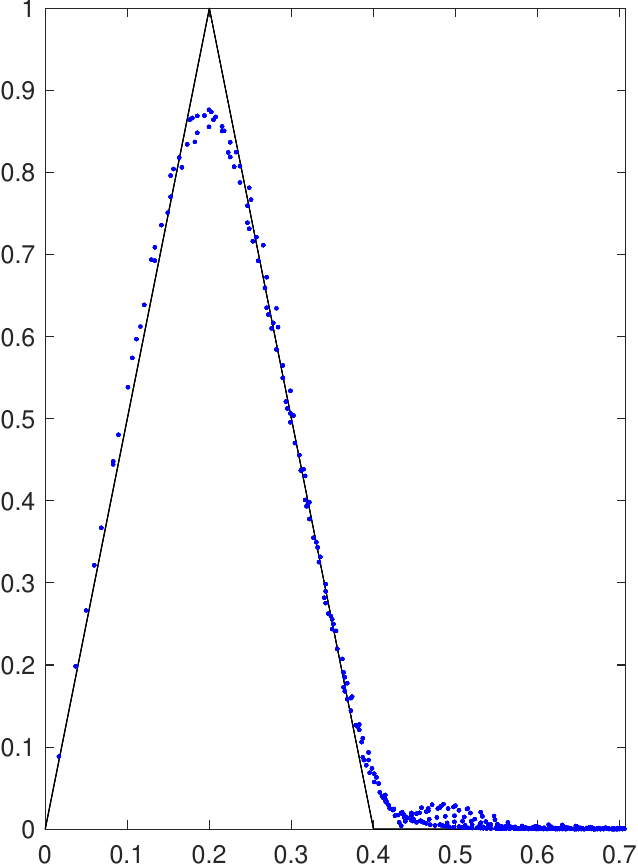}}
\centerline{\includegraphics[width=0.25\textwidth]{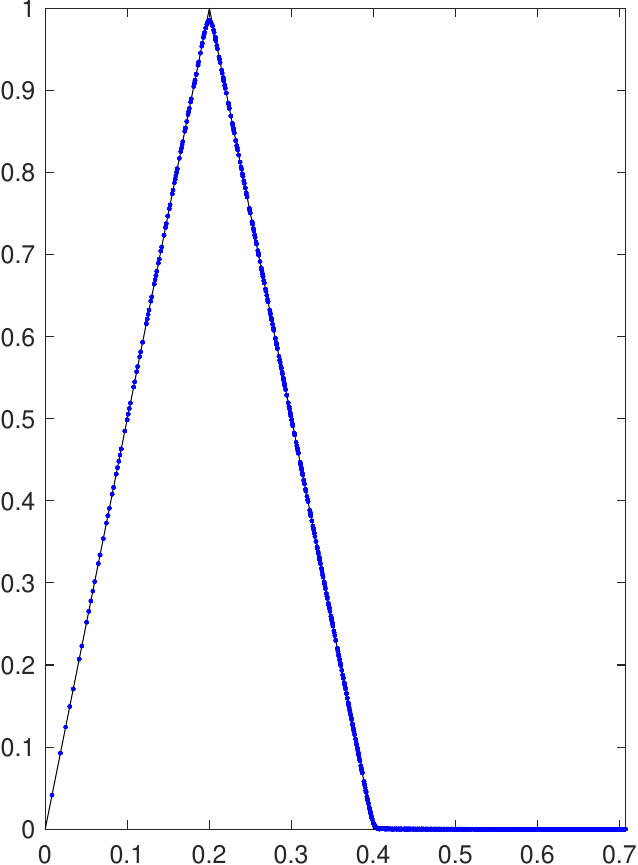}
	        \includegraphics[width=0.25\textwidth]{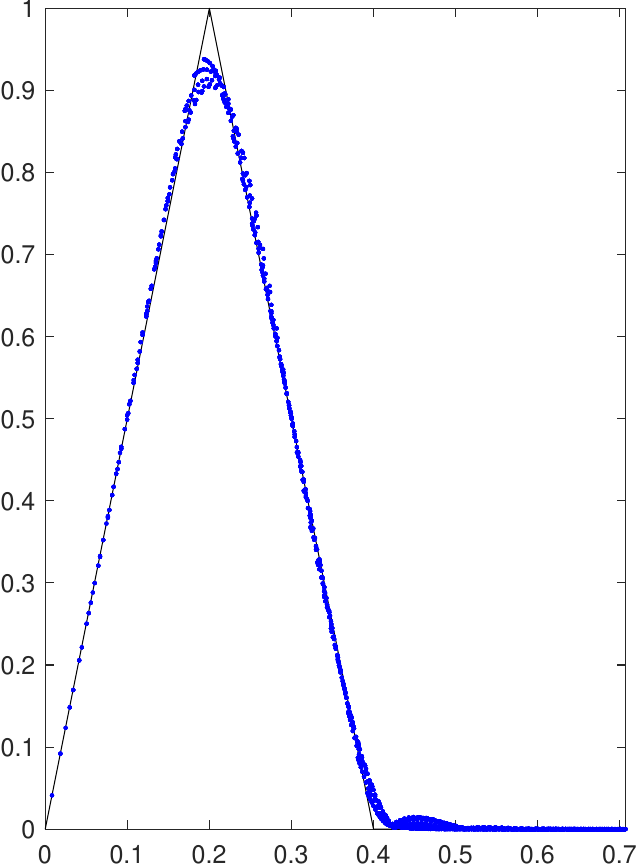}\hfil  
	        \includegraphics[width=0.25\textwidth]{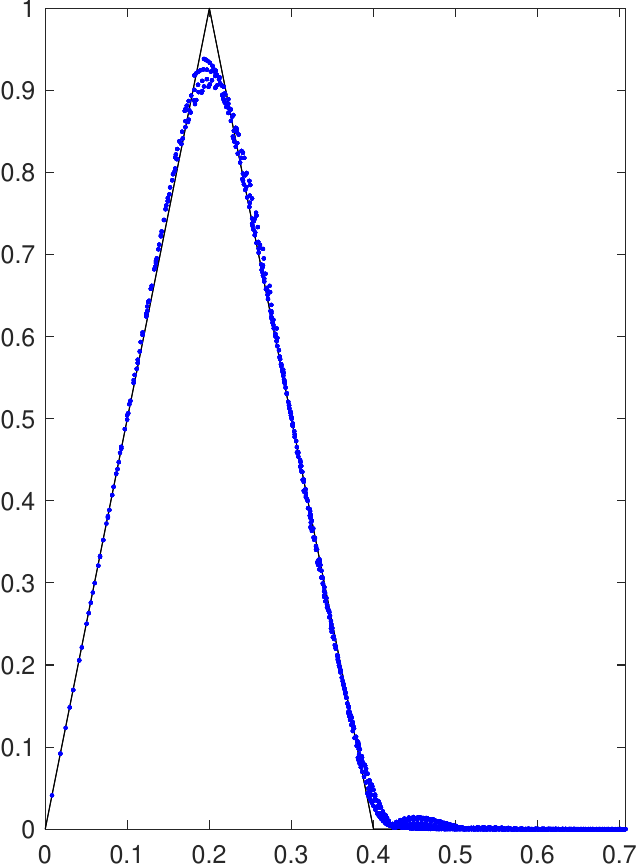}\hfil 
	        \includegraphics[width=0.25\textwidth]{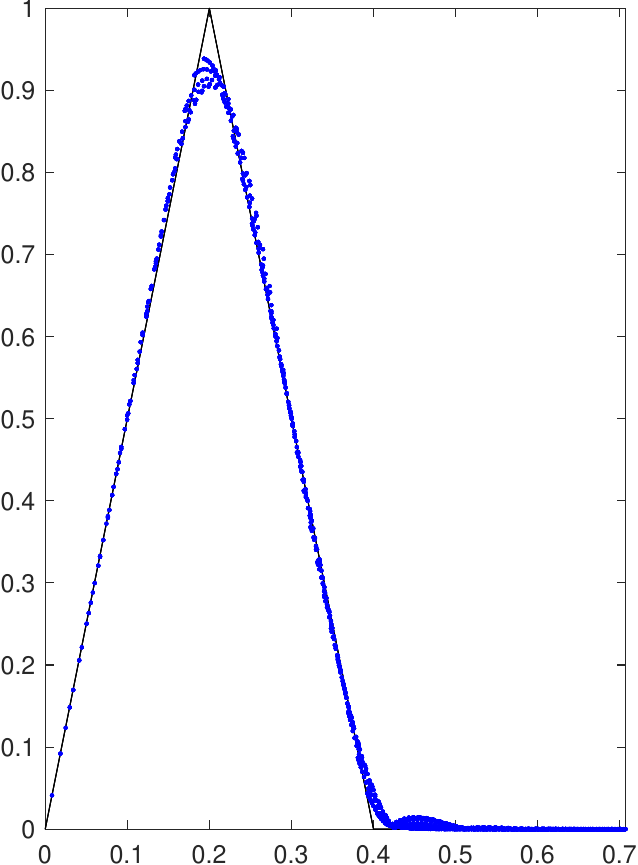}}
\caption{ \label{fig:EG2_EG11_Vortex}Approximation of the stationary vortex using a grid with $64\times64$ (top) and $128\times128$ (bottom) cells at $t=100$ with an AF method using exact evolution(left), $\mathrm{EG}^{\text{quad}}$ (center left), EG2$_{0.7}$ (center right) and EG2$_{0.8,0.2}$ (right).} 
\end{figure}

\subsection{Approximation of discontinuous solutions} \label{sec:disc}
We now investigate the performance of the AF methods with $\mathrm{EG}^{\text{quad}}$, EG2$_{0.7}$ and EG2$_{0.8,0.2}$ evolution operators for the approximation of a discontinuous solution as obtained in Example \ref{ex:4}. The results will be compared with those obtained by the AF method with the  exact evolution operator. All methods lead to accurate approximations even on coarse grids as shown in Figure \ref{fig:EG2cDiscontinuous}. 
\begin{figure}[!htbp]
\centerline{\includegraphics[width=0.3\textwidth]{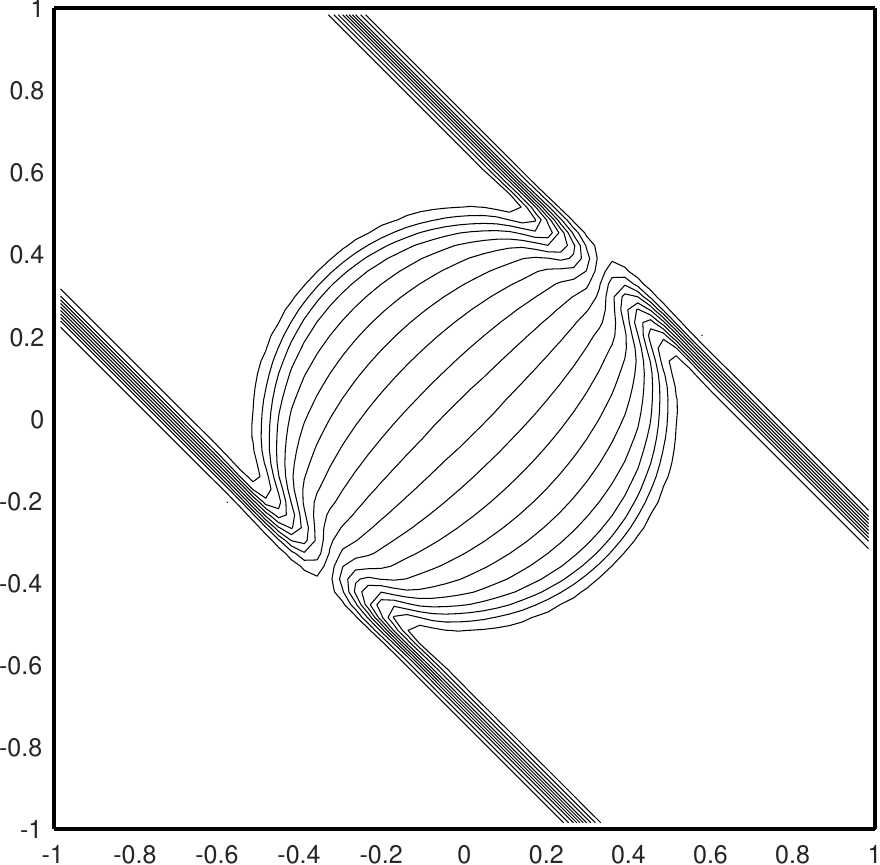}
		    \includegraphics[width=0.3\textwidth]{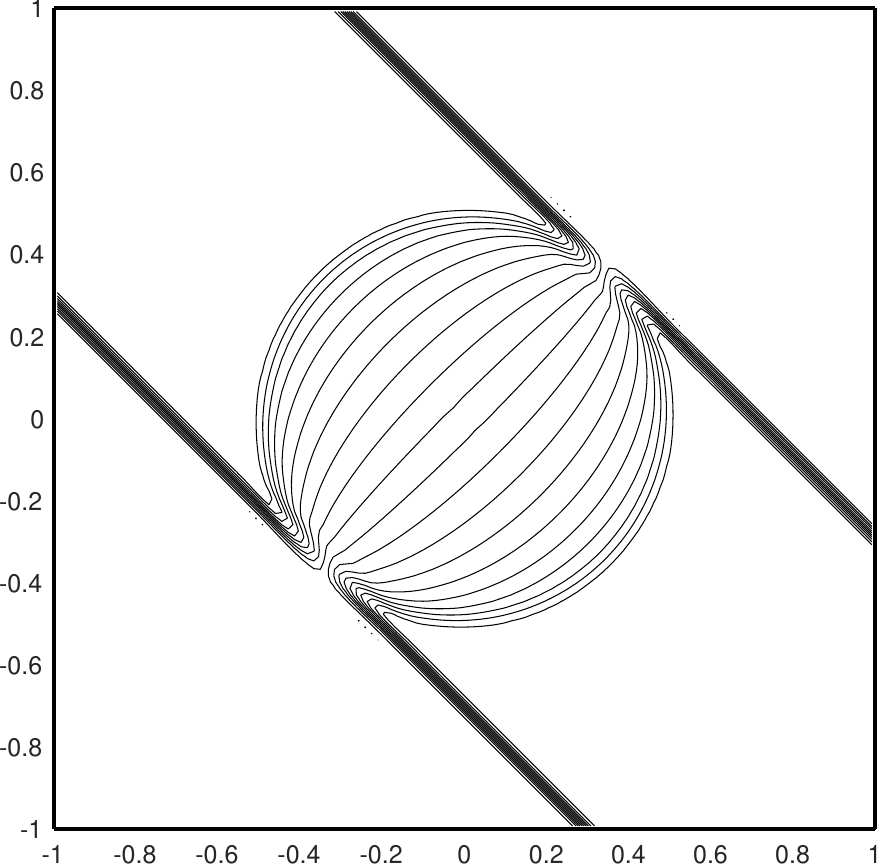}  
		    \includegraphics[width=0.3\textwidth]{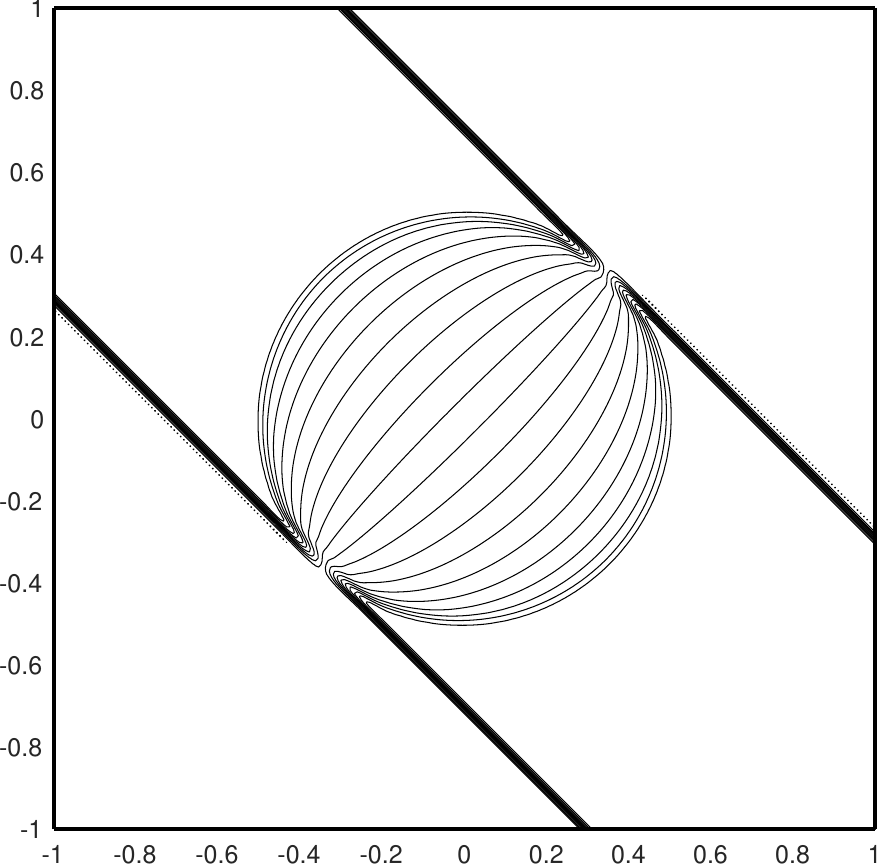}}
\centerline{\includegraphics[width=0.3\textwidth]{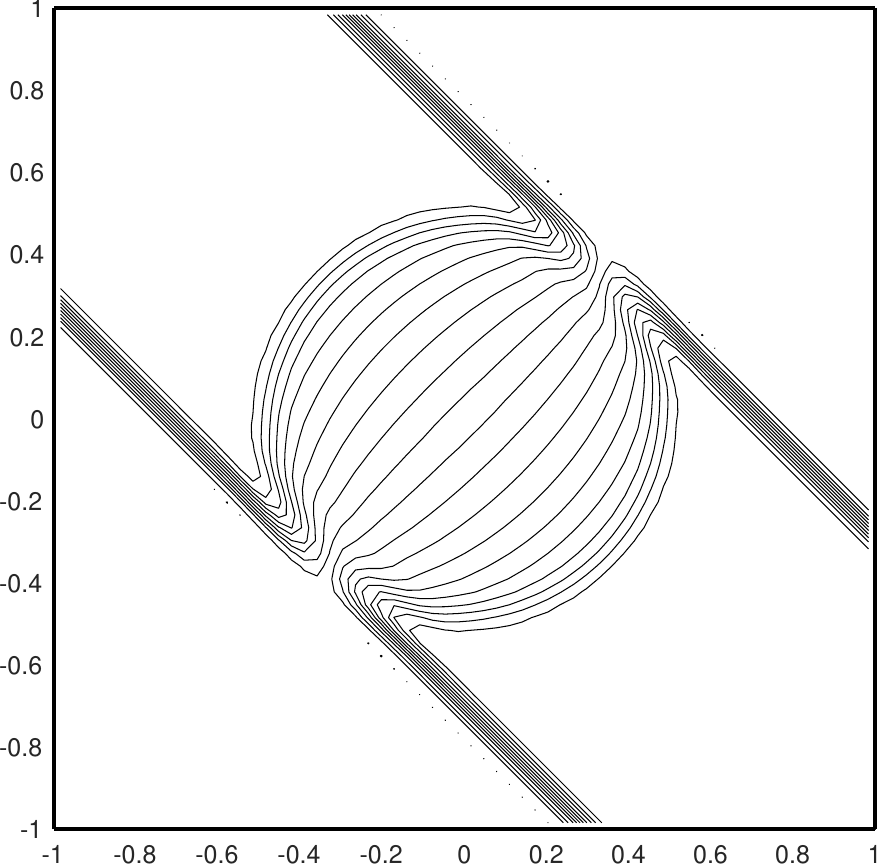} 
            \includegraphics[width=0.3\textwidth]{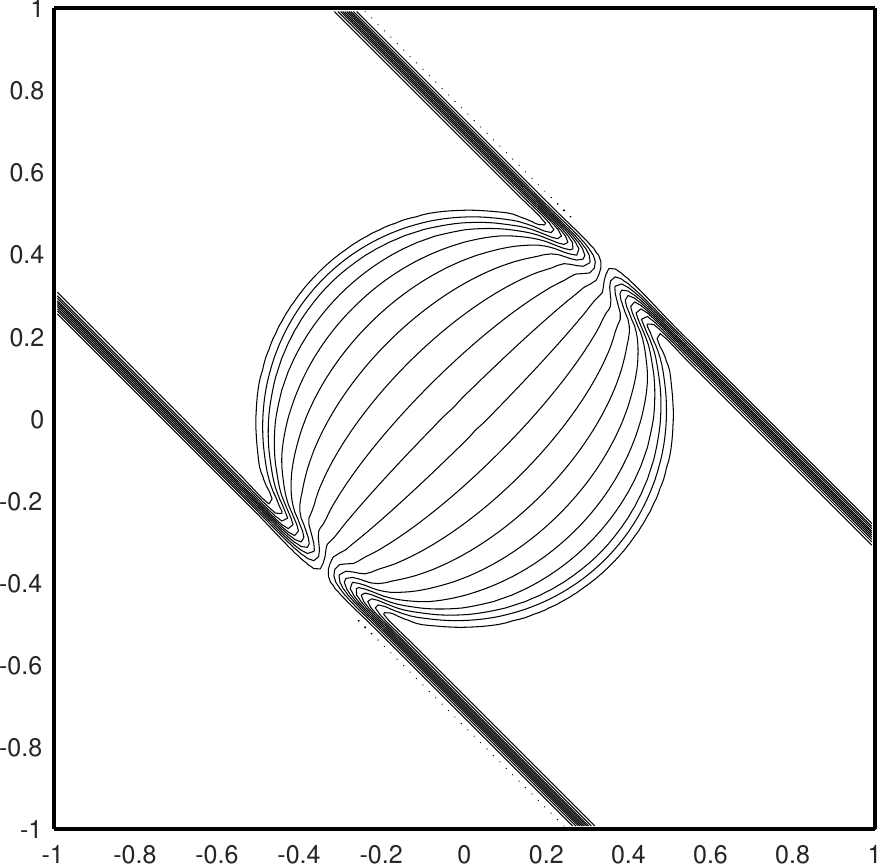} 
            \includegraphics[width=0.3\textwidth]{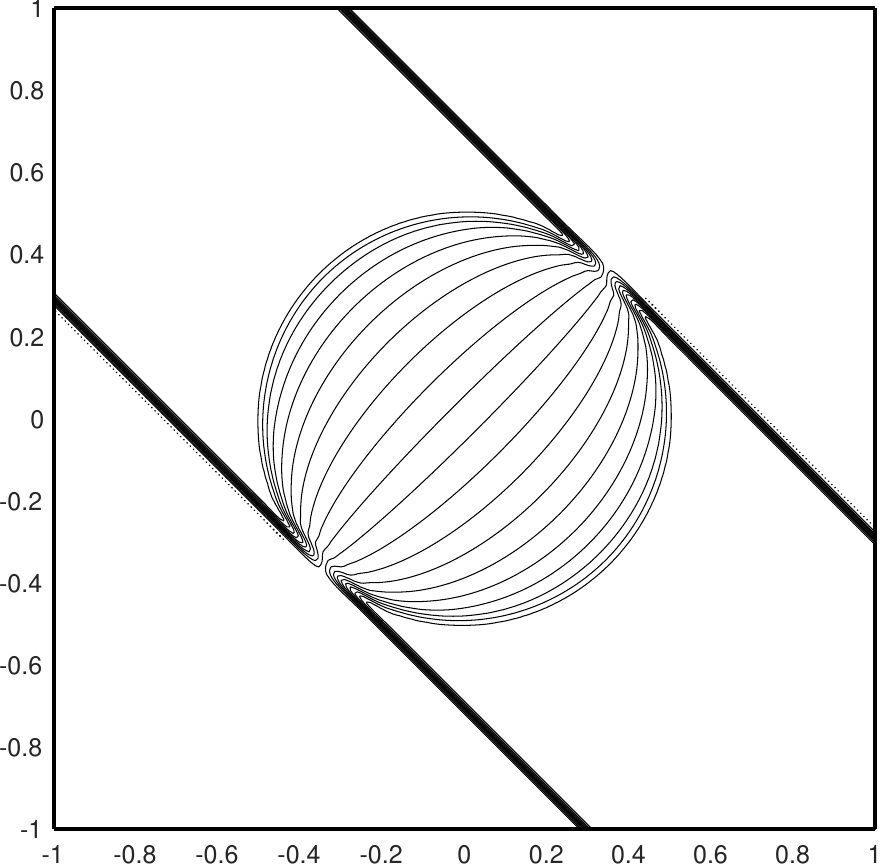}}
\centerline{\includegraphics[width=0.3\textwidth]{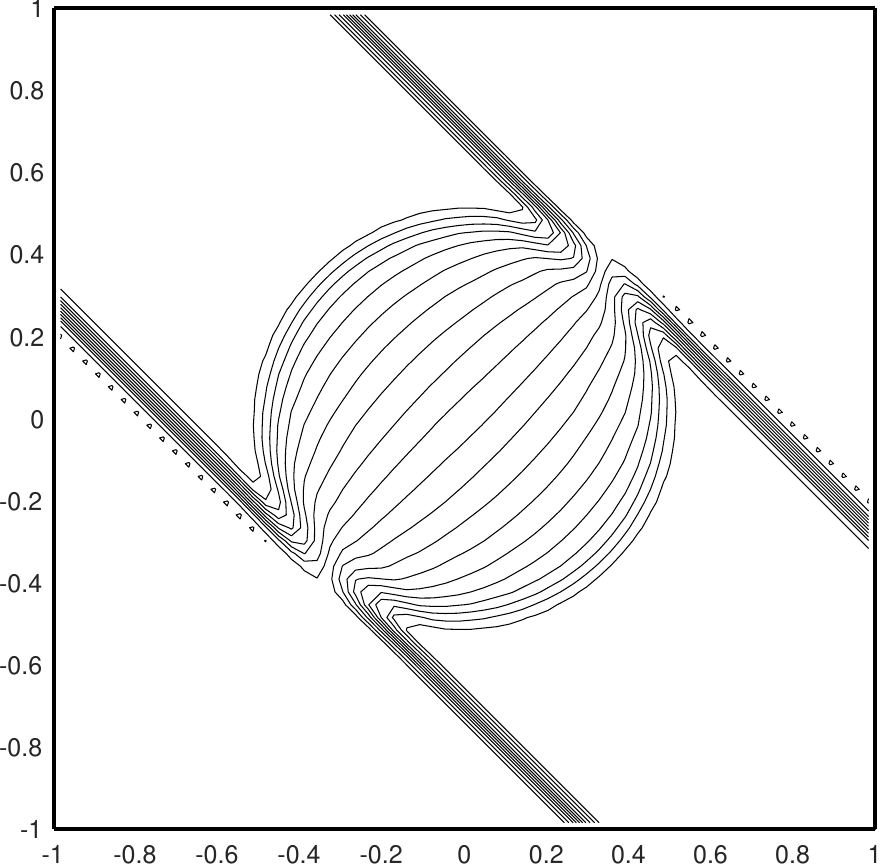}
            \includegraphics[width=0.3\textwidth]{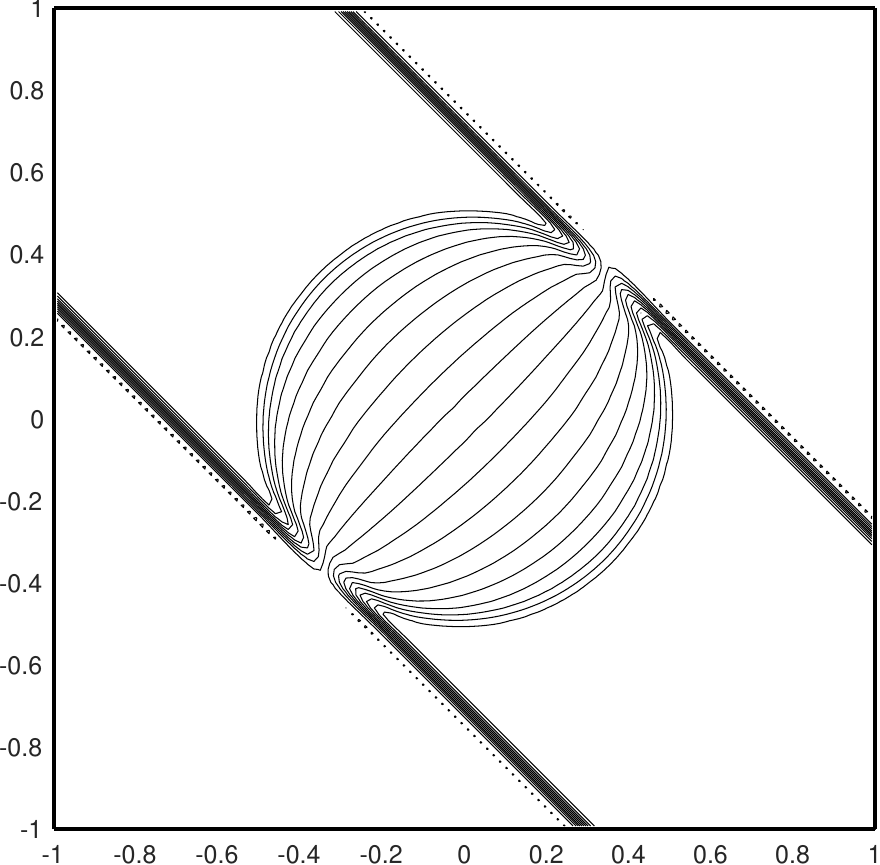} 
            \includegraphics[width=0.3\textwidth]{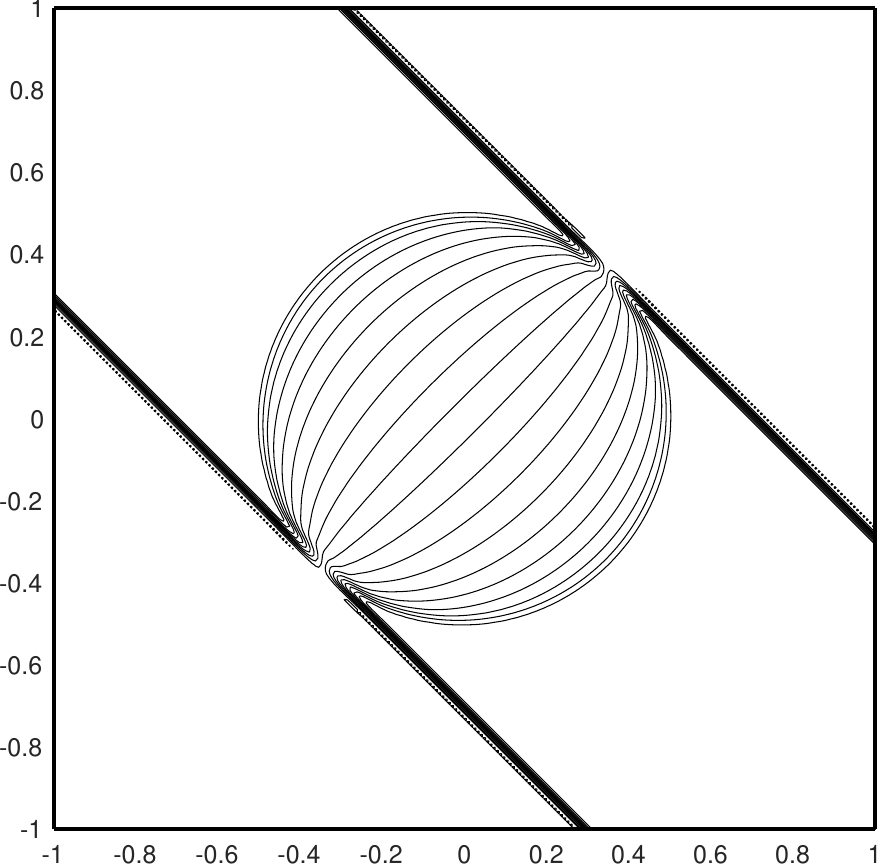}}
\centerline{\includegraphics[width=0.3\textwidth]{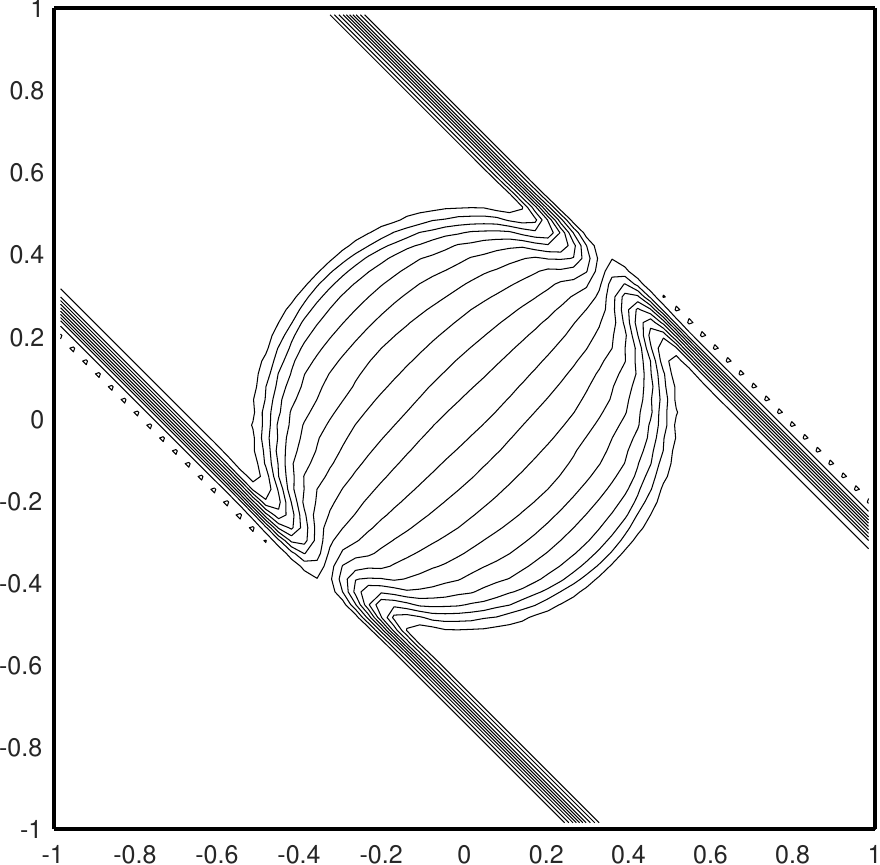} 
            \includegraphics[width=0.3\textwidth]{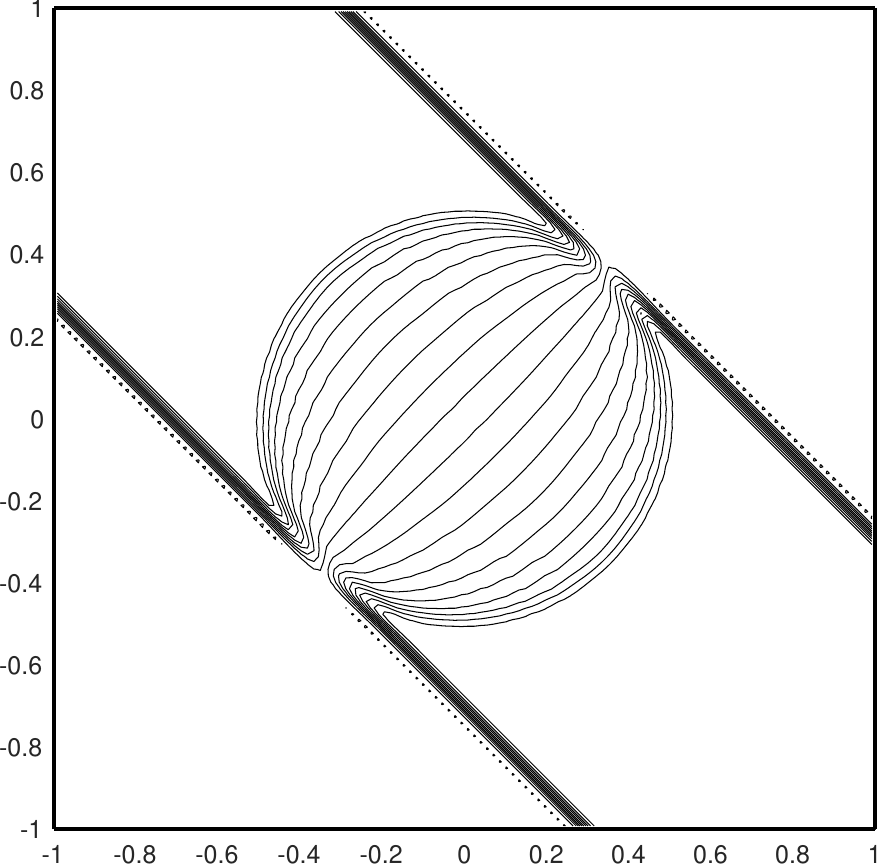}
            \includegraphics[width=0.3\textwidth]{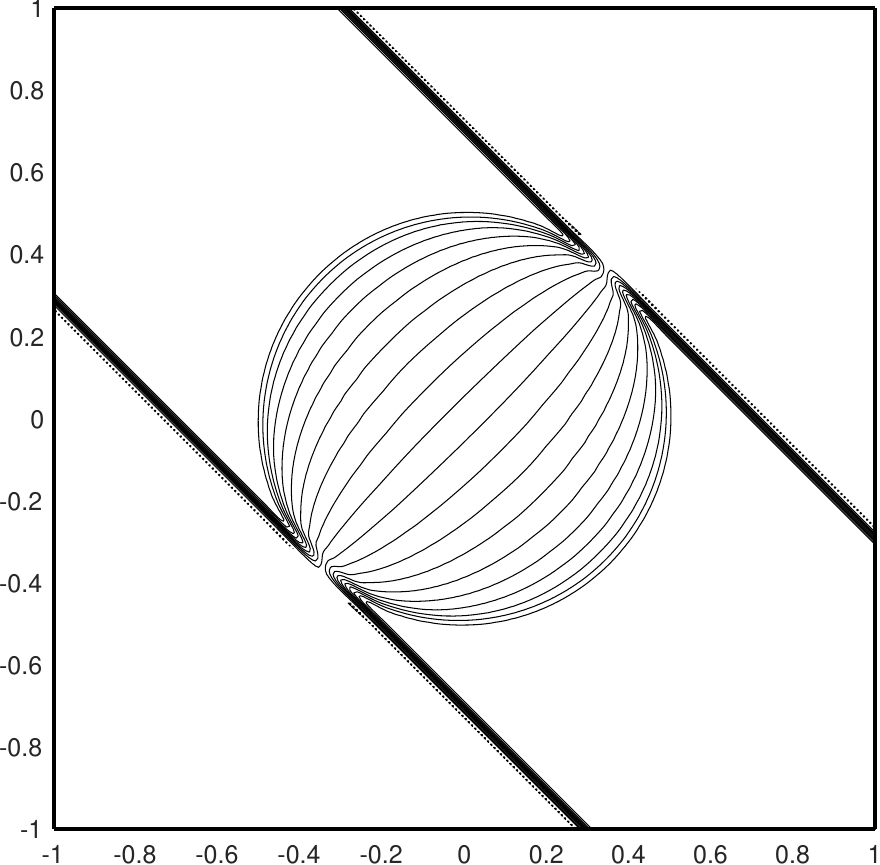}}
\caption{Approximation of discontinuous solution at $t=0.5$ on grids with $64\times 64$ (left), $128\times128$ (middle) and $256\times 256$ (right) cells using the AF method with exact evolution (first row), EG2 (second row), EG2$_{0.7}$ (third row) and $\mathrm{EG}^{\text{quad}}$ (fourth row). All methods use time steps near their stability limit.\label{fig:EG2cDiscontinuous} } 
\end{figure}
Appendix~\ref{app:disc} presents the corresponding results for simplified AF methods using numerical quadratures for the integration along the base of the bicharacteristic cone.

\section{Central Weighted Essentially Non-Oscillatory Reconstruction}\label{sec:CWENO}
In the preceding sections, the stability requirements of the AF method were primarily addressed through the design of new evolution operators. To further enhance stability, we now introduce a novel AFCW method that combines the CWENO reconstruction with the approximate EG operators proposed in the previous sections.

The fundamental advantage of any AF method is its compact, high-order representation of the solution, whose Degrees of Freedom (DoFs) consist of both conservative cell averages and non-conservative point values on edges. By employing the CWENO reconstruction, we effectively increase the DoFs and provide the necessary flexibility to incorporate upwinding information via the evolution operators. Note that we still use the point values at edges as independently evolved variables.  The resulting combination leverages the compact, high-order character of AF and the robust, non-oscillatory limiting of CWENO, thereby delivering the enhanced stability required for hyperbolic systems.
		
To achieve high-order spatial accuracy within each computational cell, we adopt a third-order CWENO reconstruction procedure; see, e.g., \cite{LevyPuppoRusso1999,LevyPuppoRusso2000}. Unlike standard CWENO applications that supply only interface point values for use in numerical fluxes, here we reconstruct, in each mesh cell, a piecewise quadratic polynomial that will be used in the approximate evolution operators. Then in \eqref{recon-formula}, we set $m=5$ with coefficients $C_k$ and basis functions $N_k$ listed in Table \ref{recon}.
\begin{table}[h]
\centering
\caption{Coefficients and basis functions for CWENO reconstruction}
\medskip 
\begin{tabular}{c c c  } 
\hline 
$ k$ & $C_k$ &$N_k$ \\
\hline
0 & $Q_{i,j}$ & $1$\\
1 & $2\omega_0\lb c_1-S_a\rb +\displaystyle\sum_{m=1}^{4}\omega_m\,a_m$ & $\dfrac{\xi}{2}$\\
2 & $2\omega_0\lb c_2-S_b\rb +\displaystyle\sum_{m=1}^{4}\omega_m\,b_m$ & $\dfrac{\eta}{2}$\\
3 & $2\omega_0\,c_3$ & $\dfrac{\xi^2}{4}-\dfrac{1}{12}$\\[2.ex] 
4 & $2\omega_0\,c_4$ & $\dfrac{\eta^2}{4}-\dfrac{1}{12}$\\[2.ex] 
5 & $2\omega_0\,c_5$ & $\dfrac{\xi\,\eta}{4}$\\[1.ex] 
\hline
\end{tabular}
\label{recon}
\end{table}
\ignore{
Assume that the computational domain is partitioned into uniform rectangular cells
\begin{equation*}
\Omega_{i,j}:=[x_{i-\frac{1}{2}},x_{i+ \frac{1}{2}}]\times[y_{j-\frac{1}{2}}, y_{j+ \frac{1}{2}}]
\end{equation*}
centered at $\lb x_i,y_j\rb =\lb \lb x_{i-\frac12}+x_{i+\frac12}\rb /2,\ \lb y_{j-\frac12}+y_{j+\frac12}\rb /2\rb $, with $x_{i+\frac12}-x_{i-\frac12}\equiv\Delta x$ and $y_{j+\frac12}-y_{j-\frac12}\equiv\Delta y$ for all $i,j$. Denote by $Q_{i,j}\lb t\rb $ the cell average of $Q\lb \cdot,\cdot,t\rb $ over $\Omega_{i,j}$
\begin{equation*}
Q_{i,j}\lb t\rb \;\approx\;\frac{1}{\Delta x\,\Delta y}\int_{\Omega_{i,j}} Q\lb x,y,t\rb \,\mathrm{d}x\,\mathrm{d}y,
\end{equation*}
and suppose that all $Q_{i,j}\lb t\rb $ are available at a given time level $t=t^n\ge0$. On each cell we introduce reference coordinates $\lb \xi,\eta\rb \in[-1,1]^2$ via the affine mapping
$$
\xi = \frac{2\lb x - x_i\rb }{\Delta x}, \quad \eta = \frac{2\lb y - y_j\rb }{\Delta y}, \quad \lb \xi, \eta\rb  \in [-1, 1]^2.
$$
We reconstruct a local quadratic polynomial on $\Omega_{i,j}$ at time $t^n$ in a modal basis,
\begin{equation*}
q_{i,j}\lb \xi,\eta\rb =\sum\limits^{6}_{k=1}C_kN_k\lb \xi,\eta\rb 
\end{equation*}
with coefficients $c_k$ and basis functions $N_k$ listed in Table \ref{recon}. The quadratic modes are chosen to have zero cell average so that $C_1=U_{i,j}$.
}
The quadratic modes are chosen to have zero cell average so that $C_0=Q_{i,j}$.
The auxiliary central coefficients $c_\ell$ are computed from neighbouring cell averages
\begin{equation*}
\begin{aligned}
&c_0=Q_{i,j},\quad c_1=\frac{1}{2}\lb Q_{i+1,j}-Q_{i-1,j}\rb ,\quad c_2=\frac{1}{2}\lb Q_{i,j+1}-Q_{i,j-1}\rb ,\\[2pt]
&c_3=\frac{1}{2}\lb Q_{i+1,j}+Q_{i-1,j}\rb -Q_{i,j},\quad c_4=\frac{1}{2}\lb Q_{i,j+1}+Q_{i,j-1}\rb -Q_{i,j},\\[2pt]
&c_5=\frac{1}{4}\lb Q_{i+1,j+1}-Q_{i+1,j-1}-Q_{i-1,j+1}+Q_{i-1,j-1}\rb .
\end{aligned}
\end{equation*}
The parameters $a_m$ and $b_m$ ($m=1,\ldots,4$)  denote the one-sided first differences associated with four directional substencils (east–north, west–north, west–south, east–south):
\begin{equation*}
\begin{aligned}
&a_1=Q_{i+1,j}-Q_{i,j},  &&a_2=Q_{i,j}-Q_{i-1,j},  &&a_3=Q_{i,j}-Q_{i-1,j},  &&a_4=Q_{i+1,j}-Q_{i,j},\\
&b_1=Q_{i,j+1}-Q_{i,j},  && b_2=Q_{i,j+1}-Q_{i,j}, &&b_3=Q_{i,j}-Q_{i,j-1},  &&b_4=Q_{i,j}-Q_{i,j-1}.\\
\end{aligned}
\end{equation*}
We also define the (scaled) averages
\begin{equation*}
S_a=\frac{1}{8}\sum_{m=1}^{4} a_m,\quad
S_b=\frac{1}{8}\sum_{m=1}^{4} b_m.
\end{equation*}
To achieve third-order accuracy in smooth regions and non-oscillatory behaviour near discontinuities, the nonlinear CWENO weights are
\begin{equation*}
\tilde{\omega}_m=\frac{\gamma_m}{\lb \varepsilon+\beta_m\rb ^r},\qquad \omega_m=\frac{\tilde{\omega}_m}{\sum_{s=0}^{4}\tilde{\omega}_s},\qquad m=0,1,2,3,4,
\end{equation*}
with linear weights $\gamma_0=\tfrac{1}{2}$ and $\gamma_1=\gamma_2=\gamma_3=\gamma_4=\tfrac{1}{8}$. Here $\varepsilon>0$ is a small parameter and $r\ge1$ is usually chosen even; in all numerical examples, we take $\varepsilon=10^{-12}$ and $r=2$. By construction, $\omega_m\ge0$ and $\sum_{m=0}^{4}\omega_m=1$.

Finally, the smoothness indicators are
\begin{equation*}
\beta_0=4\lb c_1-S_a\rb ^2+4\lb c_2-S_b\rb ^2+\frac{5}{3}\lb c_3^2+c_4^2\rb +\frac{4}{3}c_5^2,
\end{equation*}
and
\begin{equation*}
\beta_m=a_m^2+b_m^2,\qquad m=1,2,3,4.
\end{equation*}
With these definitions, the polynomial $q_{i,j}$ provides a third-order accurate, non-oscillatory CWENO reconstruction on each cell.
		
\subsection{Investigation of accuracy}
In this section, we present the accuracy study for the AFCW methods  obtained using the third-order CWENO reconstruction in combination with the EG2, $\mathrm{EG}^{\text{quad}}$, and EG2$_{0.8,0.2}$ operators. We consider Examples \ref{ex:1} and \ref{ex:2}, compute the numerical results on a sequence of grids with $64 \times 64$, $128 \times 128$, and $256 \times 256$ cells at times $t=0.1$ and $t=1$, and report the obtained results in Tables \ref{TabCWENOex1t01}–\ref{TabCWENOex2t1}. As one can see, all AFCW methods achieve third-order accuracy and are stable for $\mathrm{CFL}=0.5$. In addition, one can observe that the method employing the $\mathrm{EG}^{\text{quad}}$ operator provides higher accuracy compared to the methods using the EG2 or EG2$_{0.8,0.2}$ operators. In comparison with the errors of the AF methods using the original reconstruction (see Tables \ref{Tab:convergenceProblem1_eg2eg11_t01}–\ref{Tab:convergenceProblem2_eg2eg11_t1}), the errors obtained in the present case are noticeably larger
for CFL=0.5. Note, however, that for CFL=0.7 the AFCW method with EG$^{\rm{quad}}$ operator yields  comparable errors with those reported in Tables~\ref{Tab:convergenceProblem1_eg2eg11_t01}–\ref{Tab:convergenceProblem2_eg2eg11_t1}; see Tables \ref{TabCWENOex1t01_CFL07}–\ref{TabCWENOex2t1_CFL07}. In this case, approximate evolution operators integrate over the base circle of the bicharacteristic cone, taking into account all six neighbouring cells around the midpoint of an edge.

\begin{table}[!ht]
\centering
\caption{Errors measured in the $L_1$-norm and EOC for Example~\ref{ex:1} using AFCW method with EG2, $\mathrm{EG}^{\text{quad}}$ and $\mathrm{EG2}_{0.8,0.2}$ with $\mathrm{CFL} = 0.5$ at $t = 0.1$.}
\label{TabCWENOex1t01}
\vspace*{0.15cm}
\sisetup{scientific-notation = true,round-mode = places}
\renewcommand{\arraystretch}{0.9}
\setlength{\tabcolsep}{4pt}
\begin{tabular}{c*{3}{S[round-precision=2, table-format=1.2e-2]}*{3}{S[round-precision=3, table-format=1.4]}}
\toprule
Res. & \multicolumn{3}{c}{Error in $p$} & \multicolumn{3}{c}{EOC} \\
\cmidrule(lr){2-4} \cmidrule(lr){5-7}
	 & {$\mathrm{EG2}$} & {$\mathrm{EG}^{\text{quad}}$}& {$\mathrm{EG2}_{0.8,0.2}$} & {$\mathrm{EG2}$} & {$\mathrm{EG}^{\text{quad}}$}&$\mathrm{EG2}_{0.8,0.2}$ \\
\midrule
$64\times64$    & \num{2.10e-04} & \num{1.21e-04} & \num{2.04e-04} & {---}  & {---} & {---}\\
$128\times128$  & \num{2.52e-05} & \num{1.42e-05} & \num{2.46e-05} & 3.0584 & 3.0890& 3.0515\\
$256\times256$  & \num{3.03e-06} & \num{1.67e-06} & \num{3.02e-06} & 3.0521 & 3.0835& 3.0245 \\
\bottomrule
\end{tabular}
\end{table}
		
\begin{table}[!ht]
\centering
\caption{Errors measured in the $L_1$-norm and EOC for Example~\ref{ex:1} using AFCW method with EG2, $\mathrm{EG}^{\text{quad}}$ and $\mathrm{EG2}_{0.8,0.2}$ with $\mathrm{CFL} = 0.5$ at $t = 1$.}
\vspace*{0.15cm}
\sisetup{scientific-notation = true,round-mode = places}
\renewcommand{\arraystretch}{0.9}
\setlength{\tabcolsep}{4pt}
\begin{tabular}{c*{3}{S[round-precision=2, table-format=1.2e-2]}*{3}{S[round-precision=3, table-format=1.4]}}
\toprule
Res. & \multicolumn{3}{c}{Error in $p$} & \multicolumn{3}{c}{EOC} \\
\cmidrule(lr){2-4} \cmidrule(lr){5-7}
     & {$\mathrm{EG2}$} & {$\mathrm{EG}^{\text{quad}}$}& {$\mathrm{EG2}_{0.8,0.2}$} & {$\mathrm{EG2}$} & {$\mathrm{EG}^{\text{quad}}$} & {$\mathrm{EG2}_{0.8,0.2}$} \\
\midrule
$64\times64$    & \num{2.2999950248132738e-03} & \num{1.2671479390351835e-03} & \num{2.4027728007210000e-03} & {---} & {---} & {---} \\
$128\times128$  & \num{2.9078094668496765e-04} & \num{1.5914103398114896e-04 } & \num{2.9431065168686234e-04} &2.9836 & 2.9932 & 3.0293  \\
$256\times256$  & \num{3.6430188984395761e-05} & \num{1.9896900068457520e-05}  & \num{3.6808457753564990e-05}& 2.9967 & 2.9997 & 2.9992 \\
\bottomrule
\end{tabular}
\end{table}
		
\begin{table}[!ht]
\centering
\caption{Errors measured in the $L_1$-norm and EOC for Example~\ref{ex:2} using AFCW method with EG2, $\mathrm{EG}^{\text{quad}}$ and $\mathrm{EG2}_{0.8,0.2}$ with $\mathrm{CFL} = 0.5$ at $t = 0.1$.}
\vspace*{0.15cm}
\sisetup{scientific-notation = true,round-mode = places}
\renewcommand{\arraystretch}{0.9}
\setlength{\tabcolsep}{4pt}
\begin{tabular}{c*{3}{S[round-precision=2, table-format=1.2e-2]}*{3}{S[round-precision=3, table-format=1.4]}}
\toprule
Res. & \multicolumn{3}{c}{Error in $u, v$} & \multicolumn{3}{c}{EOC} \\
\cmidrule(lr){2-4} \cmidrule(lr){5-7}
	 & {$\mathrm{EG2}$} & {$\mathrm{EG}^{\text{quad}}$} & {$\mathrm{EG2}_{0.8,0.2}$}& {$\mathrm{EG2}$} & {$\mathrm{EG}^{\text{quad}}$}& {$\mathrm{EG2}_{0.8,0.2}$} \\
\midrule
$64\times64$   & \num{1.65e-04} & \num{9.50e-05} & \num{1.61e-04}& {---} & {---}  & {---} \\
$128\times128$ & \num{1.98e-05} & \num{1.11e-05} & \num{1.93e-05}& 3.0629& 3.0928 & {3.0558}  \\
$256\times256$ & \num{2.38e-06} & \num{1.31e-06} & \num{2.38e-06}& 3.0532 & 3.0846 & {3.0257}  \\
\bottomrule
\end{tabular}
\end{table}
		
\begin{table}[!ht]
\centering
\caption{Errors measured in the $L_1$-norm and EOC for Example~\ref{ex:2} using AFCW method with EG2, $\mathrm{EG}^{\text{quad}}$ and $\mathrm{EG2}_{0.8,0.2}$ with $\mathrm{CFL} = 0.5$ at $t = 1$.}
\label{TabCWENOex2t1}
\vspace*{0.15cm}
\sisetup{scientific-notation = true,round-mode = places}
\renewcommand{\arraystretch}{0.9}
\setlength{\tabcolsep}{4pt}
\begin{tabular}{c*{3}{S[round-precision=2, table-format=1.2e-2]}*{3}{S[round-precision=3, table-format=1.4]}}
\toprule
Res. & \multicolumn{3}{c}{Error in $u, v$} & \multicolumn{3}{c}{EOC} \\
\cmidrule(lr){2-4} \cmidrule(lr){5-7}
	 & {$\mathrm{EG2}$} & {$\mathrm{EG}^{\text{quad}}$}& {$\mathrm{EG2}_{0.8,0.2}$} & {$\mathrm{EG2}$} & {$\mathrm{EG}^{\text{quad}}$}& {$\mathrm{EG2}_{0.8,0.2}$} \\
\midrule
$64\times64$    & \num{1.81e-03} & \num{9.99e-04} & \num{1.89e-03} & {---} & {---} & {---} \\
$128\times128$  & \num{2.29e-04} & \num{1.25e-04} & \num{2.31e-04} & 2.9878& 2.9975& 3.0335 \\
$256\times256$  & \num{2.86e-05} & \num{1.56e-05} & \num{2.89e-05} & 2.9978& 3.0008& 3.0003 \\
\bottomrule
\end{tabular}
\end{table}

\begin{table}[!ht]
\centering
\caption{Errors measured in the $L_1$-norm and EOC for Example~\ref{ex:1} using AFCW method with EG2, $\mathrm{EG}^{\text{quad}}$ and $\mathrm{EG2}_{0.8,0.2}$ with $\mathrm{CFL} = 0.7$ at $t = 0.1$.}
\label{TabCWENOex1t01_CFL07}
\vspace*{0.15cm}
\sisetup{scientific-notation = true,round-mode = places}
\renewcommand{\arraystretch}{0.9}
\setlength{\tabcolsep}{4pt}
\begin{tabular}{c*{3}{S[round-precision=2, table-format=1.2e-2]}*{3}{S[round-precision=3, table-format=1.4]}}
\toprule
Res. & \multicolumn{3}{c}{Error in $p$} & \multicolumn{3}{c}{EOC} \\
\cmidrule(lr){2-4} \cmidrule(lr){5-7}
	 & {$\mathrm{EG2}$} & {$\mathrm{EG}^{\text{quad}}$}& {$\mathrm{EG2}_{0.8,0.2}$} & {$\mathrm{EG2}$} & {$\mathrm{EG}^{\text{quad}}$}&$\mathrm{EG2}_{0.8,0.2}$ \\
\midrule
$64\times64$    & \num{1.6529842877988467e-04} & \num{ 6.3614765155576598e-05} & \num{1.3799492577844946e-04} & {---}  & {---} & {---}\\
$128\times128$  & \num{1.8098757857884977e-05} & \num{5.3714482357923278e-06} &\num{1.6062041393765128e-05} & 3.1911 & 3.5660& 3.1029\\
$256\times256$  & \num{2.0669817936028865e-06} & \num{ 4.7933093650915119e-07} & \num{1.9716934397094635e-06}&  3.1303 & 3.4862& 3.0261 \\
\bottomrule
\end{tabular}
\end{table}
		
\begin{table}[!ht]
\centering
\caption{Errors measured in the $L_1$-norm and EOC for Example~\ref{ex:1} using AFCW method with EG2, $\mathrm{EG}^{\text{quad}}$ and $\mathrm{EG2}_{0.8,0.2}$ with $\mathrm{CFL} = 0.7$ at $t = 1$.}
\vspace*{0.15cm}
\sisetup{scientific-notation = true,round-mode = places}
\renewcommand{\arraystretch}{0.9}
\setlength{\tabcolsep}{4pt}
\begin{tabular}{c*{3}{S[round-precision=2, table-format=1.2e-2]}*{3}{S[round-precision=3, table-format=1.4]}}
\toprule
Res. & \multicolumn{3}{c}{Error in $p$} & \multicolumn{3}{c}{EOC} \\
\cmidrule(lr){2-4} \cmidrule(lr){5-7}
     & {$\mathrm{EG2}$} & {$\mathrm{EG}^{\text{quad}}$}& {$\mathrm{EG2}_{0.8,0.2}$} & {$\mathrm{EG2}$} & {$\mathrm{EG}^{\text{quad}}$} & {$\mathrm{EG2}_{0.8,0.2}$} \\
\midrule
$64\times64$   & \num{1.4932108929948821e-03} & \num{3.0618672398544349e-04} &\num{1.6428883404075796e-03} & {---} & {---} & {---} \\
$128\times128$  & \num{1.8935967776209959e-04} & \num{3.6806198228908385e-05 } & \num{1.9777004045576916e-04} & 2.9792 & 3.0564 & 3.0543 \\
$256\times256$  & \num{2.3666825362751506e-05} & \num{4.4474429936816163e-06}  & \num{2.4246322239157922e-05}& 3.0002 & 3.0489& 3.0280 \\
\bottomrule
\end{tabular}
\end{table}
		
\begin{table}[!ht]
\centering
\caption{Errors measured in the $L_1$-norm and EOC for Example~\ref{ex:2} using AFCW method with EG2, $\mathrm{EG}^{\text{quad}}$ and $\mathrm{EG2}_{0.8,0.2}$ with $\mathrm{CFL} = 0.7$ at $t = 0.1$.}
\vspace*{0.15cm}
\sisetup{scientific-notation = true,round-mode = places}
\renewcommand{\arraystretch}{0.9}
\setlength{\tabcolsep}{4pt}
\begin{tabular}{c*{3}{S[round-precision=2, table-format=1.2e-2]}*{3}{S[round-precision=3, table-format=1.4]}}
\toprule
Res. & \multicolumn{3}{c}{Error in $u, v$} & \multicolumn{3}{c}{EOC} \\
\cmidrule(lr){2-4} \cmidrule(lr){5-7}
	 & {$\mathrm{EG2}$} & {$\mathrm{EG}^{\text{quad}}$} & {$\mathrm{EG2}_{0.8,0.2}$}& {$\mathrm{EG2}$} & {$\mathrm{EG}^{\text{quad}}$}& {$\mathrm{EG2}_{0.8,0.2}$} \\
\midrule
$64\times64$   & \num{1.3033536226893282e-04} & \num{5.0088080753390898e-05} & \num{1.0885022858007386e-04}& {---} & {---}  & {---} \\
$128\times128$ & \num{1.4229102803590538e-05} & \num{4.2215230890196998e-06} & \num{1.2628816121587375e-05} & 3.1953& 3.5686 & 3.1076 \\
$256\times256$ & \num{1.6238280739671434e-06} & \num{3.7653856401327411e-07} & \num{1.5489895927865034e-06} & 3.1314& 3.4869 & 3.0273 \\
\bottomrule
\end{tabular}
\end{table}
		
\begin{table}[!ht]
\centering
\caption{Errors measured in the $L_1$-norm and EOC for Example~\ref{ex:2} using AFCW method with EG2, $\mathrm{EG}^{\text{quad}}$ and $\mathrm{EG2}_{0.8,0.2}$ with $\mathrm{CFL} = 0.7$ at $t = 1$.}
\label{TabCWENOex2t1_CFL07}
\vspace*{0.15cm}
\sisetup{scientific-notation = true,round-mode = places}
\renewcommand{\arraystretch}{0.9}
\setlength{\tabcolsep}{4pt}
\begin{tabular}{c*{3}{S[round-precision=2, table-format=1.2e-2]}*{3}{S[round-precision=3, table-format=1.4]}}
\toprule
Res. & \multicolumn{3}{c}{Error in $u, v$} & \multicolumn{3}{c}{EOC} \\
\cmidrule(lr){2-4} \cmidrule(lr){5-7}
	 & {$\mathrm{EG2}$} & {$\mathrm{EG}^{\text{quad}}$}& {$\mathrm{EG2}_{0.8,0.2}$} & {$\mathrm{EG2}$} & {$\mathrm{EG}^{\text{quad}}$}& {$\mathrm{EG2}_{0.8,0.2}$} \\
\midrule
$64\times64$     & \num{ 1.1772379604963702e-03} & \num{2.4152168513751114e-04} & 1.2951318304316234e-03  & {---} & {---} & {---} \\
$128\times128$  & \num{1.4886691841044361e-04} & \num{2.8937870705542948e-05} & 1.5547672447887987e-04  &2.9833&  3.0611& 3.0583 \\
$256\times256$  & \num{1.8592381854432380e-05} & \num{ 3.4941301275595900e-06 } &1.9047595098989102e-05 &  3.0012& 3.0500&3.0290  \\
\bottomrule
\end{tabular}
\end{table}
		
\subsection{Approximation of the Stationary Vortex}   
In this section, we investigate the numerical behaviour of the AFCW methods for the stationary vortex described in Example~\ref{ex:3}. The corresponding results for EG2, $\mathrm{EG}^{\text{quad}}$ and EG2$_{0.8,0.2}$ on $64 \times 64$ and $128 \times 128$ grids are presented in Figure \ref{fig:WENO-EG2_EG11_Vortex}. On relatively coarse grids, the reconstructed solution exhibits oscillations and a noticeable deviation from the stationary state. As the grid resolution increases, these oscillations gradually diminish, and the overall solution becomes more consistent with the expected stationary pattern. This trend can be attributed to the wider spatial stencil employed in the CWENO reconstruction, which reduces the performance on coarse meshes. Nevertheless,  EG2, $\mathrm{EG}^{\text{quad}}$ and EG2$_{0.8,0.2}$ operators remain stable even at the maximum admissible $\mathrm{CFL}$ number. Moreover, for the same grid resolution, the $\mathrm{EG}^{\text{quad}}$ operator tends to preserve the stationary state more effectively than the EG2 operator and EG2$_{0.8,0.2}$.
\begin{figure}[htbp]
\centerline{\includegraphics[width=0.3\textwidth]{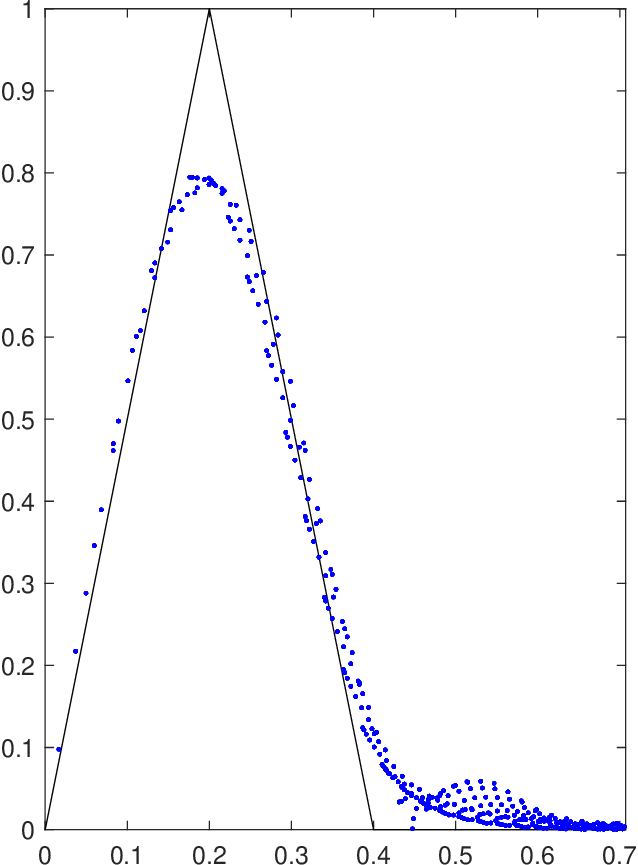}
			\includegraphics[width=0.3\textwidth]{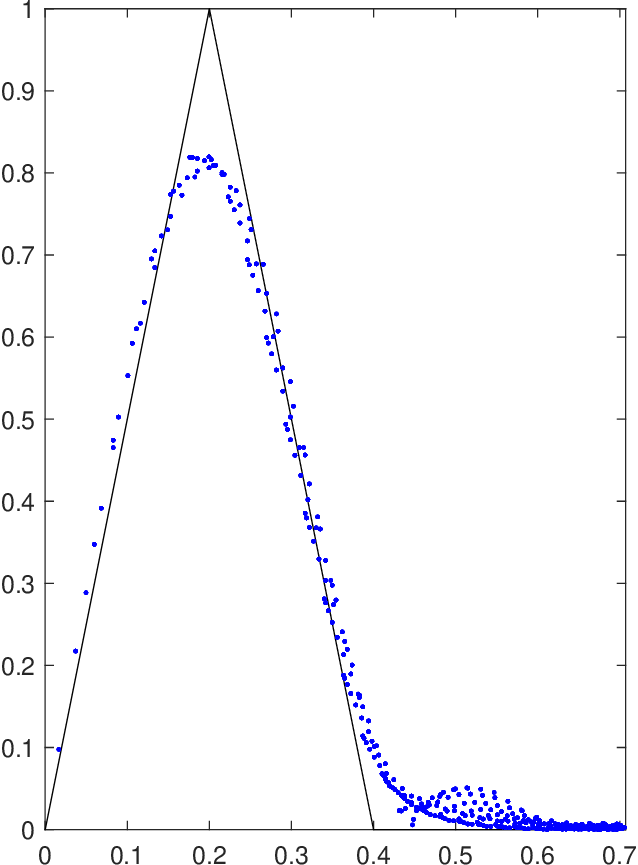}
			\includegraphics[width=0.3\textwidth]{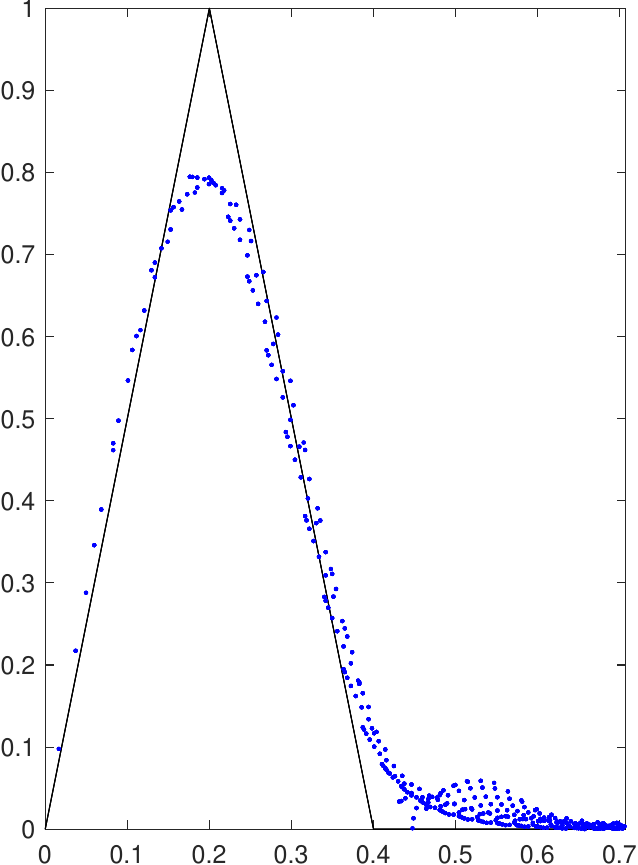}}
\centerline{\includegraphics[width=0.3\textwidth]{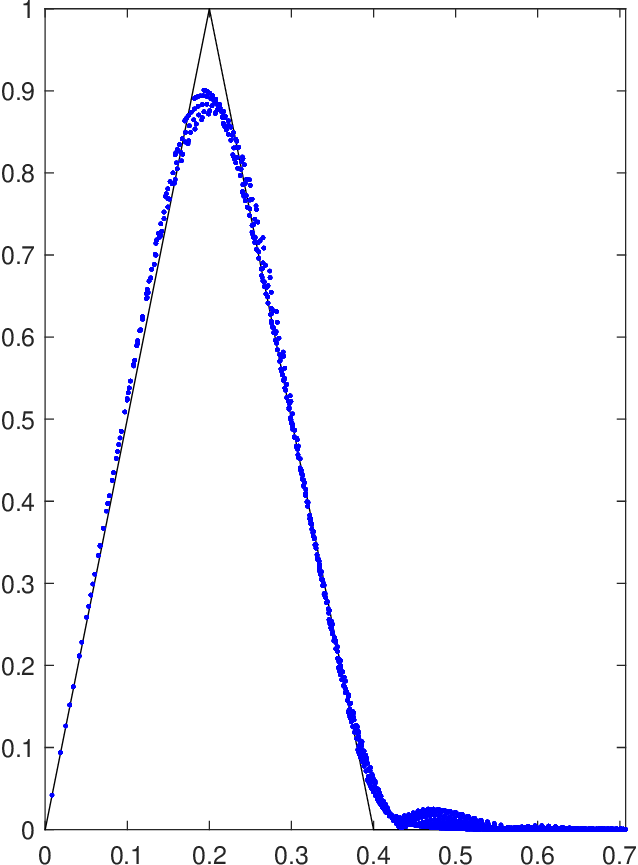}
			\includegraphics[width=0.3\textwidth]{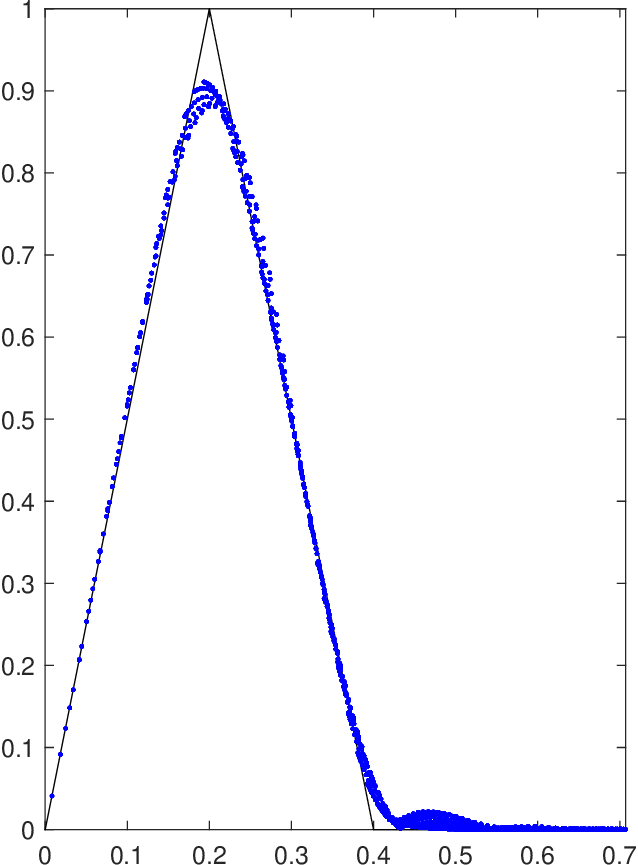}
			\includegraphics[width=0.3\textwidth]{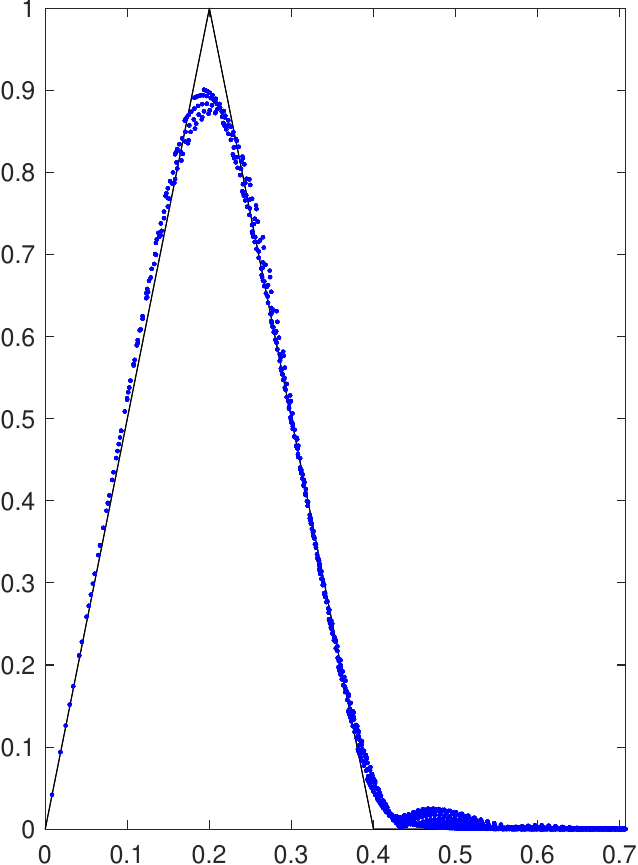}}
\caption{ \label{fig:WENO-EG2_EG11_Vortex}Approximation of the stationary vortex using a grid with $64 \times 64$ (top) and $128 \times 128$ (bottom) cells at $t=100$ with AFCW method using EG2 (left), $\mathrm{EG}^{\text{quad}}$ (middle) and EG2$_{0.8,0.2}$ (right) operators, respectively.}
\end{figure}

\subsection{Approximation of discontinuous solution}
We now present the performance of the AFCW methods with EG2, $\mathrm{EG}^{\text{quad}}$ and EG2$_{0.8,0.2}$ evolution operators for the discontinuous problem, as described in Example~\ref{ex:4}. As illustrated in Figure~\ref{fig:wenoDiscontinuous}, all schemes are capable of capturing the discontinuities sharply and without spurious oscillations. 
The results for  $\mathrm{EG}^{\text{quad}}$ (second row) and  EG$_{0.8,0.2}$ operator (third row) demonstrate performance highly comparable to the AFCW method with EG2 operator, maintaining sharpness at the discontinuities similar to the first row. The benefit of increasing the grid resolution from $64\times64$ (left column) to $256 \times 256$ (right column) is clearly demonstrated, yielding a corresponding increase in the fidelity and resolution of the computed solution features. In comparison with the AF methods presented in Figure~\ref{fig:EG2cDiscontinuous}, the AFCW methods  are slightly smoother near the discontinuities. 
This suggests that the AFCW methods may possess slightly higher numerical dissipation while still preserving the overall sharpness characteristic.

\begin{figure}[htb]
\centerline{\includegraphics[width=0.3\textwidth]{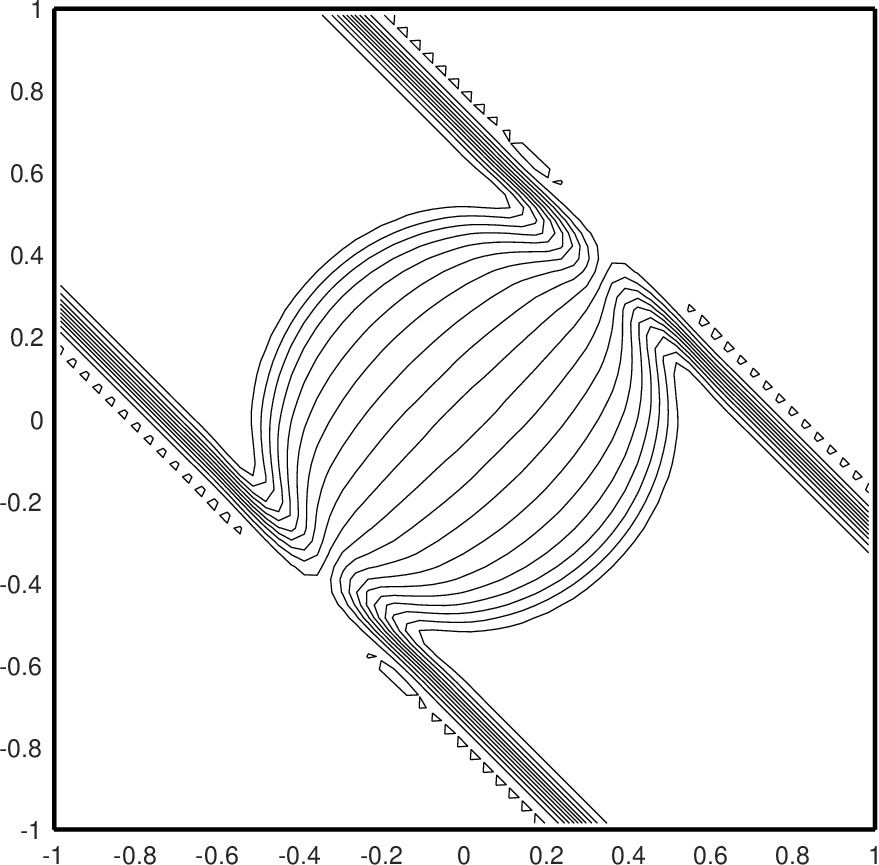} 
			\includegraphics[width=0.3\textwidth]{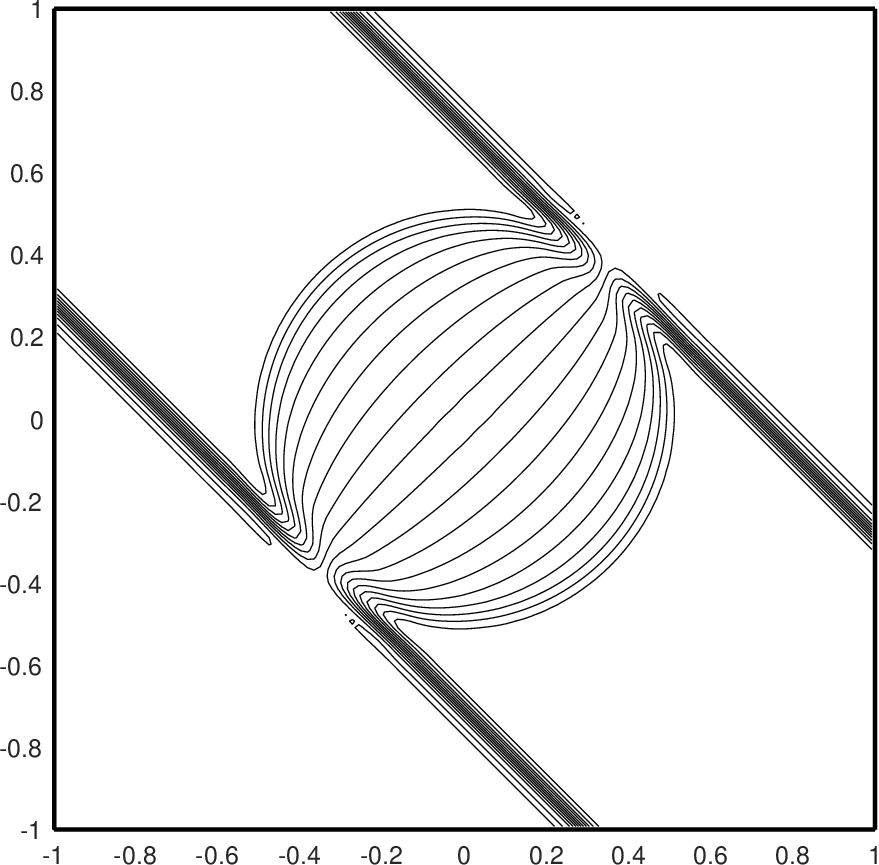}
			\includegraphics[width=0.3\textwidth]{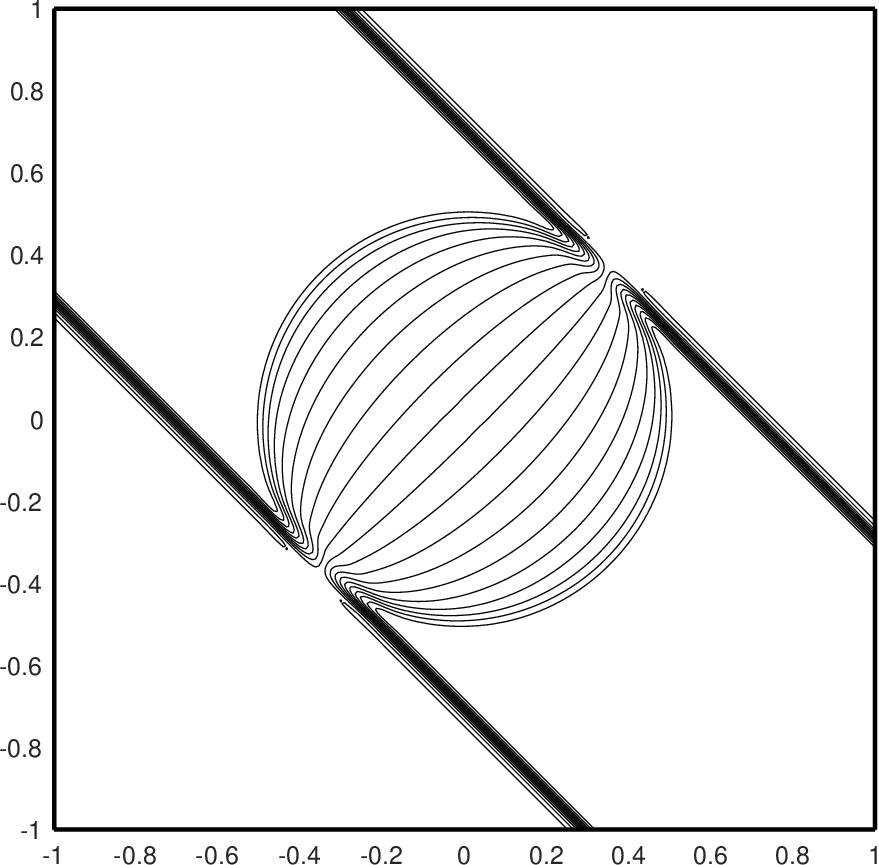}}
\centerline{\includegraphics[width=0.3\textwidth]{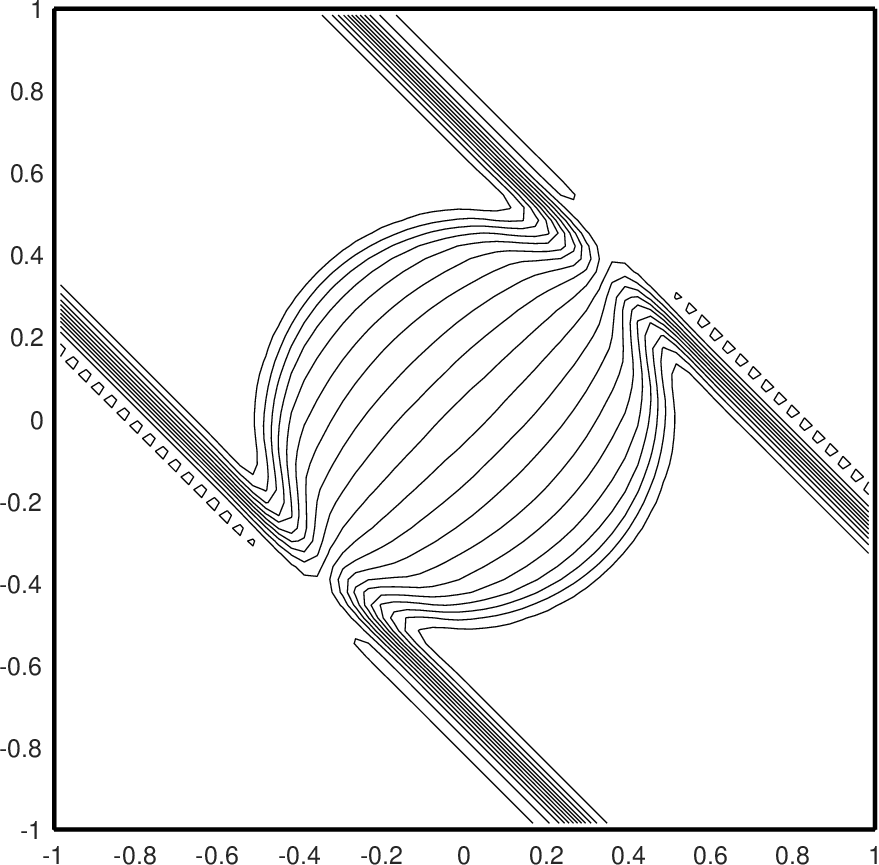}
			\includegraphics[width=0.3\textwidth]{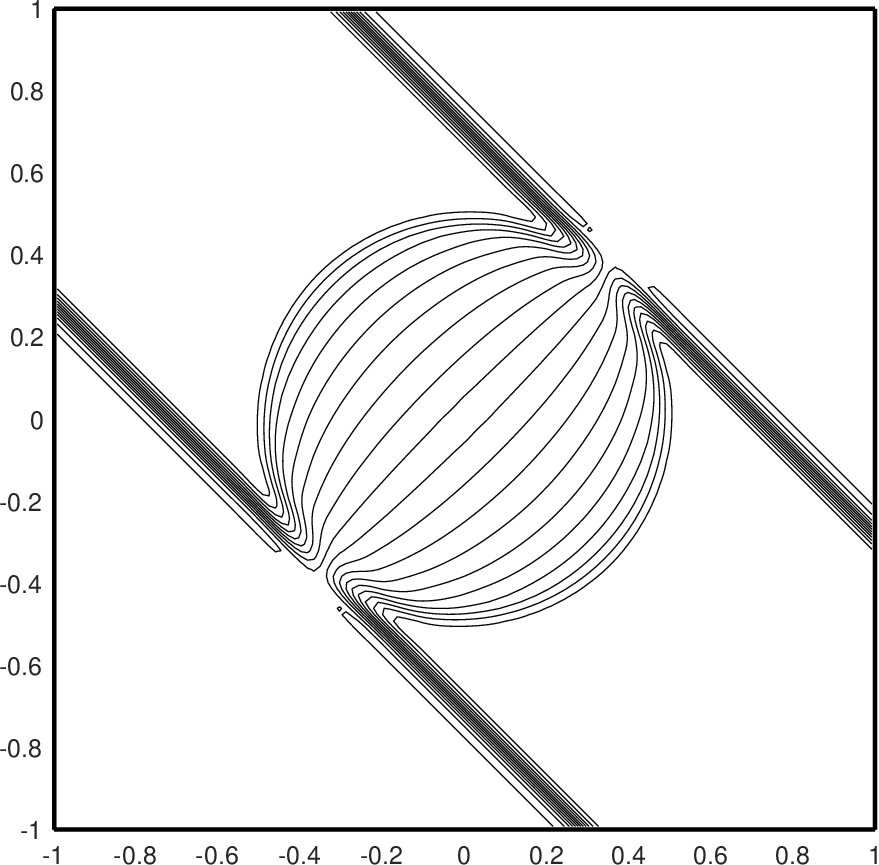} 
			\includegraphics[width=0.3\textwidth]{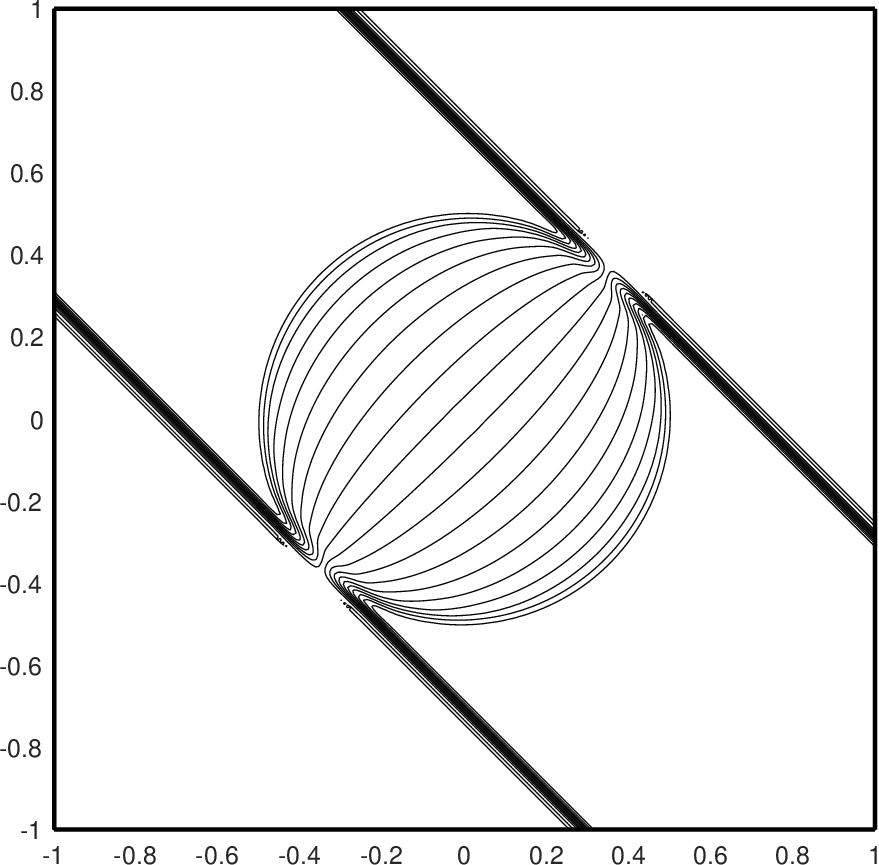}}
\centerline{\includegraphics[width=0.3\textwidth]{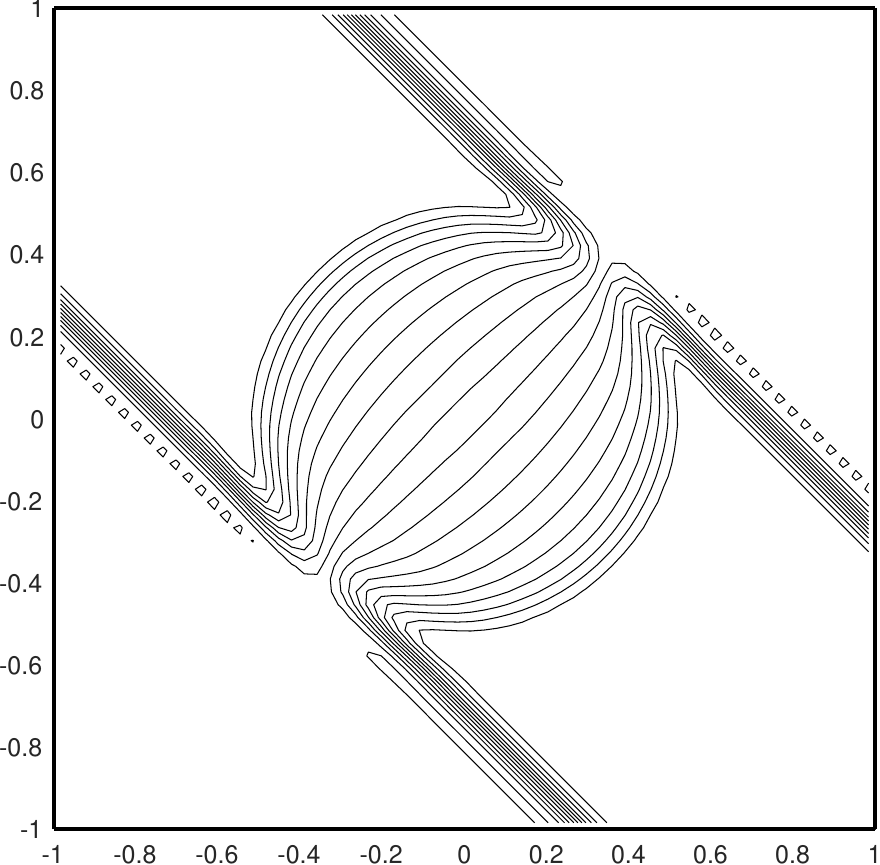}
			\includegraphics[width=0.3\textwidth]{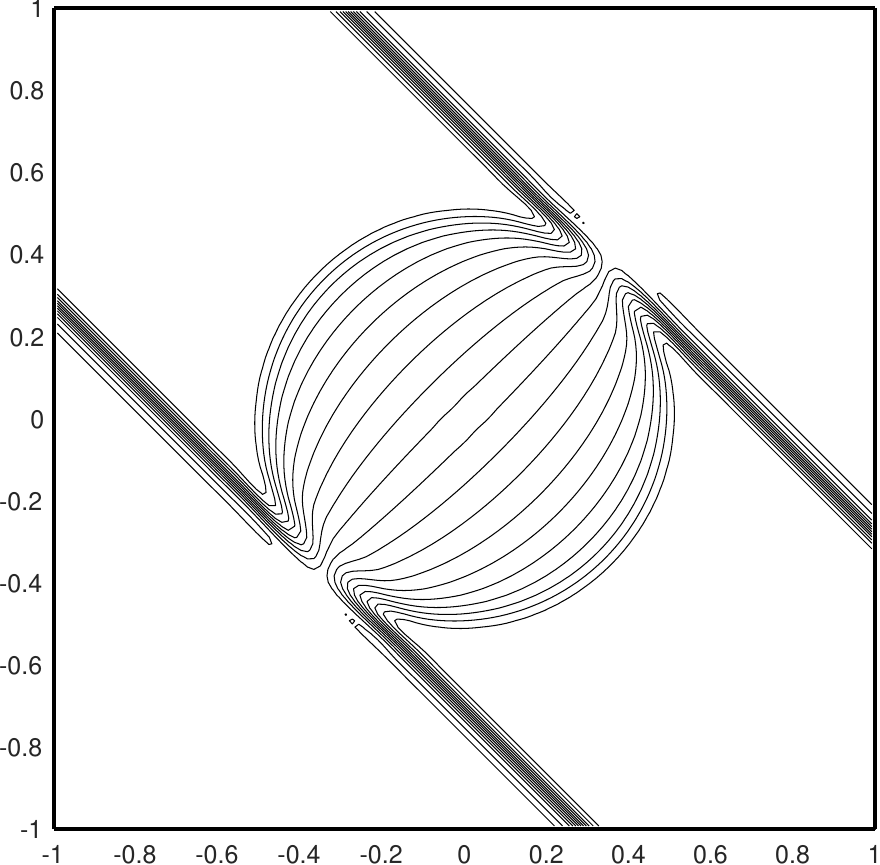} 
			\includegraphics[width=0.3\textwidth]{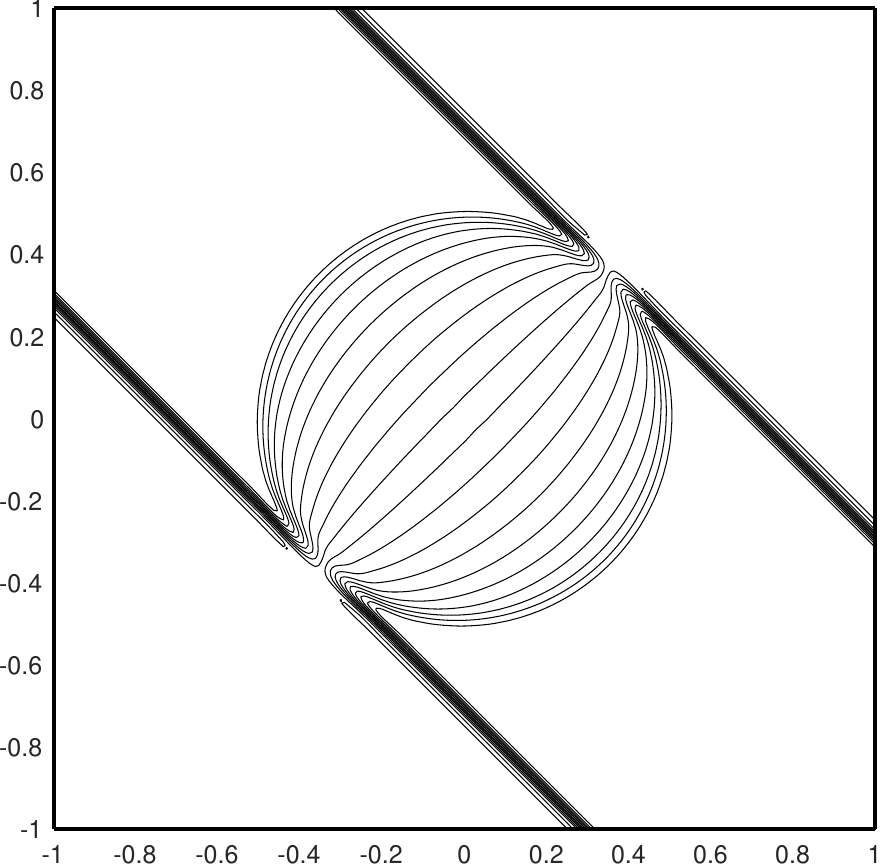}}
\caption{Approximation of discontinuous solution at $t=0.5$ on grids with $64\times 64$ (left), $128\times128$ (middle) and $256\times 256$ (right) cells using AFCW method with EG2 (first row), $\mathrm{EG}^{\text{quad}}$ (second row) and EG2$_{0.8,0.2}$ (third row) operators, respectively. All methods use $\mathrm{CFL} =0.7$.}\label{fig:wenoDiscontinuous}
\end{figure}

\section{Conclusion}

We have developed and analysed several new evolution operators for the fully discrete two-dimensional Active Flux method. The proposed operators are designed to retain the compactness and high accuracy of the AF method while significantly improving stability. We first constructed the  EG$^{\rm{quad}}$  operator that is exact for quadratic plane waves, leading to a third-order accurate method. 
This new formulation replaces the mantle and base integrals in the exact evolution with a third-order approximated evolution operator with a reduced computational complexity without sacrificing accuracy. We then introduced two families of modified operators, EG2$_\delta$ and EG2$_{\delta,\nu}$, obtained by systematically modifying the EG2 formulation through two-circle angular averaging and 
the Taylor-based approximations of the point values. A linear stability analysis has demonstrated a substantial enlargement of the admissible time step: the maximal stable CFL number increases from $0.279$ for EG2 to approximately $0.419$ for EG2$_\delta$ and up to $0.440$ for properly chosen EG2$_{\delta,\nu}$. Accuracy studies for smooth periodic test problems have confirmed that all new variants preserve third-order accuracy. For steady vortex configurations, we have observed that only the scheme based on the exact evolution operator maintains stationary states precisely. In contrast, the approximate operators exhibit minor but non-negligible deviations. We have also investigated 
the influence of different reconstructions and 
compared the  AF schemes using the compact AF reconstruction with the third-order CWENO reconstruction. The latter yields the AFCW methods that have a more compact stencil than standard semi-discrete WENO methods and achieve third-order accuracy with a CFL number up to 0.7.

Future work will focus on extending the present analysis to nonlinear systems, such as the Euler equations of gas dynamics. The AFCW methods offer an efficient way for limiting  and positivity preservation.

\section*{Acknowledgements}
This work was funded by DFG Projects 525800857 and 525853336 funded within Focused Programme SPP 2410 ``Hyperbolic Balance Laws: Complexity, Scales and Randomness". The work of S. Chu was supported in part by the DFG through HE5386/19-3, 27-1. M.L.-M. gratefully acknowledges the support of the Mainz Institute of Multiscale Modeling and Gutenberg Research College. 

\section*{Data availability statement}
The data that support the findings of this study and codes developed by the authors and used to obtain all of the presented numerical results are available from the corresponding author upon reasonable request.

\bibliography{references}

@article {article:CHL2024,
    AUTHOR = {Chudzik, E. and Helzel, C. and
    Luk\'a\v{c}ov\'a-Medvid'ov\'a, M.},
     TITLE = {Active flux methods for hyperbolic systems using the method of
              bicharacteristics},
   JOURNAL = {J. Sci. Comput.},
  FJOURNAL = {Journal of Scientific Computing},
    VOLUME = {99},
      YEAR = {2024},
    NUMBER = {1},
     note = {Paper No. 16.}
}

@article {article:CCH2023,
    AUTHOR = {Calhoun, D. and Chudzik, E. and Helzel, C.},
     TITLE = {The {C}artesian grid active flux method with adaptive mesh
              refinement},
   JOURNAL = {J. Sci. Comput.},
  FJOURNAL = {Journal of Scientific Computing},
    VOLUME = {94},
      YEAR = {2023},
    NUMBER = {3},
     note = {Paper No. 54}
}

@article {article:Roe2021,
    AUTHOR = {Roe, P. L.},
     TITLE = {Designing {CFD} methods for bandwidth---a physical approach},
   JOURNAL = {Comput. \& Fluids},
  FJOURNAL = {Computers \& Fluids. An International Journal},
    VOLUME = {214},
      YEAR = {2021},
     note = {Paper No. 104774.}
}

@article {article:Roe2020,
    AUTHOR = {Roe, P. L.},
     TITLE = {My way: a computational autobiography},
   JOURNAL = {Commun. Appl. Math. Comput.},
  FJOURNAL = {Communications on Applied Mathematics and Computation},
    VOLUME = {2},
      YEAR = {2020},
    NUMBER = {3},
     PAGES = {321--340}
}

@article {article:BHKR2019,
    AUTHOR = {Barsukow, W. and Hohm, J. and Klingenberg,
              C. and Roe, P. L.},
     TITLE = {The active flux scheme on {C}artesian grids and its low {M}ach
              number limit},
   JOURNAL = {J. Sci. Comput.},
  FJOURNAL = {Journal of Scientific Computing},
    VOLUME = {81},
      YEAR = {2019},
    NUMBER = {1},
     PAGES = {594--622}
}

@incollection {article:Roe2018,
    AUTHOR = {Roe, P. L.},
     TITLE = {Did numerical methods for hyperbolic problems take a wrong
              turning?},
 BOOKTITLE = {Theory, numerics and applications of hyperbolic problems.
              {II}},
    SERIES = {Springer Proc. Math. Stat.},
    VOLUME = {237},
     PAGES = {517--534},
 PUBLISHER = {Springer, Cham},
      YEAR = {2018}
}

@article {article:Roe2017,
    AUTHOR = {Roe, P. L.},
     TITLE = {Is discontinuous reconstruction really a good idea?},
   JOURNAL = {J. Sci. Comput.},
  FJOURNAL = {Journal of Scientific Computing},
    VOLUME = {73},
      YEAR = {2017},
    NUMBER = {2-3},
     PAGES = {1094--1114}
}

@article {article:ABK2025,
    AUTHOR = {Abgrall, R. and Barsukow, W. and Klingenberg,
              C.},
     TITLE = {A semi-discrete active flux method for the {E}uler equations
              on {C}artesian grids},
   JOURNAL = {J. Sci. Comput.},
  FJOURNAL = {Journal of Scientific Computing},
    VOLUME = {102},
      YEAR = {2025},
    NUMBER = {2},
     note = {Paper No. 36}
}

@article {article:Barsukow2023,
    AUTHOR = {Barsukow, W.},
     TITLE = {Stationarity preservation properties of the active flux scheme
              on {C}artesian grids},
   JOURNAL = {Commun. Appl. Math. Comput.},
  FJOURNAL = {Communications on Applied Mathematics and Computation},
    VOLUME = {5},
      YEAR = {2023},
    NUMBER = {2},
     PAGES = {638--652}
}

@article {article:Barsukow2021,
    AUTHOR = {Barsukow, W.},
     TITLE = {The active flux scheme for nonlinear problems},
   JOURNAL = {J. Sci. Comput.},
  FJOURNAL = {Journal of Scientific Computing},
    VOLUME = {86},
      YEAR = {2021},
    NUMBER = {1},
     note = {Paper No. 3}
}

@InProceedings{proc:Roe2025,
author={Roe, P. L.},
editor={Tyacke, James C.
and Vadlamani, Nagabhushana Rao},
title={Musings of a Computational Philosopher},
booktitle={Proceedings of the Cambridge Unsteady Flow Symposium 2024},
year={2025},
publisher={Springer Nature Switzerland},
address={Cham},
pages={1--35}
}

@inproceedings{eymann2011active,
  title={Active flux schemes},
  author={Eymann, T. and Roe, P. L.},
  booktitle={49th AIAA Aerospace Sciences Meeting including the New Horizons Forum and Aerospace Exposition},
  year={2011},
  note={Available at {\tt https://arc.aiaa.org/doi/abs/10.2514/6.2011-382}}
}

@inproceedings{eymann2013multidimensional,
  title={Multidimensional active flux schemes},
  author={Eymann, T. and Roe, P. L.},
  booktitle={21st AIAA Computational Fluid Dynamics Conference},
  year={2013},
  note={Available at {\tt https://doi.org/10.2514/6.2013-2940}}
}

@article {article:LMW2000,
    AUTHOR = {Luk\'a\v{c}ov\'a-Medvid'ov\'a, M. and Morton, K. W. and
              Warnecke, G.},
     TITLE = {Evolution {G}alerkin methods for hyperbolic systems in two
              space dimensions},
   JOURNAL = {Math. Comp.},
  FJOURNAL = {Mathematics of Computation},
    VOLUME = {69},
      YEAR = {2000},
    NUMBER = {232},
     PAGES = {1355--1384}
}

@article {article:LSW2002,
    AUTHOR = { Luk\'a\v{c}ov\'a-Medvid'ov\'a, M. and Saibertov\'a, J. and
              Warnecke, G.},
     TITLE = {Finite volume evolution {G}alerkin methods for nonlinear
              hyperbolic systems},
   JOURNAL = {J. Comput. Phys.},
  FJOURNAL = {Journal of Computational Physics},
    VOLUME = {183},
      YEAR = {2002},
    NUMBER = {2},
     PAGES = {533--562}
}

@misc{FR2015,
  author = {Fan, D. and Roe, P. L.},
  title = {Investigations of a New Scheme for Wave Propagation},
  note = {AIAA Aviation Forum, 2015. Available at {\tt https://doi.org/10.2514/6.2015-2449}}
}

@phdthesis{PhD:Fan2017,
  author = {Fan, D.},
  title = {On the Acoustic Component of Active Flux Schemes for Nonlinear Hyperbolic Conservation Laws},
  school = {University of Michigan},
  year = {2017}
}

@article{article:DBK2025,
      title={Active flux methods for hyperbolic conservation laws -- flux vector splitting and bound-preservation}, 
      author={J. Duan and W. Barsukow and C. Klingenberg},
      year={2025},
      journal = {SIAM J.\ Sci.\ Comput.},
      volume = {47},
      pages = {A811--A837}
}

@misc{preprint:ALB2025,
      title={Bound preserving {P}oint-{A}verage-{M}oment {P}olynomi{A}l-interpreted ({PAMPA}) on polygonal meshes}, 
      author={R. Abgrall and Y. Liu and W. Boscheri},
      year={2025},
      eprint={2502.10069},
      archivePrefix={arXiv},
      primaryClass={math.NA},
      note={Preprint available at {\tt https://arxiv.org/abs/2502.10069}}, 
}

@misc{preprint:Barsukow-PG-2025,
title = {Semi-discrete Active Flux as a {P}etrov-{G}alerkin method},
author = {W.~Barsukow},
year = {2025},
eprint = {2508.15017},
archivePrefix = {arXiv},
primaryClass={math.NA},
note = {Preprint available at {\tt https://doi.org/10.48550/arXiv.2508.15017}}
}

@misc{preprint:BKLNOR2025,
title = {Stability of the Active Flux Method in the Framework of Summation-by-Parts Operators},
year = {2025},
author = {W.~Barsukow and C.~Klingenberg and L.~Lechner and J.~Nordstr\"om and S.~Ortleb and H.~Ranocha},
eprint = {2507.11068},
archivePrefix = {arXiv},
primaryClass={math.NA},
note = {Preprint available at {\tt https://doi.org/10.48550/arXiv.2507.11068}} 
}

@misc{preprint:AOL2025,
author = {R. Abgrall and P. Öffner and Y. Liu},
title = {Some new properties of the {PamPa} scheme},
year = {2025},
eprint = {2508.17147},
archivePrefix = {arXiv},
primaryClass={math.NA},
note = {Preprint available at {\tt https://doi.org/10.48550/arXiv.2508.17147}}
}

@article{article:CHK2021,
title = {The {C}artesian {G}rid {A}ctive {F}lux {M}ethod: {L}inear stability and bound preserving limiting},
journal = {Appl. Math. Comput.},
volume = {393},
year = {2021},
author = {E.~Chudzik and C.~Helzel and D.~Kerkmann},
note   ={Paper No. 125501}
}

@phdthesis{phd:Barsukow,
	author       = {W. Barsukow},
	title        = {Low Mach number finite volume methods for the acoustic and {E}uler equations},
	school       = {Universit\"at W\"urzburg},
	year         = {2018},
	address      = {W\"urzburg},        
	type         = {{PhD} thesis},  

}

@misc{preprint:CHP2025,
	title={A fully discrete truly multidimensional Active Flux method for the two-dimensional {E}uler equations}, 
	author={E. Chudzik  and C. Helzel  and A. Porfetye },
	year={2025},
	eprint={2508.06273},
	archivePrefix={arXiv},
	primaryClass={math.NA},
	note={Preprint available at {\tt https://arxiv.org/abs/2508.06273}}, 
}

@misc{article:Barsukow2025,
         author = {W.~Barsukow},
         title = {An Active Flux method for the {E}uler equations based on the exact acoustic evolution operator},
        year = {2025},
        eprint = {2502.05101},
	archivePrefix={arXiv},
	primaryClass={math.NA},
        note = {Preprint available at {\tt https://doi.org/10.48550/arXiv.2502.05101}}
}

@article{LevyPuppoRusso1999,
  author  = {Levy, D. and Puppo, G. and Russo, G.},
  title   = {Central {WENO} schemes for hyperbolic systems of conservation laws},
  journal = {ESAIM: M2AN},
  year    = {1999},
  volume  = {33},
  number  = {3},
  pages   = {547--571},
}

@article{LevyPuppoRusso2000,
  author  = {Levy, D. and Puppo, G. and Russo, G.},
  title   = {Compact central {WENO} schemes for multidimensional conservation laws},
  journal = {SIAM J. Sci. Comput.},
  year    = {2000},
  volume  = {22},
  number  = {2},
  pages   = {656--672},
}

@article{article:LMW2004,
author = {Luk\'a\v{c}ov\'a-Medvid'ov\'a, M. and Morton, K. W. and Warnecke, G.},
title = {Finite volume evolution {G}alerkin methods for hyperbolic systems},
journal = {SIAM J. Sci. Comput.},
volume = {26},
number = {1},
pages = {1--30},
year = {2004},}
\bibliographystyle{plain}

\appendix
\section{Useful lemma}
\begin{lemma}\label{lem}
Let $f\in C^3(\Omega), r>0$, denote
$$
T_n = \frac{2\pi}{n}\sum\limits^{n-1}_{k=0}f\lb r,\frac{2\pi k}{n}\rb,\quad n\in \mathbb{N}.
$$
Then it holds that 
\begin{equation*}
\int^{2\pi}_0f\lb r,\theta\rb \dthe=T_n+\mathcal{O}\lb r^3\rb,\quad {\rm for} \,\, n=4,8.
\end{equation*}
\end{lemma}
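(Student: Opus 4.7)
The plan is to expand $f$ in a Taylor series around the center of the circle and then verify, by direct trigonometric calculation, that both the continuous integral and the $n$-point equispaced sum reproduce the same expansion through order $r^2$, with remainders of order $r^3$.

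First, I would interpret $f(r,\theta)$ as $f\big(x_0 + r\cos\theta,\, y_0 + r\sin\theta\big)$ for some center $(x_0,y_0)\in\Omega$, as in the context of the mantle integrals earlier in the paper. Since $f\in C^3(\Omega)$, the two-dimensional Taylor expansion at $(x_0,y_0)$ gives
\begin{equation*}
f(r,\theta) = f(x_0,y_0) + r\bigl[\cos\theta\, f_x + \sin\theta\, f_y\bigr] + \frac{r^2}{2}\bigl[\cos^2\theta\, f_{xx} + 2\sin\theta\cos\theta\, f_{xy} + \sin^2\theta\, f_{yy}\bigr] + \mathcal{O}(r^3),
\end{equation*}
where the derivatives are evaluated at $(x_0,y_0)$ and the remainder is uniform in $\theta$.

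Next, I would compute the exact integral term by term, using the standard identities $\int_0^{2\pi}\cos\theta\,d\theta = \int_0^{2\pi}\sin\theta\,d\theta = \int_0^{2\pi}\sin\theta\cos\theta\,d\theta = 0$ and $\int_0^{2\pi}\cos^2\theta\,d\theta = \int_0^{2\pi}\sin^2\theta\,d\theta = \pi$, to obtain
\begin{equation*}
\int_0^{2\pi} f(r,\theta)\,d\theta \;=\; 2\pi\, f(x_0,y_0) + \frac{\pi r^2}{2}\bigl(f_{xx}+f_{yy}\bigr) + \mathcal{O}(r^3).
\end{equation*}
Then I would compute $T_n$ for $\theta_k = 2\pi k/n$ using the discrete analogues: writing $\sum_{k=0}^{n-1} e^{im\theta_k} = n$ if $n\mid m$ and $0$ otherwise, one gets $\sum_k\cos\theta_k = \sum_k\sin\theta_k = \sum_k\sin\theta_k\cos\theta_k = 0$ for all $n\ge 3$, together with $\sum_k\cos^2\theta_k = \sum_k\sin^2\theta_k = n/2$ for $n\ge 3$. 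Multiplying the resulting discrete sums by $2\pi/n$ reproduces the same leading expression $2\pi f(x_0,y_0) + \tfrac{\pi r^2}{2}(f_{xx}+f_{yy})$, and the $\mathcal{O}(r^3)$ remainder in the Taylor expansion contributes at most an $\mathcal{O}(r^3)$ term to $T_n$ since the weights sum to $2\pi$.

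The values $n=4$ and $n=8$ both satisfy $n\ge 3$, so all the relevant discrete trigonometric sums vanish or equal $n/2$ exactly, and subtracting the two expansions yields the claimed $\mathcal{O}(r^3)$ estimate. There is no real obstacle here beyond bookkeeping; the only subtle point is ensuring that the identity $\sum_k e^{2i\theta_k}=0$ requires $n\nmid 2$, which excludes $n=2$ but includes $n=4$ and $n=8$. I would remark that the same argument in fact gives $\mathcal{O}(r^4)$ if $f\in C^4(\Omega)$ and $n\ge 4$, since the cubic Taylor terms also integrate to zero both continuously and discretely.
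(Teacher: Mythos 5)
Your proof is correct and follows essentially the same route as the paper's: a second-order Taylor expansion of $f$ about the circle's centre, followed by the observation that the continuous integral and the equispaced sum agree on the constant and quadratic trigonometric moments (the linear and mixed ones vanishing), leaving an $\mathcal{O}(r^3)$ discrepancy. The only difference is cosmetic — you verify the discrete moment identities via roots-of-unity sums valid for all $n\ge 3$, whereas the paper evaluates the four nodes for $n=4$ explicitly and states that $n=8$ is analogous.
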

\begin{proof}
This can be done by Taylor's expansion. Let us expand $f\lb x,y\rb , \lb x,y\rb =\lb r\cos\theta,r\sin\theta\rb $ at $\lb 0,0\rb $
\begin{align*}
f(x,y) =&f(0,0)+x\partial_xf(0,0)+y\partial_{y}f(0,0) \\
	   &+\frac12\left(x^2\partial_{xx}f(0,0)+2xy\partial_{xy}f(0,0)+y^2\partial_{yy}f(0,0)\right)+\mathcal{O}(r^3),
\end{align*}
which yields
\begin{align}\label{A1}
\int^{2\pi}_0f\lb x,y\rb \dthe&=2\pi f\lb 0,0\rb +\frac{r^2}{2}\partial_{xx}f\lb 0,0\rb \int^{2\pi}_0\cos^2\theta\dthe+\frac{r^2}{2}\partial_{yy}f\lb 0,0\rb \int^{2\pi}_0\sin^2\theta\dthe\\\nonumber
				        &=2\pi f\lb 0,0\rb +\frac{\pi r^2}{2}\lb \partial_{xx}+\partial_{yy}\rb f\lb 0,0\rb +\mathcal{O}\lb r^3\rb .
\end{align}
Let n=4, then $f\lb r,\frac{2\pi k}{n}\rb , k = 0,\dots,3$ at $\lb 0,0\rb $
\begin{align*}
f\lb r,0\rb              &=f\lb 0,0\rb +r\partial_{x}f\lb 0,0\rb +\frac12r^2\partial_{xx}f\lb 0,0\rb +\mathcal{O}\lb r^3\rb ,\\
f\lb r,\pi\rb            &=f\lb 0,0\rb -r\partial_{x}f\lb 0,0\rb +\frac12r^2\partial_{xx}f\lb 0,0\rb +\mathcal{O}\lb r^3\rb ,\\
f\lb r,\frac{\pi}{2}\rb  &=f\lb 0,0\rb +r\partial_{y}f\lb 0,0\rb +\frac12r^2\partial_{yy}f\lb 0,0\rb +\mathcal{O}\lb r^3\rb ,\\
f\lb r,\frac{3\pi}{2}\rb &=f\lb 0,0\rb -r\partial_{y}f\lb 0,0\rb +\frac12r^2\partial_{yy}f\lb 0,0\rb +\mathcal{O}\lb r^3\rb .
\end{align*}
Consequently, we have
\begin{align}\label{A2}
T_4&=\frac{\pi}{2}\sum\limits^{3}_{k=0}f\lb \frac{2\pi k}{4}\rb =\frac{\pi}{2}\lb f\lb r,0\rb +f\lb r,\pi\rb +f\lb r,\frac{\pi}{2}\rb +f\lb r,\frac{3\pi}{2}\rb \rb \\\nonumber
&=2\pi f\lb 0,0\rb +\frac{\pi}{2}r^2\lb \partial_{xx}+\partial_{yy}\rb f\lb 0,0\rb +\mathcal{O}\lb r^3\rb .
\end{align}
Comparing \eqref{A1} and \eqref{A2}, yields the desired results. Analogous computations can be applied for $n=8$.
\end{proof}
		
\ignore{\section{Approximate evolution operator for bilinear data (EG10)}	
\subsection{Bilinear data}
Let us consider the following initial data
\begin{eqnarray*}
&&p\lb x,y,0\rb =\left\{\begin{aligned}
		              &p^Rx,&&x>0, \\
				      &0,      &&x\leq 0, 
			          \end{aligned}
			   \right.\quad
u\lb x,y,0\rb =\left\{\begin{aligned}
				&u^Rx,&& x>0, \\
				&0,   && x\leq 0,
			    \end{aligned}
		 \right.\quad
v\lb x,y,0\rb =0.
\end{eqnarray*}
Then the exact solution is given by
\begin{eqnarray}\label{exact-solution-b}
& &p\lb x,y,t\rb =\left\{
\begin{aligned}
& p^Rx-u^Rct,&  &x>ct, \\
& \frac 12\lb p^R-u^R\rb \lb x+ct\rb ,&& -ct< x\leq ct, \\
& 0, && x< -ct,
\end{aligned}
\right.\\\nonumber
&&u\lb x,y,t\rb =\left\{
\begin{aligned}
& u^Rx-p^Rct,&  &x>ct, \\
&\frac 12\lb u^R-p^R\rb \lb x+ct\rb ,&& -ct< x\leq ct, \\
&0, && x< -ct,
\end{aligned}
\right.\\\nonumber
&&v\lb x,y,t\rb =0.
\end{eqnarray}
        
\subsection{Approximate evolution operator}
Approximate evolution operator $E^{\text{bilin}}_{\Delta}$ for the bilinear data which is given as follows:
\begin{subequations}
\begin{align*}
p\lb P\rb =&-p\lb P^\prime\rb +\frac12[p\lb 0\rb +p\lb \frac{\pi}{2}\rb +p\lb \pi\rb +p\lb \frac{3\pi}{2}\rb ]\\
        &-\frac1\pi\int^{2\pi}_0 \lb u\lb Q\lb \theta\rb \rb \cos\theta+v\lb Q\lb \theta\rb \rb \sin\theta \rb \dthe+\mathcal{O}\lb \TS^3\rb ,
\end{align*}
\begin{align*}
u\lb P\rb =&\frac1\pi\int^{2\pi}_0-p\lb Q\rb \cos\theta\dthe+\frac{3}{9\pi-20}\int^{2\pi}_0u\lb Q\rb \lb 2\cos^2\theta-\frac12\rb \dthe\\\nonumber
	 &+\left\lb \frac12-\frac{3\pi}{18\pi-40}\right\rb \left[\frac32\left\lb u\lb 0\rb +u\lb \pi\rb \right\rb -\frac12\left\lb u\lb \frac{\pi}{2}\rb +u\lb \frac{3\pi}{2}\rb \right\rb \right]\\\nonumber
	 &+\frac1\pi\int^{2\pi}_02v\lb Q\rb \sin\theta\cos\theta\dthe+\mathcal{O}\lb \TS^3\rb ,
\end{align*}  
\begin{align*}
v\lb P\rb =&-\frac1\pi\int^{2\pi}_0p\lb Q\rb \sin\theta\dthe+\frac{3}{9\pi-20}\int^{2\pi}_0v\lb Q\rb \lb 2\sin^2\theta-\frac12\rb \dthe\\\nonumber
	 &+\left\lb \frac12-\frac{3\pi}{18\pi-40}\right\rb \left[\frac32\left\lb v\lb \frac{\pi}{2}\rb +v\lb \frac{3\pi}{2}\rb \right\rb -\frac12\left\lb v\lb 0\rb +v\lb \pi\rb \right\rb \right]\\\nonumber
	 &+\frac1\pi\int^{2\pi}_02u\lb Q\rb \sin\theta\cos\theta\dthe+\mathcal{O}\lb \TS^3\rb .
\end{align*}
\end{subequations}

According to the quadratic's derivation process, we adjust the EG2 operator so that its numerical solution in one time step is consistent with the exact solution \eqref{exact-solution-b}.
		
For $p$:
\begin{align*}
\frac1\pi\int^{2\pi}_0p\lb Q\rb \dthe=\frac{2}{\pi} c\TS p^R, 
\end{align*}
according to Lemma \ref{lem}, we take $n=4$, i.e. 
\begin{align*}
\frac1\pi\int^{2\pi}_0p\lb Q\rb \dthe&=\alpha_1\int^{2\pi}_0p\lb Q\rb \dthe+\lb \frac{1}{\pi}-\alpha_1\rb \int^{2\pi}_0p\lb Q\rb \dthe\\
		    &=\alpha_1\int^{2\pi}_0p\lb Q\rb \dthe+\lb \frac{1}{\pi}-\alpha_1\rb \frac{\pi}{2}[p\lb 0\rb +p\lb \frac{\pi}{2}\rb +p\lb \pi\rb +p\lb \frac{3\pi}{2}\rb ]+\mathcal{O}\lb \TS^3\rb \\
			&=\frac{2}{\pi}\alpha_1 p^R c\TS+\lb \frac12-\frac{\pi}{2}\alpha_1\rb  p^Rc\TS\\
			&=\lb \frac{2}{\pi}\alpha_1-\lb \frac12-\frac{\pi}{2}\alpha_1\rb \rb p^Rc\TS\xrightarrow{\alpha_1=0}\frac12p^Rc\TS.
\end{align*} 
		
For $u$:
\begin{equation*}
\frac1\pi\int^{2\pi}_0u\lb Q\rb \lb 2\cos^2\theta-\frac12\rb \dthe=\frac{5}{3\pi}c\TS u^R,
\end{equation*}
		
\begin{align*}		\frac{1}{\pi}\int^{2\pi}_0u\lb Q\rb \lb 2\cos^2\theta-\frac12\rb \dthe&=\alpha_2\int^{2\pi}_0u\lb Q\rb \lb 2\cos^2\theta-\frac12\rb \dthe+\lb \frac{1}{\pi}-\alpha_2\rb \int^{2\pi}_0u\lb Q\rb \lb 2\cos^2\theta-\frac12\rb \dthe\\
			&=\alpha_2\int^{2\pi}_0u\lb Q\rb \lb 2\cos^2\theta-\frac12\rb \dthe+\\
			&~~\lb \frac{1}{\pi}-\alpha_2\rb \frac{\pi}{2}\left[\frac32\left\lb u\lb 0\rb +u\lb \pi\rb \right\rb -\frac12\left\lb u\lb \frac{\pi}{2}\rb +u\lb \frac{3\pi}{2}\rb \right\rb \right]+\mathcal{O}\lb \TS^3\rb \\
			&=\alpha_2 \frac{5}{3\pi}c\TS u^R+\lb \frac{3}{4}-\frac{3\pi}{4}\alpha_2\rb  u^R\lb c\TS\rb ^2\\
			&=\lb \frac{5}{3\pi}\alpha_2-(\frac{3}{4}-\frac{3\pi}{4}\alpha_2\rb  \rb u^Rc\TS\xrightarrow{\alpha_2=\frac{3}{9\pi-20}}\frac12u^Rc\TS.
\end{align*}}

\section{Numerical Integration}
In this Section, we compare the performance of AF methods that employ approximate evolution formulas with exact integration to those that use numerical integration.  The main goal of using numerical integration is to reduce costs. Since multiple point evaluations can be used for $\delta = 1$, we use different deltas here than in Section \ref{sec:accuracy}. For this purpose, we consider the operators EG2$_{1.0}$, EG2$_{1.0,0.2}$, $\widehat{\mathrm{EG2}}_{1.0}$, and $\widehat{\mathrm{EG2}}_{1.0,0.2}$. To this end, we first perform convergence studies using Examples \ref{ex:1} and \ref{ex:2} (see Section \ref{app:accuracy}), and then compare the results for Example \ref{ex:3} (see Section \ref{app:vortex}) and Example \ref{ex:4} (see Section \ref{app:disc}). We use CFL = 0.39 for all calculations.

\subsection{Accuracy Results}\label{app:accuracy}
Tables \ref{Tab:convergenceProblem1_eghat_t01}–\ref{Tab:convergenceProblem1_eghat_t1} show the results for Examples \ref{ex:1} and \ref{ex:2} at $t = 0.1$ and $t = 1$. The methods that compute the evolution formulas using numerical integration are approximately as accurate as those employing exact integration at this point.
\begin{table}[!ht]
\centering
\caption{Errors measured in the $L_1$-norm and EOC for Example~\ref{ex:1} using EG2$_{1.0}$, $\widehat{\mathrm{EG2}}_{1.0}$, EG2$_{1.0,0.2}$ and $\widehat{\mathrm{EG2}}_{1.0,0.2}$ at $t = 0.1$.}
\label{Tab:convergenceProblem1_eghat_t01}
\vspace*{0.15cm}
\sisetup{scientific-notation = true,round-mode = places}
\renewcommand{\arraystretch}{0.9}
\setlength{\tabcolsep}{3pt}
\begin{tabular}{c*{4}{S[round-precision=2, table-format=1.2e-2]}*{4}{S[round-precision=4, table-format=1.4]}}
\toprule
Res. & \multicolumn{4}{c}{Error in $p$} & \multicolumn{4}{c}{EOC} \\
\cmidrule(lr){2-5} \cmidrule(lr){6-9}
	 & {EG2$_{1.0}$} & {$\widehat{\mathrm{EG2}}_{1.0}$} & {EG2$_{1.0,0.2}$} & {$\widehat{\mathrm{EG2}}_{1.0,0.2}$ }
	 & {EG2$_{1.0}$} & {$\widehat{\mathrm{EG2}}_{1.0}$} & {EG2$_{1.0,0.2}$} & {$\widehat{\mathrm{EG2}}_{1.0,0.2}$ } \\
\midrule
64   & \num{2.6595646687295479e-05} & \num{2.5338176315082262e-05} & \num{2.5553697028442009e-05} & \num{2.4434477732955670e-05}
	 & {---} & {---} & {---} & {---} \\
128  & \num{3.3171151146272079e-06} & \num{3.1613047791377671e-06}  & \num{3.1906165449364757e-06} & \num{3.0543530050883854e-06}
	 & 3.0032 & 3.0027  &  3.0016 & 3.0000 \\
256  & \num{4.1319794738418438e-07} & \num{3.9373830001912823e-07}  & \num{3.9762901382736824e-07} & \num{3.8057873413829300e-07}
	 & 3.0050 & 3.0052 &  3.0043 &  3.0046  \\
\bottomrule
\end{tabular}
\end{table}

\begin{table}[!ht]
\centering
\caption{Errors measured in the $L_1$-norm and EOC for Example~\ref{ex:1} using EG2$_{1.0}$, $\widehat{\mathrm{EG2}}_{1.0}$, EG2$_{1.0,0.2}$ and $\widehat{\mathrm{EG2}}_{1.0,0.2}$ at $t = 1$.}
\label{Tab:convergenceProblem1_eghat_t1}
\vspace*{0.15cm}
\sisetup{scientific-notation = true,round-mode = places}
\renewcommand{\arraystretch}{0.9}
\setlength{\tabcolsep}{3pt}
\begin{tabular}{c*{4}{S[round-precision=2, table-format=1.2e-2]}*{4}{S[round-precision=4, table-format=1.4]}}
\toprule
Res. & \multicolumn{4}{c}{Error in $p$} & \multicolumn{4}{c}{EOC} \\
\cmidrule(lr){2-5} \cmidrule(lr){6-9}
	 & {EG2$_{1.0}$} & {$\widehat{\mathrm{EG2}}_{1.0}$} & {EG2$_{1.0,0.2}$} & {$\widehat{\mathrm{EG2}}_{1.0,0.2}$ }
	 & {EG2$_{1.0}$} & {$\widehat{\mathrm{EG2}}_{1.0}$} & {EG2$_{1.0,0.2}$} & {$\widehat{\mathrm{EG2}}_{1.0,0.2}$ } \\
\midrule
64   & \num{3.2187267146899527e-04} & \num{3.2256840176350096e-04} & \num{3.1644285862701595e-04} & \num{3.1803944365606934e-04}
	 & {---} & {---} & {---} & {---} \\
128  & \num{4.0540010709008755e-05} & \num{4.0650678392636756e-05} & \num{3.9835835877694806e-05} & \num{4.0055852010725435e-05}
	 & 2.9891 & 2.9883  & 2.9898  & 2.9891 \\
256  & \num{5.0698713477737136e-06} & \num{5.0844860373556694e-06} & \num{4.9821559898756295e-06} & \num{5.0101706139213363e-06}
	 & 2.9993 & 2.9991 & 2.9992 & 2.9991    \\
\bottomrule
\end{tabular}
\end{table}

\begin{table}[!ht]
\centering
\caption{Errors measured in the $L_1$-norm and EOC for Example \ref{ex:2} using EG2$_{1.0}$, $\widehat{\mathrm{EG2}}_{1.0}$, EG2$_{1.0,0.2}$ and $\widehat{\mathrm{EG2}}_{1.0,0.2}$ at $t = 0.1$.}
\label{Tab:convergenceProblem2_eghat_t01}
\vspace*{0.15cm}
\sisetup{scientific-notation = true,round-mode = places}
\renewcommand{\arraystretch}{0.9}
\setlength{\tabcolsep}{3pt}
\begin{tabular}{c*{4}{S[round-precision=2, table-format=1.2e-2]}*{4}{S[round-precision=4, table-format=1.4]}}
\toprule
Res. & \multicolumn{4}{c}{Error in $u,v$} & \multicolumn{4}{c}{EOC} \\
\cmidrule(lr){2-5} \cmidrule(lr){6-9}
	 & {EG2$_{1.0}$} & {$\widehat{\mathrm{EG2}}_{1.0}$} & {EG2$_{1.0,0.2}$} & {$\widehat{\mathrm{EG2}}_{1.0,0.2}$ }
	 & {EG2$_{1.0}$} & {$\widehat{\mathrm{EG2}}_{1.0}$} & {EG2$_{1.0,0.2}$} & {$\widehat{\mathrm{EG2}}_{1.0,0.2}$ } \\
\midrule
64   & \num{2.0878787233222443e-05} & \num{2.1947776280825189e-05} & \num{2.0862726511601006e-05} & \num{2.1931796316003220e-05}
	 & {---} & {---} & {---} & {---} \\
128  & \num{2.5954606789910305e-06} & \num{2.7318019500385547e-06 } & \num{2.5971047900655746e-06} & \num{2.7328907645635087e-06}
	 &3.0080  & 3.0061  &  3.0060  & 3.0045 \\
256  & \num{3.2301144135668633e-07} & \num{3.4017867863900450e-07} & \num{3.2352632177264750e-07} & \num{3.4059323905861161e-07}
	 & 3.0063 & 3.0055 &  3.0049  &  3.0043 \\
\bottomrule
\end{tabular}
\end{table}

\begin{table}[!ht]
\centering
\caption{Errors measured in the $L_1$-norm and EOC for Example~\ref{ex:2} using EG2$_{1.0}$, $\widehat{\mathrm{EG2}}_{1.0}$, EG2$_{1.0,0.2}$ and $\widehat{\mathrm{EG2}}_{1.0,0.2}$ at $t = 1$.}
\label{Tab:convergenceProblem2_eghat_t1}
\vspace*{0.15cm}
\sisetup{scientific-notation = true,round-mode = places}
\renewcommand{\arraystretch}{0.9}
\setlength{\tabcolsep}{3pt}
\begin{tabular}{c*{4}{S[round-precision=2, table-format=1.2e-2]}*{4}{S[round-precision=4, table-format=1.4]}}
\toprule
Res. & \multicolumn{4}{c}{Error in $u,v$} & \multicolumn{4}{c}{EOC} \\
\cmidrule(lr){2-5} \cmidrule(lr){6-9}
	 & {EG2$_{1.0}$} & {$\widehat{\mathrm{EG2}}_{1.0}$} & {EG2$_{1.0,0.2}$} & {$\widehat{\mathrm{EG2}}_{1.0,0.2}$ }
	 & {EG2$_{1.0}$} & {$\widehat{\mathrm{EG2}}_{1.0}$} & {EG2$_{1.0,0.2}$} & {$\widehat{\mathrm{EG2}}_{1.0,0.2}$ } \\
\midrule
64   & \num{2.5367748805894721e-04} & \num{2.5409223749359318e-04} & \num{2.4932560517675490e-04} & \num{2.5046527476316942e-04}
	 & {---} & {---} & {---} & {---} \\
128  & \num{3.1872517874105865e-05 } & \num{3.1949024659726255e-05} & \num{3.1313024720884184e-05} & \num{3.1475675994757708e-05}
	 & 2.9926 & 2.9915  & 2.9932  & 2.9923 \\
256  & \num{3.9830684227024555e-06 } & \num{3.9940750604201853e-06} & \num{3.9138665899952770e-06} & \num{3.9353721351101768e-06}
	 &3.0004  & 2.9998 & 3.0001 &  2.9997  \\
\bottomrule
\end{tabular}
\end{table}

\subsection{Approximation of the Stationary Vortex}\label{app:vortex}
Figure \ref{fig:appendix_Vortex} shows the results for Example \ref{ex:3} at $t = 100$. The performance of the methods that compute the evolution formulas using numerical integration is approximately the same as for methods employing exact integration.
\begin{figure}[htb]
\centerline{\includegraphics[width=0.25\textwidth]{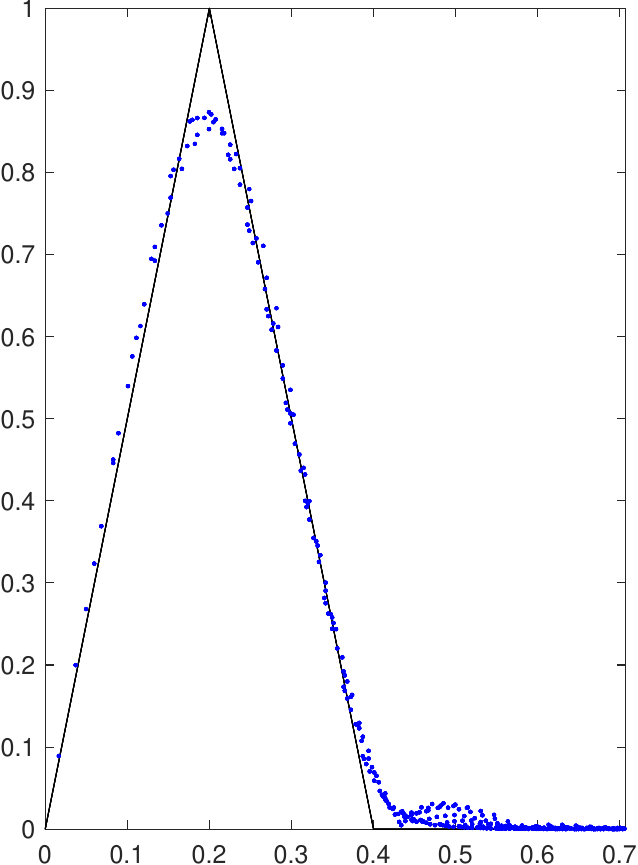}
            \includegraphics[width=0.25\textwidth]{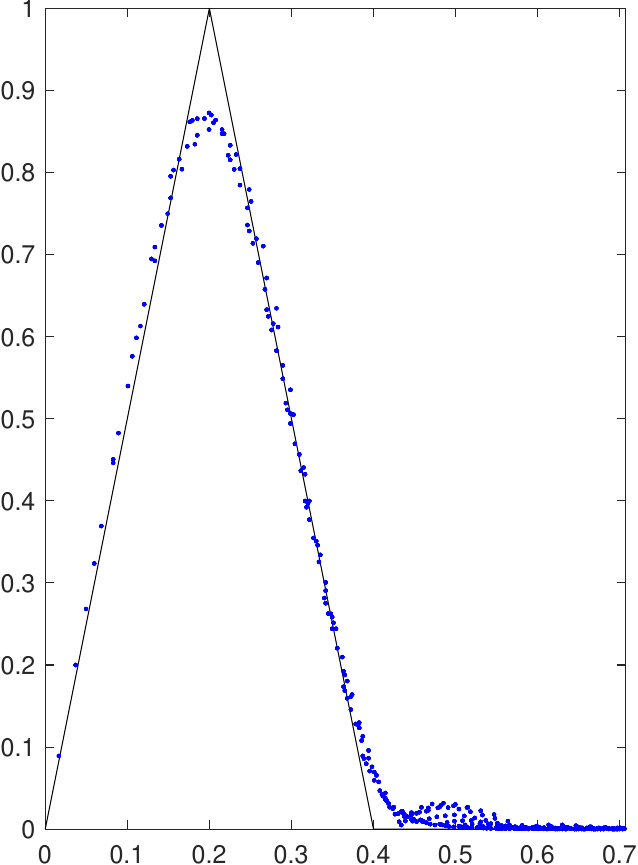}\hfil  
            \includegraphics[width=0.25\textwidth]{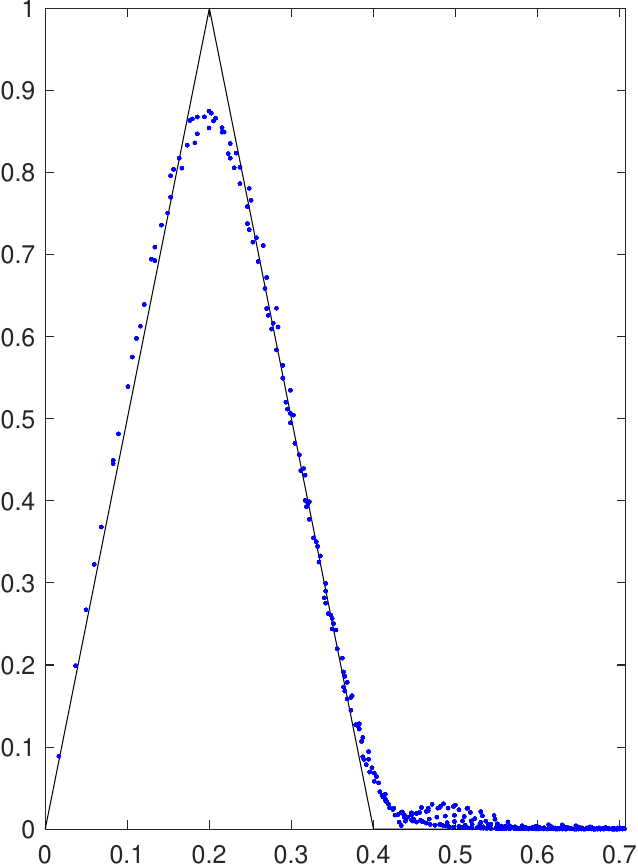}\hfil 
            \includegraphics[width=0.25\textwidth]{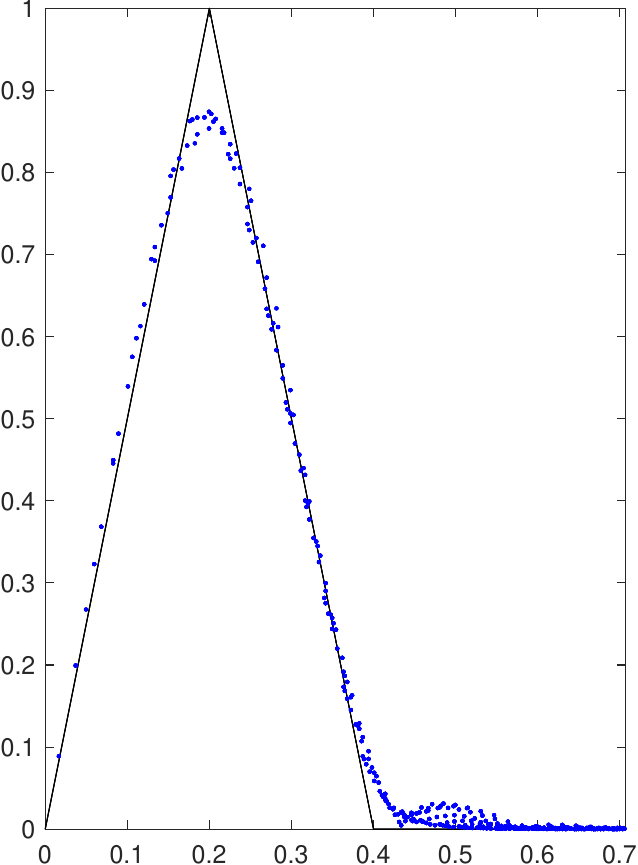}}

\centerline{\includegraphics[width=0.25\textwidth]   {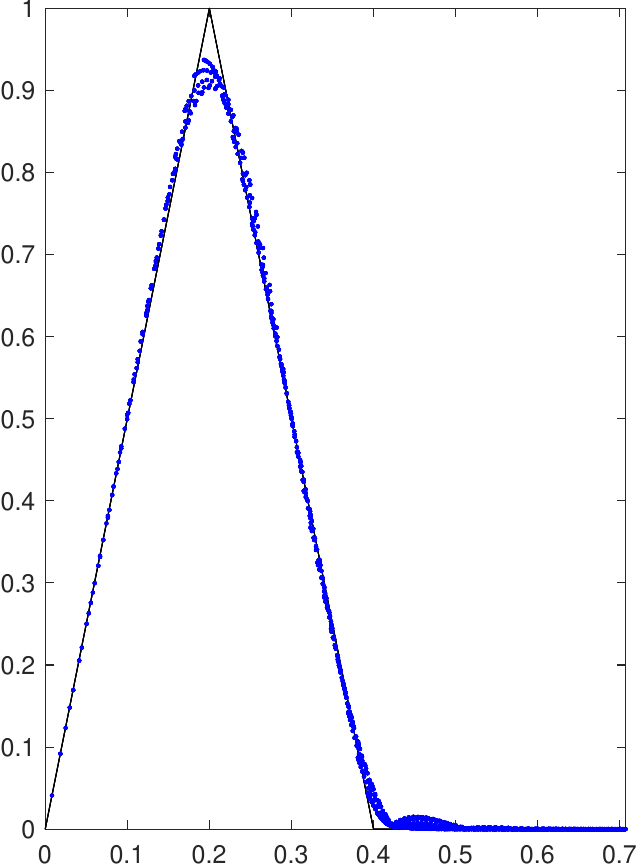}
            \includegraphics[width=0.25\textwidth]{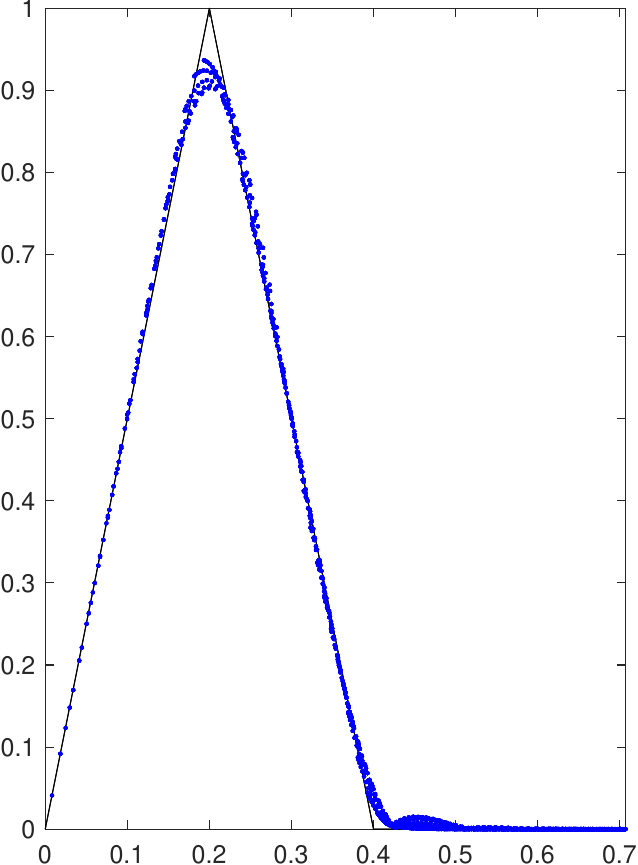}\hfil  
            \includegraphics[width=0.25\textwidth]{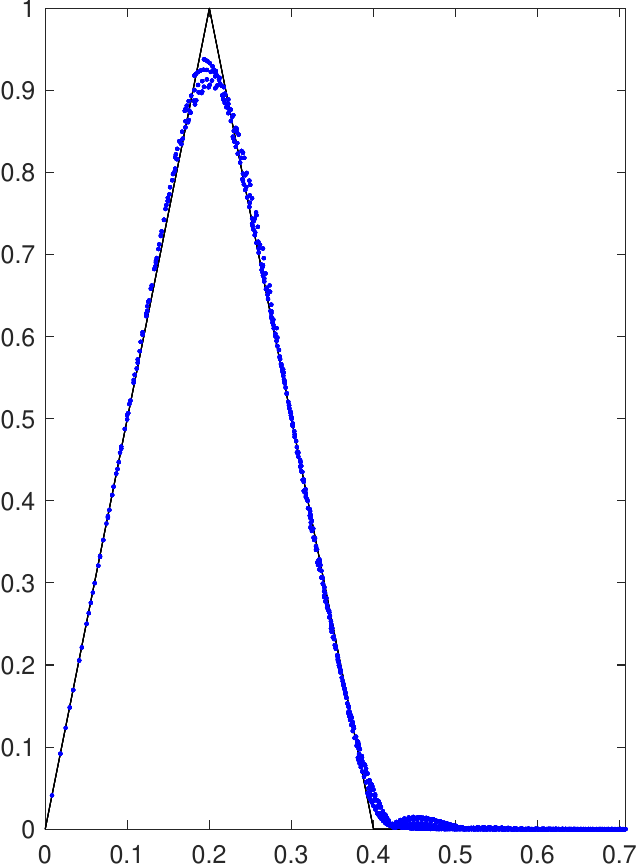}\hfil 
            \includegraphics[width=0.25\textwidth]{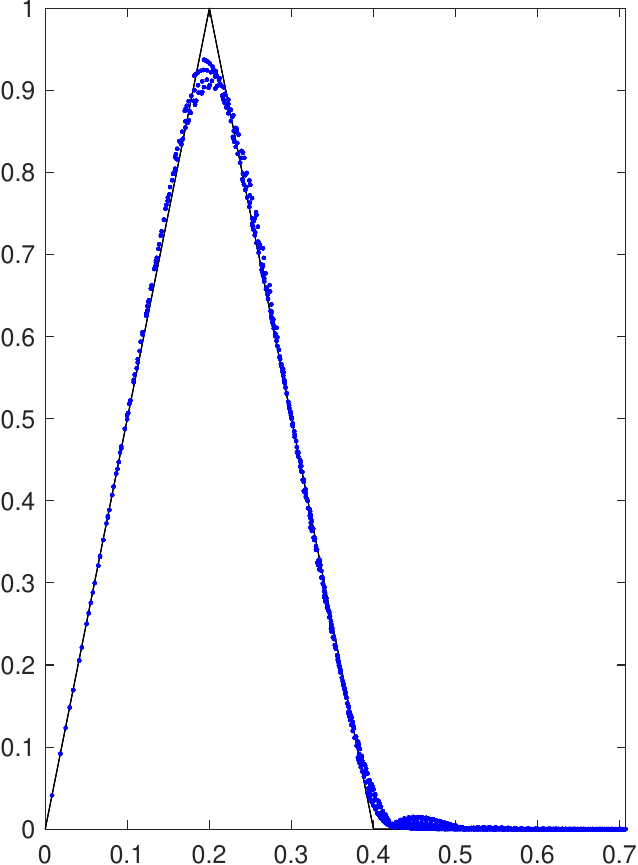}}
            
\caption{ \label{fig:appendix_Vortex}Approximation of the stationary vortex using a grid with $64\times64$ (top) and $128\times128$ (bottom) cells at $t=100$ with an AF method using EG2$_{1.0}$ (left), $\widehat{\mathrm{EG2}}_{1.0}$ (center left), EG2$_{1.0,0.2}$ (center right) and $\widehat{\mathrm{EG2}}_{1.0,0.2}$ (right).} 
\end{figure}

\subsection{Approximation of Discontinuous Solution Structure}\label{app:disc}
Figure \ref{fig:appendix_disc} shows the results for Example \ref{ex:4} at $t = 0.5$. The performance of the methods that compute the evolution formulas using numerical integration is approximately the same as that of those employing exact integration.
\begin{figure}[!htbp]
\centerline{\includegraphics[width=0.3\textwidth]{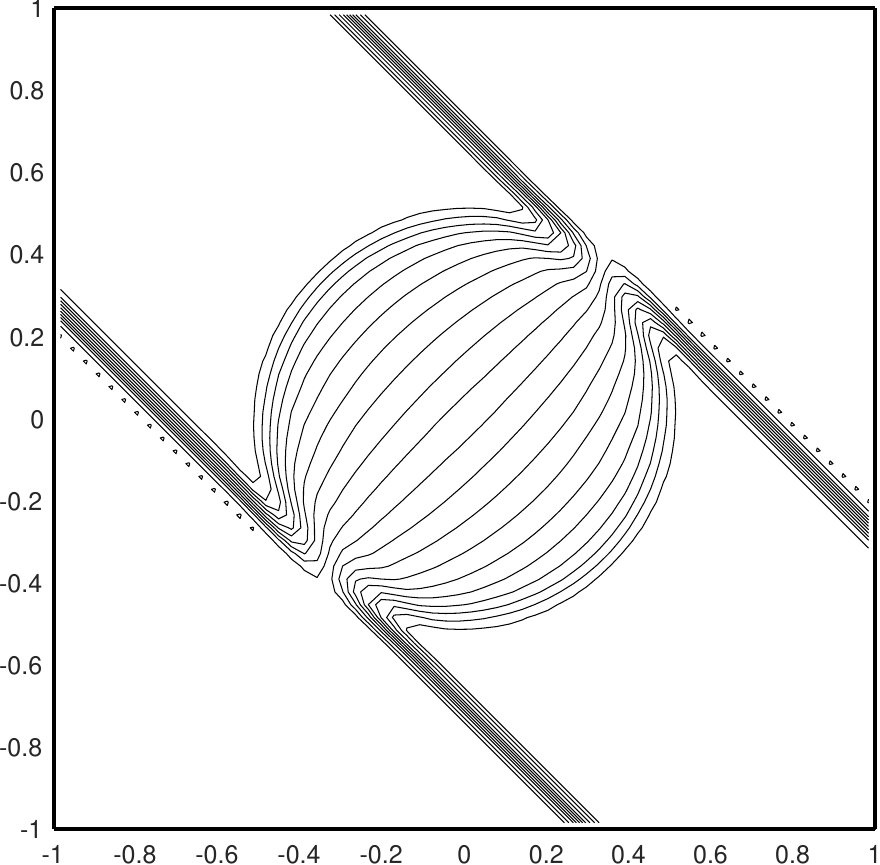}
		    \includegraphics[width=0.3\textwidth]{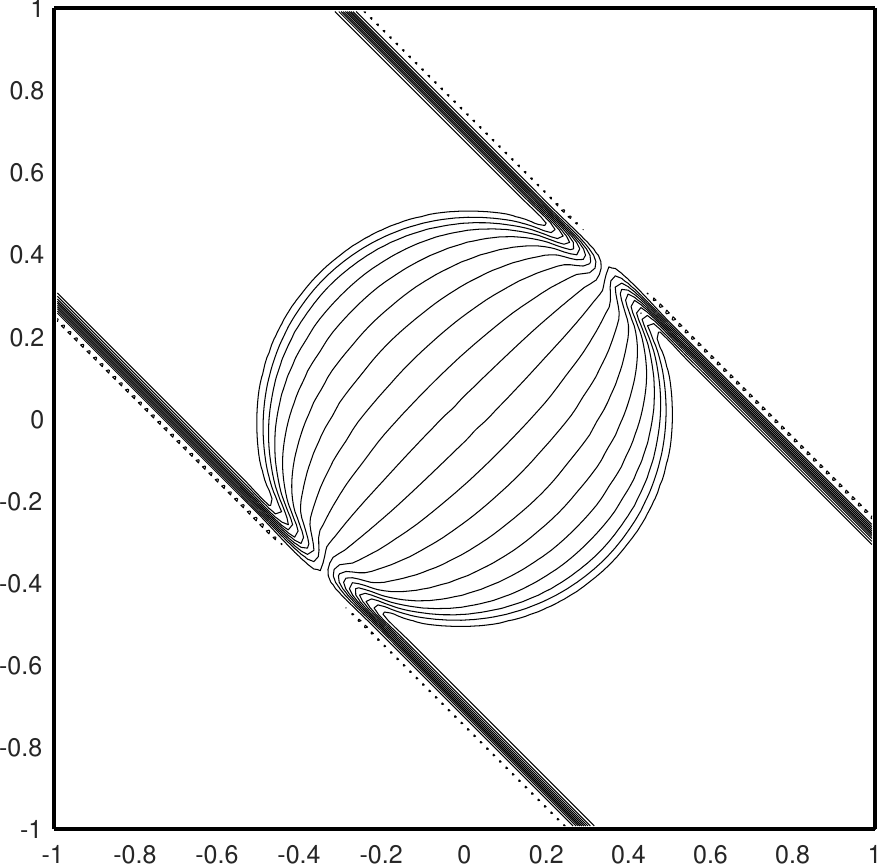}  
		    \includegraphics[width=0.3\textwidth]{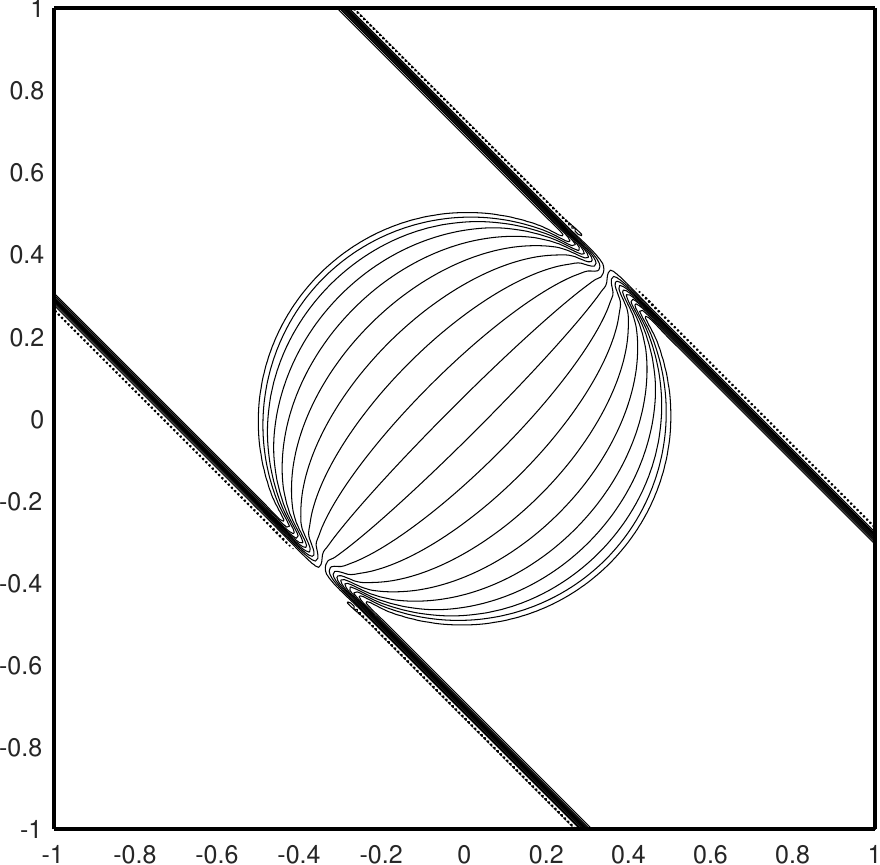}}
\centerline{\includegraphics[width=0.3\textwidth]{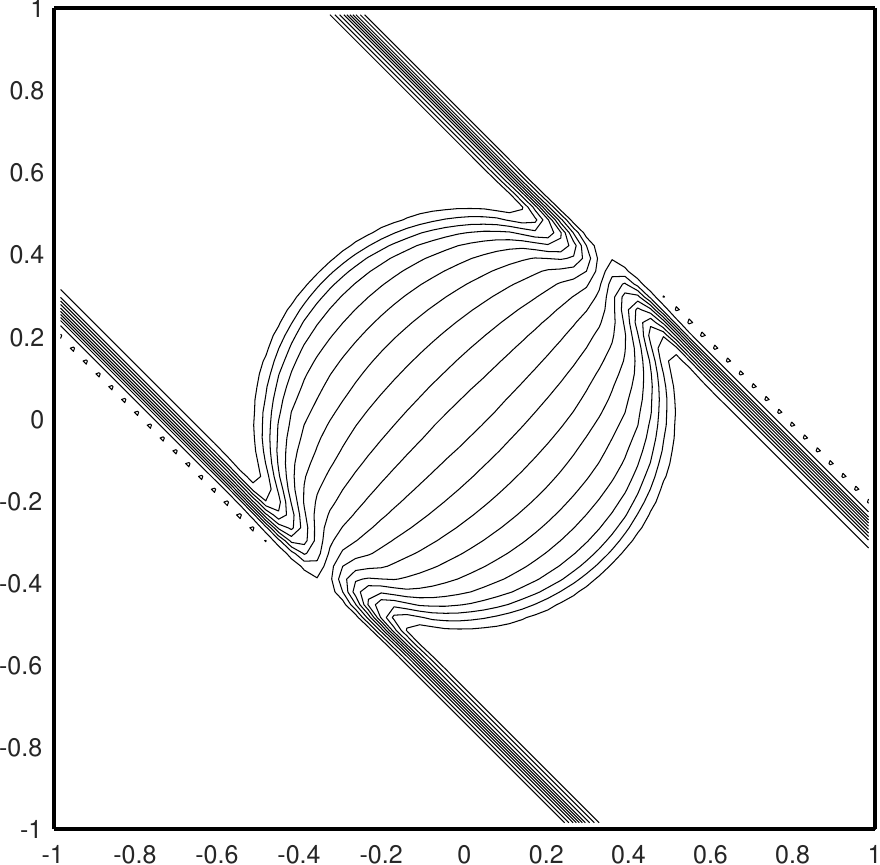}
		    \includegraphics[width=0.3\textwidth]{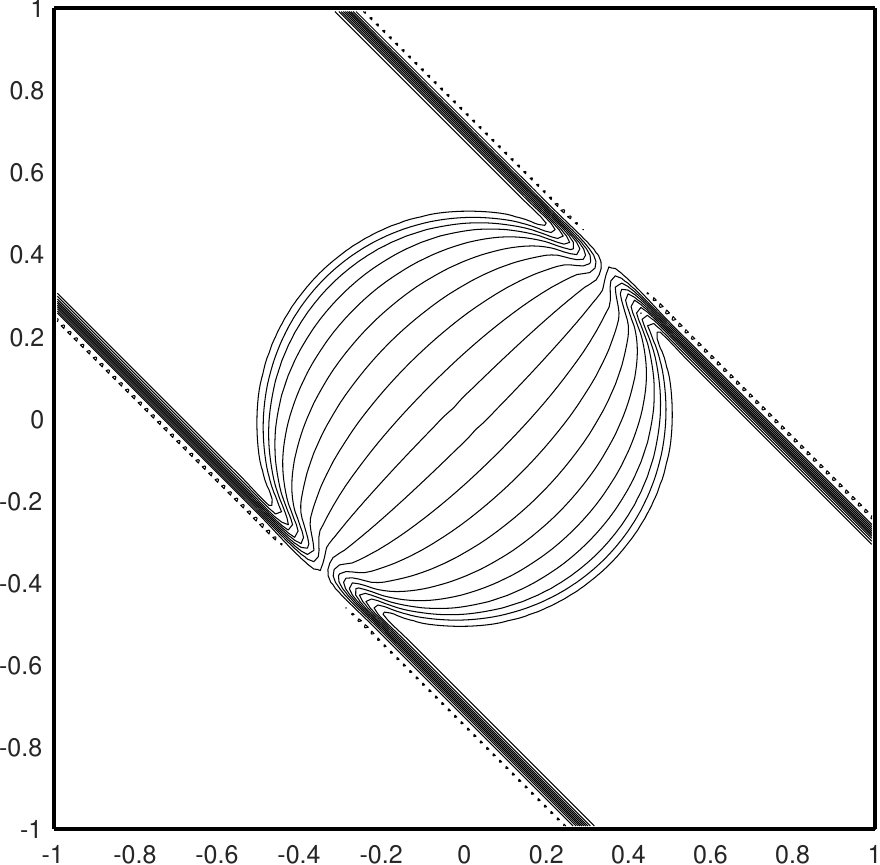}  
		    \includegraphics[width=0.3\textwidth]{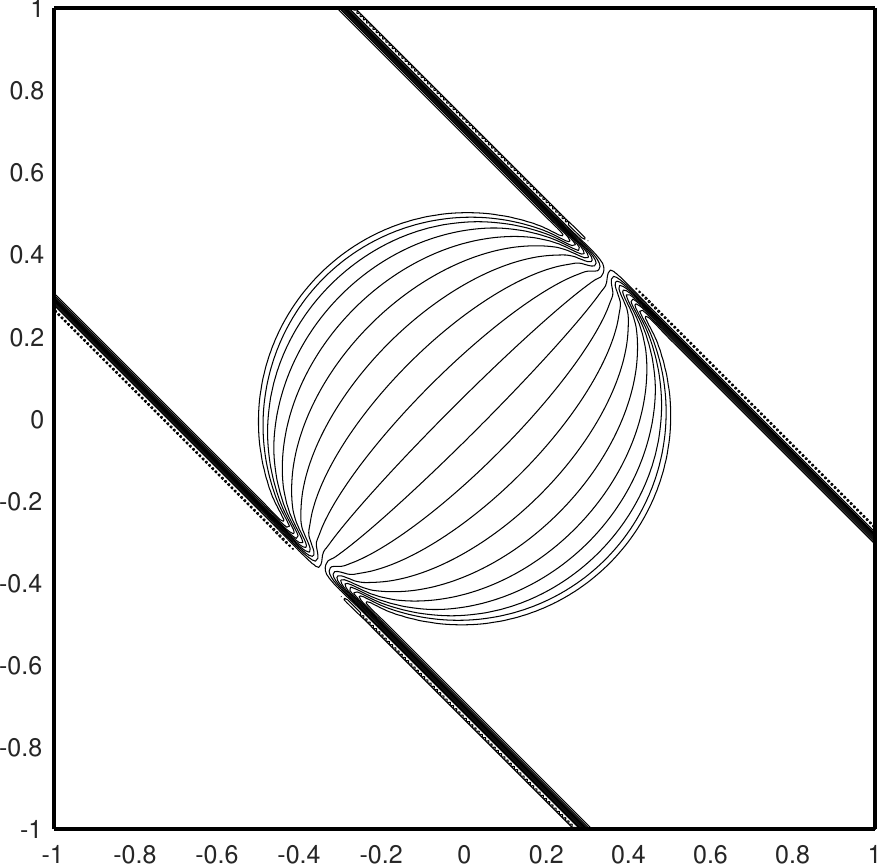}}
\centerline{\includegraphics[width=0.3\textwidth]{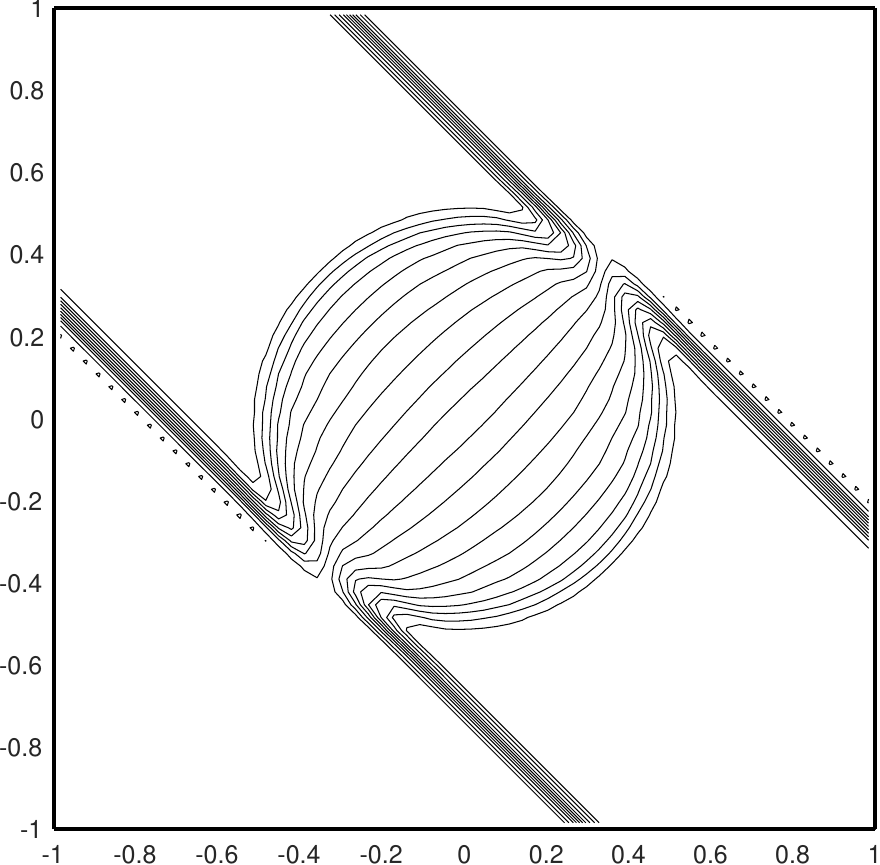}
		    \includegraphics[width=0.3\textwidth]{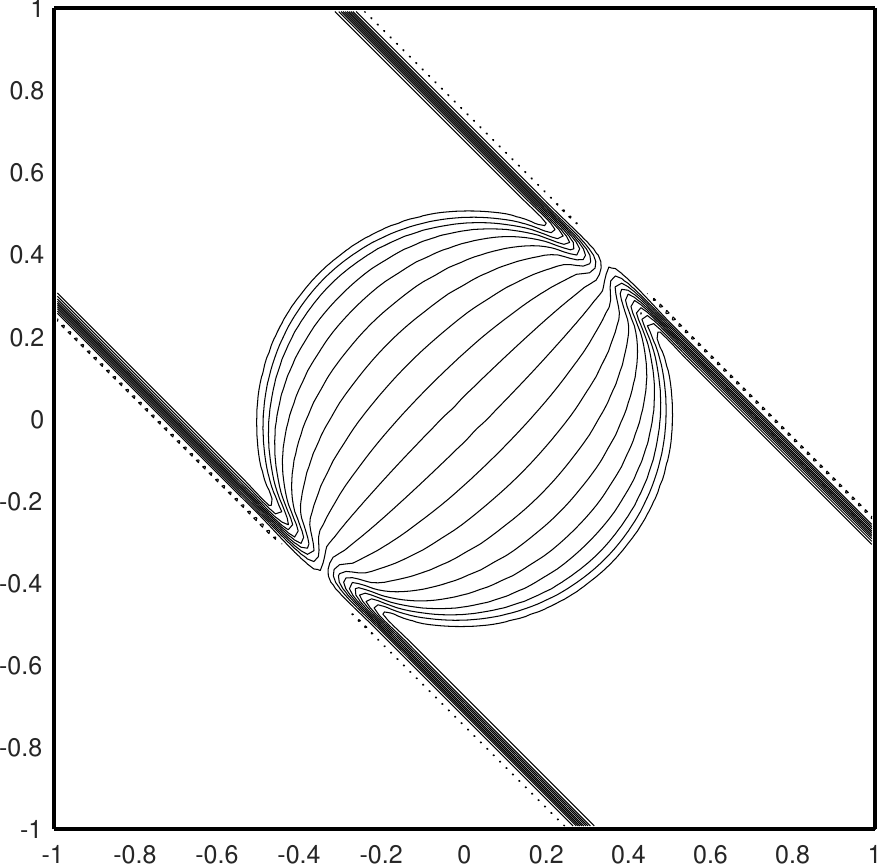}  
		    \includegraphics[width=0.3\textwidth]{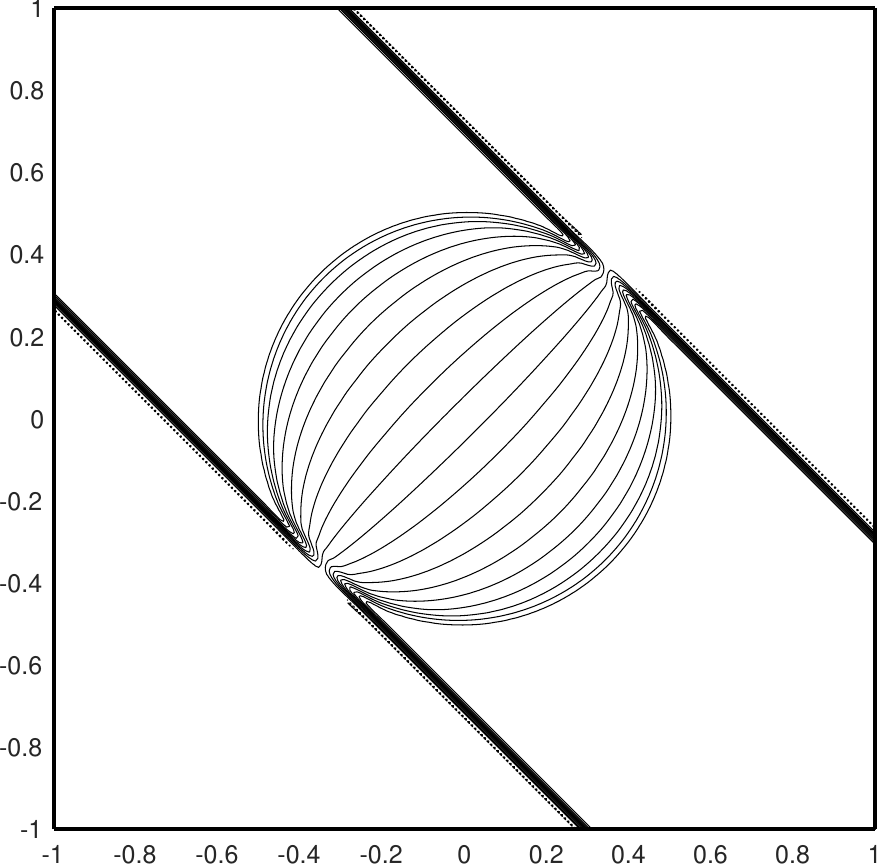}}
\centerline{\includegraphics[width=0.3\textwidth]{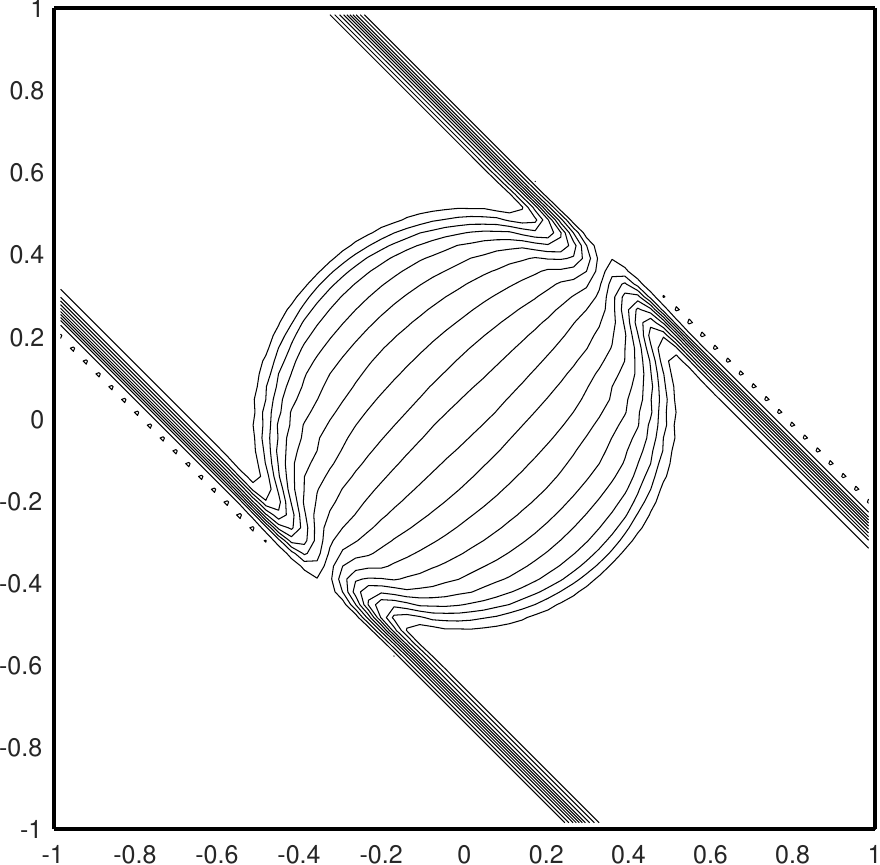}
		    \includegraphics[width=0.3\textwidth]{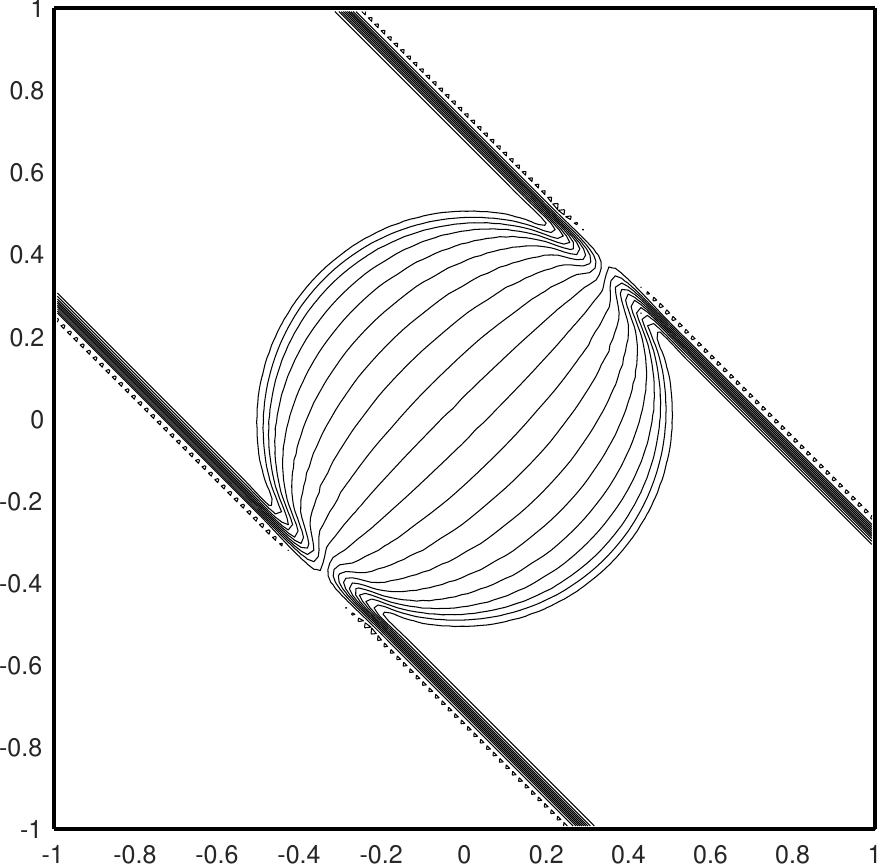}  
		    \includegraphics[width=0.3\textwidth]{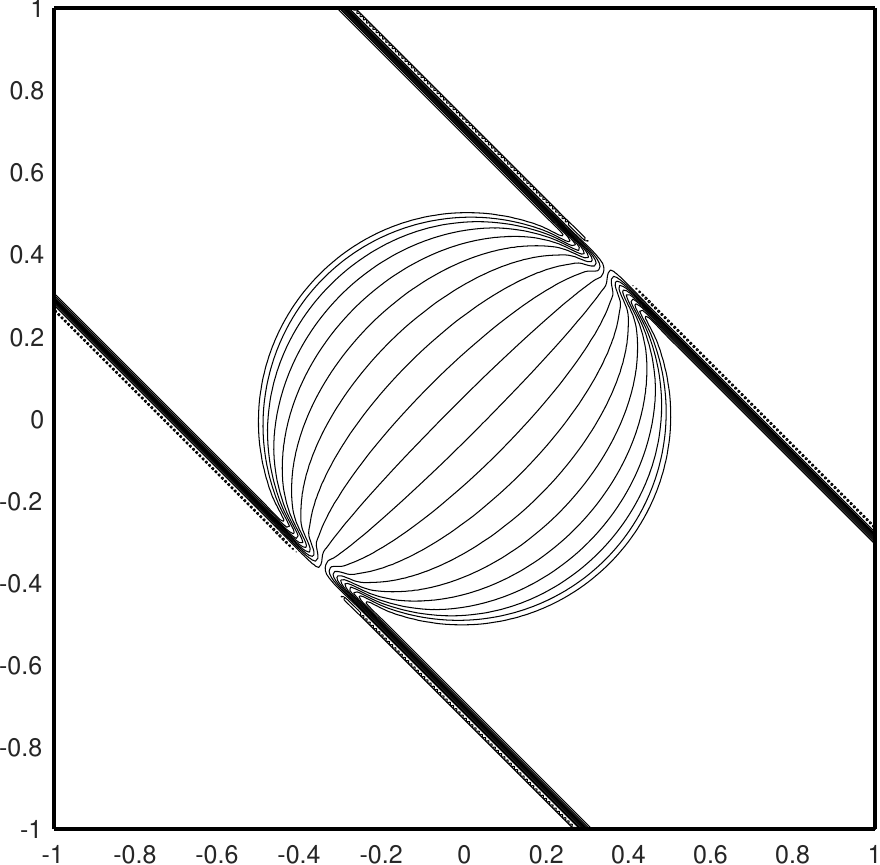}}

\caption{Approximation of discontinuous solution at $t=0.5$ on grids with $64\times 64$ (left), $128\times128$ (middle) and $256\times 256$ (right) cells using the AF method with EG2$_{1.0}$ (first row), $\widehat{\mathrm{EG2}}_{1.0}$ (second row), EG2$_{1.0,0.2}$ (third row) and $\widehat{\mathrm{EG2}}_{1.0,0.2}$  (fourth row).\label{fig:appendix_disc} } 
\end{figure}

\end{document}